\def\de{{\partial}}
\def\grad{{\nabla}}
\def\Re{{\rm Re}}
\def\d{{\rm d}}
\def\spt{{\rm spt}}
\def\eps{\varepsilon}
\def\e{{\rm e}}
\def \l {\langle}
\def \r {\rangle}
\def\ff {{\widehat f}}
\def\ggg {{\widehat g}}
\def\ddt{{\frac{\d}{\d t}}}
\def\EE {\mathbb{E}}
\def\T {{\mathbb T}}
\def\RR {\mathbb{R}}
\def\ZZ {{\mathbb Z}}
\def\NN {{\mathbb N}}
\newcommand{\Real}{\mathbb R}
\newcommand{\Torus}{\mathbb T}
\newcommand{\Complex}{\mathbb C}
\newcommand{\norm}[1]{\left\lVert#1\right\rVert}
\newcommand{\abs}[1]{\left\vert#1\right\vert}
\newcommand{\set}[1]{\left\{#1\right\}}
\newcommand{\jap}[1]{\left\langle #1 \right\rangle}
\newcommand{\brak}[1]{\left\langle #1 \right\rangle}
\newtheorem{proposition}{Proposition}[section]
\newtheorem{theorem}[proposition]{Theorem}
\newtheorem{lemma}[proposition]{Lemma}
\theoremstyle{definition}
\newtheorem{remark}[proposition]{Remark}
\numberwithin{equation}{section}
\title[Enhanced dissipation, hypoellipticity, and anomalous small noise inviscid limits in shear flows]
{Enhanced dissipation, hypoellipticity, and anomalous small noise inviscid limits in shear flows}
\author[J. Bedrossian and M. Coti Zelati]{Jacob Bedrossian and Michele Coti Zelati}
\address{Department of Mathematics, University of Maryland, College Park, MD 20742, USA}
\email{jacob@cscamm.umd.edu}
\email{micotize@umd.edu}
\subjclass[2010]{35Q35, 35H10, 37L40, 60H15, 76F10}
\keywords{Enhanced dissipation, Gevrey hypoellipticity, shear flows, hypocoercivity, Kolmogorov equations, invariant measures}
\begin{document}

\begin{abstract}
We analyze the decay and instant regularization properties of the evolution semigroups generated by
two-dimensional drift-diffusion equations in which the scalar is advected by a shear flow and dissipated 
by full or partial diffusion. 
We consider both the space-periodic $\T^2$ setting and the case of a bounded channel $\T \times [0,1]$ with no-flux boundary conditions.  
In the infinite P\'eclet number limit (diffusivity $\nu\to 0$), our work quantifies the enhanced dissipation effect due to
the shear.
We also obtain hypoelliptic regularization, showing that solutions are instantly Gevrey regular even with only partial diffusion. 
The proofs rely on localized spectral gap inequalities and ideas from hypocoercivity with an augmented energy functional
with weights replaced by pseudo-differential operators (of a rather simple form). 
As an application, we study small noise inviscid limits of invariant measures 
of stochastic perturbations of passive scalars, and show that the classical Freidlin scaling between noise and diffusion
can be modified.  
In particular, although statistically stationary
solutions blow up in $H^1$ in the limit $\nu\to0$, we show that viscous invariant measures still converge to a 
unique inviscid measure.
\end{abstract}


\maketitle

\section{Introduction and main results}

Let $u=u(y):D\to \RR$ be a smooth function, where $D$ denotes either $\T$ or $[0,1]$, and $\nu>0$ a positive parameter. We analyze the decay and instant regularization properties of the linear evolution semigroup $S_\nu(t):L^2(\T\times D)\to L^2(\T\times D)$ generated by the drift-diffusion scalar
equation
\begin{equation}\label{eq:passive}
\begin{cases}
\partial_t f + u \partial_x f = \nu \Delta f, \quad &(x,y)\in \T\times D,\ t>0,\\
f(0,x,y)=f_{in}(x,y),\quad &(x,y)\in \T\times D,
\end{cases}
\end{equation}
and of its \emph{hypoelliptic} counterpart $R_\nu(t):L^2(\T\times D)\to L^2(\T\times D)$, generated by
\begin{equation}\label{eq:hypopassive}
\begin{cases}
\partial_t f + u \partial_x f = \nu \de_{yy} f, \quad &(x,y)\in \T\times D,\ t>0,\\
f(0,x,y)=f_{in}(x,y),\quad &(x,y)\in \T\times D.
\end{cases}
\end{equation}
In the case $D=[0,1]$, equations \eqref{eq:passive}-\eqref{eq:hypopassive} are equipped with the usual no-flux
boundary conditions
\begin{equation}\label{eq:chanpassiveBC}
\de_yf(t,x,0)=\de_yf(t,x,1)=0, \qquad \forall x\in \T,\quad t\geq 0.
\end{equation}
Problem \eqref{eq:passive} belongs to the general class of so-called \emph{passive scalar} 
equations 
$$
\de_tf+\boldsymbol{u}\cdot\grad f=\nu \Delta f,
$$
in the special case when the velocity vector field is a \emph{shear flow}, namely $\boldsymbol{u}(x,y)=(u(y),0)$. 
In this context (after appropriate non-dimensionalization), $\nu^{-1}$ is the P\'eclet number; the dynamics of passive scalars at high P\'eclet number 
is a classical and important topic in applied mathematics and physics; see e.g. \cites{RhinesYoung83,Lundgren82,Bajer2001,LTD11,AT11,S13,IKX14} and the references therein. 
Hypoelliptic problems related to \eqref{eq:hypopassive} arise in the study of boundary layers \cite{LiDiXu15} and in 
kinetic theory \cites{AlexandreEtAl2010, Chen2009320, ChenDesLing2009}.

We assume that $u \in C^{n_0+2}(D)$ has a \emph{finite} number
of critical points, denoted by $\bar y_1,\ldots, \bar y_N$, and where  $n_0\in \NN$ 
denotes the maximal order of vanishing of $u'$ at the critical points, namely, the minimal integer
such that
\begin{equation}\label{eq:last}
u^{(n_0+1)}(\bar y_i)\neq 0, \qquad \forall i=1,\ldots N.
\end{equation}
Finally, we assume without loss of generality that $\int_D u(y) \d y = 0$. 
Notice that in the case of $D=\T$, we necessarily have $n_0\geq 1$, while when $D=[0,1]$ we may consider monotone shear flows.
We first discuss the space-periodic scenario, in which our main result reads as follows.

\begin{theorem}[Periodic $(D=\T)$ case] \label{thm:maindecay}
There exist positive constants $\eps\ll1$, $C\geq 1$ and $\kappa_0\ll1$ (depending only on $u$) such that 
the following holds: for every $\nu>0$ and every integer $k\neq 0$ such that 
\begin{equation}\label{eq:kappa}
\nu |k|^{-1}\leq \kappa_0,
\end{equation}
there hold the $L^2$ decay estimates 
\begin{align}
\norm{S_\nu(t)P_k}_{L^2 \rightarrow L^2} & \leq C\e^{-\eps\lambda_{\nu,k} t}, \qquad \forall t\geq 0,\label{eq:maindecay}
\end{align}
and
\begin{align}
\norm{R_\nu(t)P_k}_{L^2 \rightarrow L^2} & \leq C\e^{-\eps\lambda_{\nu,k} t}, \qquad \forall t\geq 0, \label{eq:hypomaindecay}
\end{align}
where $P_k$ denotes the projection to the $k$-th Fourier mode in $x$ and 
\begin{align}\label{eq:decayrate}
\lambda_{\nu,k} = \frac{\nu^{\frac{n_0+1}{n_0+3}}\abs{k}^{\frac{2}{n_0+3}}}{(1 + \log \abs{k} + \log \nu^{-1})^2}
\end{align}
is the decay rate.
\end{theorem}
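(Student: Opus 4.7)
After Fourier decomposition $f=\sum_k f_k(t,y)\e^{ikx}$ in $x$, both $S_\nu$ and $R_\nu$ act mode-by-mode, so it suffices to prove that for every $k\neq 0$ satisfying \eqref{eq:kappa} the $1$D evolutions
\[
\de_t f_k + iku(y)f_k = \nu(\de_{yy} - k^2)f_k \quad (\text{resp. } \nu\de_{yy}f_k),
\]
with Neumann conditions \eqref{eq:chanpassiveBC} when $D=[0,1]$, obey $\norm{f_k(t)}_{L^2(D)}\leq C\e^{-\eps\lambda_{\nu,k}t}\norm{f_k(0)}_{L^2(D)}$. I would proceed in the spirit of Villani-style hypocoercivity, constructing an augmented energy functional
\[
\Phi_k(f) = \norm{f}_{L^2}^2 + \alpha\norm{Af}_{L^2}^2 + 2\beta\,\Re \l Af, Bf\r + \gamma \norm{Bf}_{L^2}^2,
\]
with operators $A,B$ to be designed and positive constants $\alpha,\beta,\gamma$ (depending on $\nu,k$) so that $\Phi_k\asymp\norm{f}_{L^2}^2$, and aim for $\ddt \Phi_k \leq -2\eps\lambda_{\nu,k}\Phi_k$; Gronwall then closes the argument.

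The naive choice $A\sim\de_y$, $B\sim iku'(y)$ produces, via $[\de_{yy},iku]=2iku'\de_y+iku''$, a coercive contribution $-k^2\norm{u'f}_{L^2}^2$ in the time derivative of the cross term. Paired with the diffusion $\nu\norm{\de_yf}_{L^2}^2$, this suffices in the monotone case ($n_0=0$) to give the classical $\nu^{1/3}|k|^{2/3}$ rate through the Poincar\'e-type bound $\nu\norm{\de_yf}^2+k^2\norm{u'f}^2\gtrsim (\nu k^2)^{1/3}\norm{f}^2$. What makes Theorem \ref{thm:maindecay} harder is the vanishing of $u'$ to order $n_0$ at the $\bar y_i$, which destroys the global coercivity of $\norm{u'f}^2$. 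My remedy would be to upgrade $A$ and $B$ to pseudo-differential operators whose symbols interpolate between the transport-dominated regime $|y-\bar y_i|\gtrsim \rho$ (where $|u'|\sim|y-\bar y_i|^{n_0}$) and the diffusion-dominated regime $|y-\bar y_i|\lesssim \rho$, with heuristic scale $\rho = (\nu/|k|)^{1/(n_0+3)}$ fixed by balancing the transport decorrelation time $(|k|\rho^{n_0+1})^{-1}$ against the diffusive time $\rho^2/\nu$. A natural candidate is a symbol built from the regularized effective drift $|u'(y)|^2 + \rho^{2n_0}$, lifted through a simple Fourier multiplier in $y$. A partition of unity around the critical points, combined with a \emph{localized} spectral gap inequality on each patch, should then deliver the core enhanced-dissipation estimate
\[
\nu\norm{\de_yf}_{L^2}^2 + |k|^2\norm{u'f}_{L^2}^2 \gtrsim \lambda_{\nu,k}\norm{f}_{L^2}^2
\]
modulo remainders absorbed by the pseudo-differential pieces of $\Phi_k$.

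The main obstacle will be handling the hypoelliptic case $R_\nu$ uniformly with the elliptic one: in the absence of the $-\nu k^2\norm{f}_{L^2}^2$ damping, every commutator error carrying a factor of $k$ must be absorbed by transport rather than dissipation, so there is essentially no slack. This is what forces $A$ and $B$ to be \emph{genuine} pseudo-differential operators commuting sharply with $\de_{yy}$, as opposed to plain multipliers in $y$; the ``rather simple form'' advertised in the abstract presumably means that the symbols still only depend on $|u'(y)|$ and $|\eta|$. I expect the logarithmic loss $(1+\log|k|+\log\nu^{-1})^2$ in \eqref{eq:decayrate} to emerge from a dyadic decomposition around each critical point: the number of dyadic scales between $\rho$ and $O(1)$ is $O(\log(|k|/\nu))$, and each one contributes both to the coercivity deficit and to ensuring positivity of $\Phi_k$, which would account for the square. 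Once $\ddt\Phi_k \leq -2\eps\lambda_{\nu,k}\Phi_k$ is in hand, the equivalence $\Phi_k\asymp\norm{f_k}_{L^2}^2$ and Gronwall deliver \eqref{eq:maindecay} and \eqref{eq:hypomaindecay} simultaneously.
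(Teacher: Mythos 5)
Your general scheme (Fourier decomposition in $x$, an augmented energy functional $\Phi_k$ built from commutators, a partition of unity around the critical points of $u$, and a localized spectral-gap inequality tuned to the scale $\rho=(\nu/|k|)^{1/(n_0+3)}$) is the right skeleton and closely mirrors Sections 2--3 of the paper. However, there are two structural gaps in the final step that you cannot repair within the framework you propose.

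First, the claimed equivalence $\Phi_k \asymp \norm{f}_{L^2}^2$ cannot hold. Any hypocoercive functional of the form you write necessarily contains a term of order $\alpha\norm{\de_y f}^2$, which is unbounded on $L^2$. The paper's functional satisfies $\Phi - \norm{\ff}^2 \gtrsim (\nu/|k|)^{2/3}\norm{\de_y\ff}^2 + \norm{u'\ff}^2$ and $\Phi - \norm{\ff}^2 \lesssim \norm{\de_y\ff}^2 + (\nu/|k|)^{-2n_0/(n_0+3)}\norm{u'\ff}^2$, i.e.\ $\Phi$ is bounded \emph{below} by $\norm{\ff}^2$ but only \emph{above} by a $\nu,k$-weighted $H^1$ norm. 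Consequently, the differential inequality $\ddt\Phi \le -\widetilde\eps\,\widetilde\lambda_{\nu,k}\Phi$ and Gronwall deliver only an $H^1\to L^2$ smoothing estimate, not the $L^2\to L^2$ bound \eqref{eq:maindecay}. The passage from the former to the latter is where the paper spends its Section 4: one first uses the instantaneous $L^2\to H^1$ parabolic regularization on a short time interval $\sim (\nu/|k|)^{1/2}$, feeds the result into the $\Phi$ decay to estimate $\int_0^\infty \norm{\e^{-tL_{\nu,k}}}\,\d t$, interprets this as a lower bound on the pseudo-spectral distance $\Psi(L_{\nu,k})$ to the imaginary axis, and finally invokes a resolvent-to-semigroup lemma for sectorial maximal accretive operators (Gallagher--Gallay--Nier). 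Without this detour, your Gronwall step proves a different, weaker theorem.

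Second, and related, your heuristic for the logarithmic loss $(1+\log|k|+\log\nu^{-1})^2$ is incorrect. It does not arise from a dyadic decomposition: the partition of unity in the paper has a \emph{fixed, finite} number of pieces (one per distinct vanishing order of $u'$, plus one away from the critical points). The logarithms come from two sources in the Section~4 argument: (i) the operators $L_{k,\nu}$, $R_{k,\nu}$ are not uniformly sectorial as $\nu|k|^{-1}\to 0$ (the angle of the numerical-range sector degenerates like $\nu|k|^{\pm1}$), so the constant in the Gallagher--Gallay--Nier semigroup estimate grows; and (ii) converting the $H^1\to L^2$ rate into a pseudo-spectral bound for $\Psi$ incurs a factor $(1+\log(\nu^{-r}|k|^p))$ from integrating $\min(1, C\nu^{-r}|k|^p\e^{-\widetilde\lambda t})$ in time. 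Relatedly, your belief that one is ``forced'' into genuine pseudo-differential operators in $y$ commuting sharply with $\de_{yy}$ is a misreading of the abstract: the paper's weights $\alpha,\beta,\gamma$ are ordinary multiplication operators in $y$ with $k,\nu$-dependent constants (hence ``$y$-dependent Fourier multipliers in $x$''); they do \emph{not} commute with $\de_{yy}$, and the commutator terms they generate are exactly the technical errors of Section~3.3, controlled via the partition of unity and the estimate $|\phi_j'|\lesssim_\varsigma|\phi_j|^{1-\varsigma}$.
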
  

\begin{remark}[Hypoellipticity] \label{rmk:Hypo}
Notice that since the smallness condition \eqref{eq:kappa} is only on $\nu \abs{k}^{-1}$, Theorem \ref{thm:maindecay} is still quite meaningful in the case $\nu = 1$, at least when applied to $R_\nu(t)$ and considering the asymptotic $|k|\to \infty$.
Specifically, the estimate \eqref{eq:hypomaindecay} (along with the standard parabolic smoothing in $y$) shows that data which are only initially $L^2$ become instantly infinitely smooth, and even Gevrey-$p$ regular for all $p > \frac{n_0+ 3}{2}$, where Gevrey-$p$, $\mathcal{G}^{p}$ with $p \geq 1$, is defined via (introduced in \cite{Gevrey18}),   
\begin{align*}
\mathcal{G}^p = \bigcup_{\lambda > 0}\set{f \in L^2 : \norm{\e^{\lambda \abs{\grad}^{1/p}} f}_{L^2} < \infty}. 
\end{align*}
Additionally, \eqref{eq:hypomaindecay} provides a quantitative estimate of the Gevrey regularity in $x$. 
We conjecture that \eqref{eq:hypomaindecay} is sharp in terms of \emph{relative} scaling between $k$ and $t$,
up to the logarithmic corrections. For instance, the optimal Sobolev regularization estimate for the solution $f$ 
of \eqref{eq:hypopassive} would read
$$
\|\de_x^\alpha f(t)\|_{L^2}\leq \frac{C}{t^{\frac{\alpha(n_0+3)}{2}}} \| f_{in}\|_{L^2}.
$$
Moreover, the scaling in $|k|$ relative to $t$ is in agreement with the existing known cases $u(y)=y$ or $u(y)=y^2$ (see \cites{Kelvin87,BZ09, Beauchard2014}) and is consistent with the mixing time-scales deduced in Appendix \ref{app:mixing} for the
inviscid ($\nu=0$) counterpart of \eqref{eq:hypopassive}.

This instant smoothing effect, despite not having any dissipative mechanisms in $x$, is an example of hypoellipticity \cites{Hormander67,Hormander1985III}. 
Finally, we remark that Gevrey class hypoelliptic smoothing has been identified in the Prandtl equations \cite{LiDiXu15} and in collisional kinetic theory for the Boltzmann (without angular cut-off) \cite{AlexandreEtAl2010} and the Landau-Fokker-Planck equations \cite{ChenDesLing2009}.  
\end{remark}

Let us first discuss the intuition behind the scaling of $\lambda_{\nu,k}$ in \eqref{eq:decayrate}. 
The inviscid problem 
$$
\partial_t f + u\partial_x f = 0,
$$ 
can be shown to satisfy the following $H^{-1}$ decay estimate in the $y$-variable via the method of stationary phase (see Appendix \ref{app:mixing}), 
\begin{align}
\norm{P_k f(t)}_{H^{-1}_y} \lesssim \jap{kt}^{-\frac{1}{n_0 + 1}} \norm{P_k f(0)}_{H^1_y}. \label{ineq:invdec}
\end{align}
The $H^{-1}$ norm is often used to quantify mixing \cites{LTD11,IKX14} as it provides a kind of 
averaged measure of the characteristic length-scale of the solution. 
In particular, the estimate \eqref{ineq:invdec} suggests that $P_kf(y)$ is concentrated in 
frequencies $\eta$ which satisfy $\abs{\eta} \gtrsim \jap{kt}^{\frac{1}{n_0+1}}$. 
For small $\nu$, we expect the inviscid mixing to be the leading order dynamics (at least 
for some time), and hence we can expect the dissipation $\nu \partial_{yy}$ 
to behave on the frequency side like $-\nu \jap{kt}^{\frac{2}{n_0 + 1}} \ff$ damping. Upon 
integration, this predicts a time-scale in agreement with \eqref{eq:decayrate} 
up to the logarithms. 
Notice that the hypoelliptic regularization discussed in Remark \ref{rmk:Hypo} is due to the 
fact that the decay rate in \eqref{ineq:invdec} is faster for large $k$.  

Theorem \ref{thm:maindecay} above describes in a precise manner the so-called \emph{enhanced dissipation} effects caused by the shear flow. 
While the natural dissipation time-scale is that of 
the heat equation, i.e. $O(\nu^{-1})$, the mixing due to the shear flow induces a faster time-scale $O(\nu^{-p})$, 
with $p=p(n_0)\in (0,1)$ as in \eqref{eq:decayrate}, at which the $L^2$ density is dissipated. 
This effect in linear equations has been studied previously in \cites{CKRZ08,Zlatos2010,BeckWayne11,VukadinovicEtAl2015,BCZGH15}, in the physics literature \cites{Lundgren82,RhinesYoung83,DubrulleNazarenko94,LatiniBernoff01,BernoffLingevitch94}, and in control theory \cites{BZ09, Beauchard2014}; a closely related effect was also studied in \cite{GallagherGallayNier2009}. 
The effect has turned out also to be important for understanding stability of the Couette flow in the Navier-Stokes equations at high Reynolds number \cites{BMV14,BGM15I,BGM15II}.  
A number of works also focus on estimating the spectrum of the elliptic operator $\boldsymbol{u} \cdot \grad - \nu \Delta$, in particular, see \cites{BHN05,VukadinovicEtAl2015} and  the references therein. However, as the operator is non-normal, precise estimates on the spectrum do not necessarily yield optimal $L^2$ decay estimates of the semigroup in the singular limit $\nu \rightarrow 0$, indeed, one needs estimates on the \emph{pseudo-spectrum}; see Section \ref{sec:L2decay} and \cites{Trefethen2005,GallagherGallayNier2009} for more discussion.
Nonetheless, the scaling of the spectral gap as a power of $\nu$ observed in \cite{VukadinovicEtAl2015} is consistent with Theorem \ref{thm:maindecay} (up to the logarithms). 
In our work, we estimate the pseudo-spectrum using a hypocoercivity method related to the methods of \cites{GallagherGallayNier2009, BeckWayne11} (see below for more discussion). 
For the mixing of passive scalars by shear flows at high Peclet number ($\nu \rightarrow 0$), Theorem \ref{thm:maindecay} provides the most precise, quantitative estimates to date for a general class of shears. 
We conjecture that the rate \eqref{eq:decayrate} is sharp upon removal of the logarithmic losses (see Section \ref{sec:L2decay}). 

In the case $u(y)=y$, the enhanced dissipation effect for \eqref{eq:passive}  was first deduced by
Kelvin \cite{Kelvin87}, whereas the hypoellipticity of \eqref{eq:hypopassive} was first considered by 
Kolmogorov \cite{Kolmogorov34}. 
For more general flows $u$, it is easy to check that equation \eqref{eq:hypopassive}
satisfies H\"ormander's bracket condition, provided the conditions of Theorem \ref{thm:maindecay} on $u$ are satisfied (that is, finitely many critical points and all vanish only to finite order), and hence \eqref{eq:hypopassive} falls under H\"ormander's general theory of hypoelliptic operators \cites{Hormander67, Hormander1985III}. 
We provide more quantitative Gevrey hypoellipticity as well as the long-time enhanced dissipation estimate, indeed, our work suggests that the two concepts are intimately connected and are here  deduced simultaneously. 

The proof of Theorem \ref{thm:maindecay} relies on ideas connected with the hypocoercivity of the linear drift-diffusion operator appearing in \eqref{eq:passive}-\eqref{eq:hypopassive}.
Hypocoercivity originated in kinetic theory to study the long-time behavior of collisional models, see e.g. \cites{DesvillettesVillani01,DMS15,HerauNier2004,Herau2007} and the references therein or the text \cite{villani2009} for an overview of the basic ideas and a general study of hypocoercivity applied to linear operators in H\"ormander form.      
These techniques are centered around finding an augmented energy function which is essentially a weighted sum of commutators of the symmetric and skew-symmetric terms in the linear operator.  
The operator $u\de_x-\nu\Delta$ constitutes a very particular case of the general concepts \cite{villani2009}, however, it requires significant additional care to study the singular limits $\nu \to 0$ and $\abs{k} \rightarrow \infty$ which allow to identify enhanced dissipation and hypoellipticity. 
The work \cite{GallagherGallayNier2009} studies the spectral and pseudo-spectral properties (see \cite{Trefethen2005} or Section \ref{sec:L2decay}) of the harmonic oscillator with an additional large, complex potential.   
Therein the authors demonstrated the potential usefulness of hypocoercivity in studying such limits as well as introducing the idea of using weighted norms adapted to features in the skew-symmetric operator.
The authors of \cite{BeckWayne11} expanded on some of the ideas from \cite{GallagherGallayNier2009} to study enhanced dissipation near the particular, time-varying shear flow $u(t,y) = \e^{-\nu t}\sin \pi y$ (and so they do not obtain a semigroup decay estimate as in Theorem \ref{thm:maindecay}). 
Therein, the authors introduced the idea of making the weighted sums in the augmented energy functional $k$-dependent, effectively using Fourier multipliers instead of constants as weights, although a careful study of the limit $k \rightarrow \infty$ and Gevrey regularization is not explicit. 
Some similar ideas also appeared in \cite{Beauchard2014}, however, there the authors restricted attention to the much simpler cases of $u(y) = y$ or $u(y) = y^2$.  

In our work, we use an augmented energy functional with coefficients that are pseudo-differential operators, though of the simple form of $y$-dependent Fourier multipliers in $x$ (see Section \ref{sec:abgdef}). 
This is essential to deal carefully with the critical points of the shear flow or the presence of boundaries in Theorem \ref{thm:channeldecay} below. 
Notice that after taking a Fourier transform in the $x$-direction, the problem decouples in frequency and the decay estimate \eqref{eq:maindecay} is $k$-by-$k$. 
This does not only make the estimate more precise -- it seems to be unavoidable.
With some extra precision, we are able to treat the degenerate drift-diffusion operator $u\de_x-\nu\de_{yy}$ as a by-product of our analysis and obtain the hypoellipticity discussed in Remark \ref{rmk:Hypo}. 
The connection between hypocoercivity and hypoellipticity is the origin of the name of the former \cite{villani2009}, however, Theorem \ref{thm:maindecay} and the proof emphasize that in certain singular limits, they are mathematically \emph{equivalent} and together give rise to the enhanced dissipation phenomenon. 

Our work also applies to the case of the bounded channel $\Torus \times [0,1]$ with no-flux boundary conditions \eqref{eq:chanpassiveBC} imposed. 
In particular, we deduce the following theorem. 
 
\begin{theorem}[Channel case]\label{thm:channeldecay}
There exist positive constants $\eps\ll1$, $C\geq 1$ and $\kappa_0\ll1$ (depending only on $u$) such that 
the following holds: for every $\nu>0$ and every integer $k\neq 0$ such that 
\begin{equation}\label{eq:kappabdy}
\nu |k|^{-1}\leq \kappa_0,
\end{equation}
there hold the $L^2$ decay estimates 
\begin{align}
\norm{S_\nu(t)P_k}_{L^2 \rightarrow L^2} & \leq C\e^{-\eps\lambda_{\nu,k} t}, \label{eq:maindecaybdy}
\end{align}
and
\begin{align}
\norm{R_\nu(t)P_k}_{L^2 \rightarrow L^2} & \leq C\e^{-\eps\lambda_{\nu,k} t}, \label{eq:hypomaindecaybdy}
\end{align}
where $P_k$ denotes the projection to the $k$-th Fourier mode in $x$ and 
\begin{align*}
\lambda_{\nu,k} = \frac{\nu^{\frac{n_c+1}{n_c+3}}\abs{k}^{\frac{2}{n_c+3}}}{(1 + \log \abs{k} + \log \nu^{-1})^2}, 
\end{align*}
where  $n_c=\max\{n_0,1\}$.
\end{theorem}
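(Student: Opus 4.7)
The overall plan is to mirror the proof of Theorem \ref{thm:maindecay} in the channel setting, working mode-by-mode after Fourier decomposition in $x$ and constructing an augmented energy functional whose coefficients are $y$-dependent Fourier multipliers adapted to the geometry of $u$ and to the boundary. For each fixed integer $k\neq 0$ satisfying \eqref{eq:kappabdy}, the evolution under either $S_\nu(t)P_k$ or $R_\nu(t)P_k$ reduces to studying the one-dimensional operator $H_{k,\nu} = iku(y) - \nu \partial_{yy}$ (or $-\nu \partial_{yy}$ replaced by its $\Delta$-version in the non-hypoelliptic case), acting on $L^2([0,1])$ with domain encoding the no-flux condition \eqref{eq:chanpassiveBC}. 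The energy functional would be of the hypocoercive type
\begin{equation*}
\Phi_k(t) = \norm{f}_{L^2}^2 + \alpha \norm{A_k f}_{L^2}^2 + \beta\, \Re \langle B_k f, \partial_y f\rangle + \gamma \norm{C_k \partial_y f}_{L^2}^2,
\end{equation*}
where $A_k, B_k, C_k$ are multiplication operators (depending on $u$, $k$, $\nu$) whose profiles are designed so that the cross term extracted from the commutator $[\partial_y, iku] = iku'$ produces enough coercivity to absorb the remaining error terms and to dominate $\lambda_{\nu,k} \Phi_k$.

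The key new ingredient, compared to Theorem \ref{thm:maindecay}, is that even when $u$ is monotone (so $n_0 = 0$ in the interior) the no-flux boundary condition prevents the natural cutoff multipliers from vanishing at $y=0,1$, and one is forced to treat the boundary as a region where mixing proceeds at the same rate as near a non-degenerate critical point. This is precisely why $n_c = \max\{n_0,1\}$ appears. Accordingly, I would decompose $[0,1]$ into (i) a bulk region bounded away from the critical points $\bar y_1, \dots, \bar y_N$ and from the boundary, where $\abs{u'}$ is uniformly bounded below and the periodic-case argument applies essentially unchanged; (ii) neighborhoods of each $\bar y_i$ of size comparable to $(\nu |k|^{-1})^{1/(n_0+3)}$, handled exactly as in Theorem \ref{thm:maindecay} via a localized spectral gap inequality for $u'$ vanishing to order $n_0$; and (iii) neighborhoods of $y=0$ and $y=1$, where the weights in $A_k, B_k, C_k$ are chosen to degenerate to leading order $1$ (mimicking a non-degenerate critical point), yielding the $n_c = 1$ contribution.

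The main obstacle will be controlling the boundary contributions that appear when differentiating $\Phi_k$ in time. The no-flux condition on $f$ kills the boundary terms arising from $\langle \partial_{yy} f, f\rangle$ and $\langle \partial_{yy} f, \partial_y f\rangle$-type pairings with constant weights, but once the weights $A_k, B_k, C_k$ have nontrivial $y$-profiles the integrations by parts against $\partial_y$ generate boundary contributions at $y = 0, 1$ that do not vanish a priori. The remedy is to impose that the weights (or their relevant combinations) vanish to appropriate order at the boundary, chosen consistently with the $n_c = 1$ scaling in the boundary neighborhoods. With the weights so chosen, one derives a localized differential inequality of the form
\begin{equation*}
\ddt \Phi_k \leq -c\,\lambda_{\nu,k}\, \Phi_k
\end{equation*}
on each region, with constants depending on the local behavior of $u$ and the distance to the boundary. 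Patching these local estimates via a partition of unity subordinate to the decomposition above, as in the periodic proof, and using the coercivity $\norm{f}_{L^2}^2 \lesssim \Phi_k \lesssim \norm{f}_{L^2}^2$ (for suitable $\alpha, \beta, \gamma$ depending on $\nu$ and $k$), yields the claimed exponential decay \eqref{eq:maindecaybdy} and \eqref{eq:hypomaindecaybdy} with rate $\lambda_{\nu,k}$ governed by $n_c$ rather than $n_0$.
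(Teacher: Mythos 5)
Your overall strategy matches the paper's: work mode-by-mode in $x$, build a hypocoercive energy functional with $y$-dependent, $(\nu,k)$-dependent weights adapted to a partition of unity, identify the extra boundary term from integration by parts (the paper isolates $2\nu k[i\beta u'\ff\de_{yy}\ff]|_{y=0}^1$ in \eqref{eq:bdryder}), kill it by forcing the relevant weight to vanish at $y=0,1$, and treat the boundary region with weights scaled as for a first-order critical point, whence $n_c=\max\{n_0,1\}$ via a half-line spectral gap (Proposition \ref{LocSpec_Halfspace}). All of this is exactly what Section \ref{sec:chan} does.

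There is, however, a genuine gap in your closing step. You assert ``coercivity $\norm{f}_{L^2}^2 \lesssim \Phi_k \lesssim \norm{f}_{L^2}^2$'' and conclude the semigroup estimates directly from $\ddt\Phi_k\le -c\lambda_{\nu,k}\Phi_k$. The upper bound is false: by construction (cf.\ \eqref{eq:prop1} and \eqref{eq:equiv1}), $\Phi$ controls and is controlled by weighted $H^1$-type quantities such as $\norm{\sqrt{\alpha}\,\de_y\ff}^2$ and $|k|^2\norm{\sqrt{\gamma}\,u'\ff}^2$, and in particular $\Phi(0)$ is infinite for generic $L^2$ data. The differential inequality therefore gives exponential decay of an $H^1$-like functional, not of $\norm{f}_{L^2}$ from $L^2$ data. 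Converting this into the stated $L^2\to L^2$ operator bound requires the additional machinery of Section \ref{sec:L2decay}: a short-time parabolic regularization argument to reach a time $\tau_R\lesssim \nu^{1/2}|k|^{-1/2}$ at which $\Phi(\tau_R)\lesssim\nu^{-r}|k|^p\norm{f_{in}}_{L^2}^2$, an estimate of the pseudo-spectral abscissa $\Psi(L_{k,\nu})$ via the resolvent integral formula, and the Gallagher--Gallay--Nier-type Lemma \ref{lem:GGN} (suitably adapted because the operator is only uniformly quasi-sectorial) to translate the pseudo-spectral bound back into a semigroup decay bound. That machinery is also precisely what produces the logarithmic factor $(1+\log|k|+\log\nu^{-1})^2$ in $\lambda_{\nu,k}$, which your differential-inequality-only route cannot account for. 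Your proposal would be complete if you replaced the false $L^2$-equivalence of $\Phi_k$ with this Section \ref{sec:L2decay} argument, as the paper itself does at the end of Section \ref{sec:chan}.
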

In the case of strictly monotone shear flows, it is understood that $n_0=0$ in the statement above. 
Compared to the decay rate devised in Theorem \ref{thm:maindecay}, the presence of the boundary 
introduces an additional effect that is comparable to the presence of a critical point of vanishing order 1,
at least from the scaling in $\nu$ and $k$ of the rate. We are currently unsure if this is optimal, however, a heuristic reason for why it might be is
as follows: the no-flux boundary conditions on $f$ allow to study the problem via even reflection around $y=1$, effectively introducing a fictitious critical point at $y=1$ and hence degrading the decay rate (and similarly to study $y = 0$). 
We also remark that no-flux boundaries can cause issues when studying inviscid damping in the linearized Euler equations \cite{Zillinger2015}. 

\begin{remark}[Extensions to other spatial settings] \label{rmk:exten}
Theorem \ref{thm:maindecay} can be easily extended to a few additional settings: 

\medskip

\noindent $\diamond$ $\T\times [0,\infty)$ or $\Torus \times \Real$: Assuming that there is some $c > 0$ such that $\abs{u'(y)} \geq c$ outside of a compact set, the analysis we employ applies also to geometries which are unbounded in $y$ via some modifications of the proof (at least in the elliptic case).

\noindent $\diamond$ $\RR \times D$: More non-trivially, the proof of Theorem \ref{thm:maindecay} does \emph{not} use any specific condition on $k$ or $\nu$ besides $\nu \abs{k}^{-1} \leq \kappa_0$, and hence immediately applies to tori of general side-length $L$, $\T_L \times D$, and also to $\RR \times D$, for those frequencies which satisfy $\nu \leq \kappa_0 \abs{k}$, holding pointwise in $k$. 
\end{remark}

\subsection{Small noise inviscid limits and anomalous scalings}
An interesting application of our semigroup estimate is related to small noise inviscid limits of stochastically forced
passive scalars, in the context of invariant measures. Consider equation \eqref{eq:passive}, posed in the
two-dimensional torus and stochastically forced, namely
\begin{equation}\label{eq:viscostoc}
\d f + \left(u\de_x f - \nu \Delta f\right) \d t = \nu^{a/2} \, \Psi\, \d W_t,
\end{equation} 
where $a>0$ is a fixed parameter and
\begin{align*}
\Psi \d W_t = \sum_{(k,j) \in \ZZ^2} \psi_{k,j} e_{k,j}\d W^{k,j}_t, \qquad \psi_{k,j} = \overline{\psi_{-k,-j}}, \qquad W^{k,j}_t  = W^{-k,-j}_t,
\end{align*} 
denotes white-in-time, colored-in-space noise  defined through the standard Fourier basis
\begin{align*}
e_{k,j} = \frac{1}{4 \pi^2} \e^{-ikx - ijy} 
\end{align*}
and one-dimensional independent Brownian motions $\{W^{k,j}_t\}_{(k,j) \in \ZZ^2}$.
It is a classical result \cite{DZ96} that under the assumption
$$
\|\Psi\|^2 =\sum_{(k,j) \in \ZZ^2} |\psi_{k,j}|^2 <\infty,
$$
the Markov semigroup generated by \eqref{eq:viscostoc} has a unique invariant 
Gaussian measure $\mu_\nu=\mathcal{N}(0,Q_\nu)$ on $L^2$, with covariance
operator given by
\begin{equation}\label{eq:convar}
Q_\nu = \nu^{a}\int_0^\infty S_\nu(t)\Psi \Psi^\ast S_\nu(t)^\ast \d t. 
\end{equation}
As $\nu\to 0$, we obtain the following behavior for the sequence $\{\mu_\nu\}_{\nu\in (0,1]}$.
\begin{theorem}\label{thm:kuksin}
Within the assumption of Theorem \ref{thm:maindecay}, suppose 
that $\psi_{0,j}=0$ for all $j\in \ZZ$,  and let the parameter $a$ satisfy
\begin{equation}\label{eq:aa}
a\in \left(\frac{n_0+1}{n_0+3},1\right].
\end{equation}
Then, as $\nu\to 0$, we have that $\mu_\nu\to \delta_0$, a Dirac mass centered at 0, in the sense of measures. 
\end{theorem}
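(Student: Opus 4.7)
The plan is to reduce the weak convergence $\mu_\nu\to\delta_0$ to the quantitative statement $\mathrm{tr}(Q_\nu)\to 0$ as $\nu\to 0$. Since $\mu_\nu=\mathcal{N}(0,Q_\nu)$ is a centered Gaussian on $L^2(\TT)$, one has $\int_{L^2}\norm{f}_{L^2}^2\,\d\mu_\nu(f)=\mathrm{tr}(Q_\nu)$, so Chebyshev's inequality gives, for every $\eps>0$,
\begin{equation}
\mu_\nu\!\left(\set{f\in L^2:\norm{f}_{L^2}>\eps}\right)\le \eps^{-2}\,\mathrm{tr}(Q_\nu).
\end{equation}
Hence $\mathrm{tr}(Q_\nu)\to 0$ forces all mass of $\mu_\nu$ to concentrate at the origin, which is equivalent to $\mu_\nu\to\delta_0$ weakly.

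The next step is to compute $\mathrm{tr}(Q_\nu)$ by diagonalizing $\Psi\Psi^*$ in the basis $\{e_{k,j}\}$. Writing $Q_\nu$ as in \eqref{eq:convar} and using $\mathrm{tr}(S_\nu(t)\Psi\Psi^*S_\nu(t)^*)=\norm{S_\nu(t)\Psi}_{HS}^2$, a direct calculation gives
\begin{equation}
\mathrm{tr}(Q_\nu)=\nu^a\sum_{(k,j)\in\ZZ^2}\abs{\psi_{k,j}}^2\int_0^\infty\norm{S_\nu(t)e_{k,j}}_{L^2}^2\,\d t.
\end{equation}
The hypothesis $\psi_{0,j}=0$ eliminates the $k=0$ sector, on which $S_\nu(t)$ acts purely by heat diffusion and no enhanced dissipation mechanism is available. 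For each $k\ne 0$ and every sufficiently small $\nu$ one has $\nu\abs{k}^{-1}\le\nu\le\kappa_0$, so Theorem \ref{thm:maindecay} applies (observing that $P_k e_{k,j}=e_{k,j}$) to yield
\begin{equation}
\int_0^\infty\norm{S_\nu(t)e_{k,j}}_{L^2}^2\,\d t\le\frac{C^2\norm{e_{k,j}}_{L^2}^2}{2\eps\,\lambda_{\nu,k}}.
\end{equation}

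Substituting the explicit form \eqref{eq:decayrate} and factoring out the $\nu$-power produces the bound
\begin{equation}
\mathrm{tr}(Q_\nu)\lesssim \nu^{\,a-\frac{n_0+1}{n_0+3}}\sum_{k\ne 0,\,j}\frac{\abs{\psi_{k,j}}^2\,(1+\log\abs{k}+\log\nu^{-1})^2}{\abs{k}^{2/(n_0+3)}}.
\end{equation}
Using $(A+B)^2\le 2A^2+2B^2$, the sum splits into a $\nu$-independent part bounded by $\sup_{k\ge 1}\bigl[(1+\log\abs{k})^2\abs{k}^{-2/(n_0+3)}\bigr]\norm{\Psi}^2$ and a part of size $(\log\nu^{-1})^2\norm{\Psi}^2$, both finite. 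The strict inequality $a>\tfrac{n_0+1}{n_0+3}$ furnished by \eqref{eq:aa} then yields a positive power of $\nu$ that beats the logarithmic factor, and we conclude $\mathrm{tr}(Q_\nu)\to 0$.

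The proof is therefore essentially a bookkeeping exercise once Theorem \ref{thm:maindecay} is applied mode-by-mode; the only delicate point is making sure the logarithmic corrections in $\lambda_{\nu,k}$ are absorbed, which is precisely why the endpoint $a=\tfrac{n_0+1}{n_0+3}$ is excluded in \eqref{eq:aa}. The Freidlin-type scaling $\nu^{a/2}$ of the noise is tuned so that the gain $\nu^a$ overcomes the inverse enhanced-dissipation rate $\lambda_{\nu,k}^{-1}\sim\nu^{-(n_0+1)/(n_0+3)}$ inside the time integral.
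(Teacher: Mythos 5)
Your proof is correct and follows essentially the same approach as the paper's: insert the enhanced-dissipation decay \eqref{eq:maindecay} mode-by-mode into the covariance integral \eqref{eq:convar}, integrate in time, and absorb the logarithmic correction in $\lambda_{\nu,k}$ using the strict inequality $a>\tfrac{n_0+1}{n_0+3}$. The one refinement you add is to package the estimate explicitly as $\mathrm{tr}(Q_\nu)\to 0$ together with the Chebyshev step linking this to $\mu_\nu\to\delta_0$; the paper presents its estimate as a bound on $\|Q_\nu\|_{L^2\to L^2}$ (see \eqref{eq:esti}), but the intermediate quantity it actually controls is exactly your trace bound, so the two computations coincide and your framing simply makes the final measure-theoretic step airtight (for infinite-dimensional Gaussians it is decay of the trace, not merely of the operator norm, that forces concentration at the origin).
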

\begin{remark} 
In fact, the convergence we deduce is slightly more precise: we show that the covariance \eqref{eq:convar} vanishes in the strong operator topology.
\end{remark}
In the case $a=1$ (the so-called Freidlin scaling \cites{Freidlin2002, FW1994}), 
this type of inviscid limit has been recently analyzed in \cite{BCZGH15},
without the requirement that $\psi_{0,j}=0$ for all $j\in \ZZ$. More generally, 
the idea of balancing noise and diffusivity has been considered
extensively in the context of two-dimensional Navier-Stokes equations and other nonlinear settings \cites{K04,K10,KS04,KS12,GSV13,MP14}. In all these works, the requirement $a=1$
is dictated by the following reason: a simple application of It\^o's formula implies the energy balance
\begin{equation}\label{eq:energy}
	\EE \|f^\nu(t)\|^2_{L^2}+2\nu \EE\int_0^t\|f^\nu(s)\|^2_{H^1}\d s=
	\EE \|f^\nu(0)\|^2_{L^2}+\nu^a\|\Psi\|^2t, \qquad \forall t\geq0.
\end{equation}
In particular, any statistically stationary (with respect to the invariant measure $\mu_\nu$) solution $f^\nu_S$  satisfies
\begin{equation}\label{eq:energy2}
 \EE\|f_S^\nu(t)\|^2_{H^1}=\frac{\nu^{a-1}}{2}\|\Psi\|^2, \qquad \forall t\geq 0.
\end{equation}
Therefore, if $a=1$ we have a  \emph{uniform}  $H^1$-bound that translates into the
compactness of $\{\mu_\nu\}_{\nu\in (0,1]}$,
and hence existence of subsequential limit points. Such limiting measures, sometimes 
referred to as \emph{Kuksin} measures (due to \cite{K04}), can be proved to be invariant for the inviscid equation
\begin{equation}\label{eq:inviscid}
\de_t f + u \de_x f = 0.
\end{equation}
It is therefore evident that when $a<1$, the above procedure cannot be applied since statistically stationary solutions blow up in $H^1$ by \eqref{eq:energy2}. 
In fact, in the context of the two-dimensional Navier-Stokes equation, when $a<1$ it can be proved 
that  $\{\mu_\nu\}_{\nu\in (0,1]}$ has no accumulation points in the sense of weak
convergence on $L^2$ (c.f. \cite{KS12}*{Theorem 5.2.17}). 
In the situation described by Theorem \ref{thm:kuksin}, the picture is therefore strikingly different and we cannot rely on soft compactness arguments. 
Rather, we make use of the explicit covariance formula \eqref{eq:convar}, available in the linear setting, together with the explicit semigroup 
decay estimate \eqref{eq:maindecay}, which allows the relaxation of the constraint on $a$ as stated in \eqref{eq:aa}. 
Note, Theorem \ref{thm:kuksin} also proves the existence of a \emph{unique} Kuksin measure, which is not known in most linear or nonlinear settings.  

\subsection{Outline of the article}
Section \ref{sec:noidea} is dedicated to the construction of a suitable energy functional $\Phi$ with exponential
decay rate analogous to \eqref{eq:decayrate}. We also prove a localized spectral gap inequality that allows
to treat each individual critical point in a sharp way. In Section \ref{sec:err} we estimate various error terms appearing in the
differential inequality for $\Phi$ and prove that in fact $\Phi$ decay exponentially to zero,
while in Section \ref{sec:L2decay} we show how this translates into the $L^2$-decay estimate 
\eqref{eq:maindecay} for $S_\nu(t)$. The proof of Theorem \ref{thm:channeldecay} is carried out in Section \ref{sec:chan},
where we adapt the machinery to the case of the channel. Section \ref{sec:kuksin} is devoted to the proof of the
small noise inviscid limit result stated in Theorem \ref{thm:kuksin}. We conclude the article with Appendix \ref{app:mixing},
in which we prove a decay estimate on the inviscid problem \eqref{eq:inviscid}.

\section{Augmented energy functional}\label{sec:noidea}
In what follows, we will be working with the the solution operator 
$S_\nu(t)$ of \eqref{eq:passive}, and will highlight the differences with the 
other equations when needed.
As discussed in the introduction, the statements of Theorems \ref{thm:maindecay}-\ref{thm:channeldecay}
provide estimates which involve the \emph{non-zero} frequencies in the $x$-direction. 
In physical variables, it is therefore natural to assume
$$
\int_\T f_{in}(x,y)\d x=0,\qquad \forall y\in \T,
$$
a property that is preserved by the equation, so that
\begin{equation}\label{eq:zeromean1}
\int_\T f(t,x,y)\d x=0,\qquad \forall y\in \T,\quad t>0.
\end{equation}
Equivalently, we expand the solution to \eqref{eq:passive}
as
$$
f(t,x,y)=\sum_{k\neq 0} \ff_k(t,y)\e^{ikx},
$$
where the frequency $k=0$ is neglected since \eqref{eq:zeromean1} immediately translates into
\begin{equation}\label{eq:zeromean2}
\ff_0(t,y)=0,\qquad \forall y\in \T,\quad t>0.
\end{equation}
By taking a Fourier transform in $x$, equation \eqref{eq:passive}
becomes
\begin{equation}\label{eq:fourier1}
\de_t \ff_k+iku\ff_k=\nu \big[-|k|^2+\de_{yy}\big] \ff_k, \qquad \ff_k(0,y)=\widehat{f_{in}}_{k}(y).
\end{equation}
Notice that both equations decouple in $k$, so that we can consider each frequency separately. For this
reason, throughout the remaining of the article, we will suppress every dependence on $k\neq 0$ to avoid
further complications in the notation. We will denote by $L^2$ the one-dimensional 
Lebesgue space of square integrable $y$-dependent functions on $\T$, endowed with the 
scalar product
$$
\l \varphi,\psi\r=\int_\T \varphi(y)\overline{\psi(y)}\d y
$$
and norm
$$
\|\varphi\|=\left[\int_\T |\varphi(y)|^2\d y\right]^{1/2}.
$$
As for other normed spaces, their norms will be explicitly indicated as $\|\cdot\|_X$.
The main step in the proof of Theorem \ref{thm:maindecay} is the following theorem. 
\begin{theorem}\label{thm:decay1}
There exists a positive constant $\kappa_0\ll1$ such that the following holds: for each integer $k\neq 0$ and $\nu>0$ with
$\nu |k|^{-1}\leq \kappa_0$, 
there exist a positive functional $\Phi(t)$ so that:
\begin{enumerate}
	\item $\Phi$ is comparable to the $H^1$-norm, namely
	\begin{equation}\label{eq:prop1}
	\frac{\nu^{2/3}}{|k|^{2/3}}\|\de_y\ff\|^2+ \|u'\ff\|^2
	\lesssim \Phi-\|\ff\|^2\lesssim \|\de_y\ff\|^2+\frac{|k|^\frac{2n_0}{n_0+3}}{\nu^{\frac{2n_0}{n_0+3}}}\|u'\ff\|^2.
	\end{equation}
	\item $\Phi$ satisfies the decay estimate
	\begin{equation}\label{eq:prop2}
	\Phi(t)\leq  \Phi(0) \e^{-\widetilde\eps\widetilde\lambda_{\nu,k}t}, \qquad \forall t\geq 0,
	\end{equation}
	where
	\begin{equation}\label{eq:prop3}
	\widetilde\lambda_{\nu,k}=\, \nu^\frac{n_0+1}{n_0+3}|k|^\frac{2}{n_0+3},
	\end{equation}
	and $\widetilde\eps>0$ is a constant independent of $\nu$ and $k$. 
\end{enumerate}
\end{theorem}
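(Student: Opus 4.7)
The proof proceeds at each fixed nonzero $k$ in the Fourier-truncated equation \eqref{eq:fourier1}. Writing $f=\widehat f_k$ throughout, the generator splits into a skew-symmetric piece $iku$ and a symmetric dissipation $-\nu\partial_{yy}+\nu|k|^2$, whose commutator $[\partial_y,iku]=iku'$ is the engine of enhanced dissipation. Following the Villani hypocoercivity philosophy \cite{villani2009} but with $y$-dependent weights (pseudo-differential in the $x$-variable) as in \cite{BeckWayne11}, I would take
\begin{align*}
\Phi(t) = \|f\|^2 + \langle \alpha\,\partial_y f,\partial_y f\rangle + 2\,\Re\langle \beta\,\partial_y f, iku' f\rangle + \langle \gamma\, u' f, u' f\rangle,
\end{align*}
with smooth non-negative weights $\alpha(y),\beta(y),\gamma(y)$ depending parametrically on $\nu,k$. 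The weights will be tuned so that $\beta^2 k^2 \leq \tfrac{1}{2}\alpha\gamma$ pointwise, which by Cauchy--Schwarz gives positivity of $\Phi-\|f\|^2$ and, after choosing $\alpha,\gamma$ of sizes consistent with \eqref{eq:prop1} (roughly $\alpha$ comparable to $\nu^{2/3}|k|^{-2/3}$ and $\gamma$ reaching $(|k|/\nu)^{2n_0/(n_0+3)}$ near the critical points), delivers the two-sided comparability required in (1).

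\textbf{Energy identity.} Differentiating $\Phi$ along \eqref{eq:fourier1}, using $\partial_t\partial_y f = -iku\partial_y f - iku' f - \nu|k|^2\partial_y f + \nu\partial_{yyy} f$, one obtains
\begin{align*}
\tfrac12\,\ddt\Phi = -\nu\|\partial_y f\|^2 - \nu|k|^2\|f\|^2 - \nu\langle\alpha,|\partial_{yy}f|^2\rangle - k^2\langle\beta (u')^2, |f|^2\rangle + (\text{further good terms}) + \mathcal{E},
\end{align*}
where $\mathcal{E}$ bundles contributions involving $\alpha',\beta',\gamma',u''$ and the unmatched pieces of the cross term. The crucial hypocoercive contribution is $-k^2\langle\beta(u')^2,|f|^2\rangle$, produced by $\Re\langle\beta(-iku'f),iku'f\rangle$ upon substituting the evolution of $\partial_y f$ into the cross term.

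\textbf{Localized spectral gap.} The crux is to convert the parabolic dissipation $\nu\|\partial_y f\|^2$ and the hypocoercive dissipation $k^2\langle\beta(u')^2,|f|^2\rangle$ into a uniform lower bound on $\widetilde\lambda_{\nu,k}\|f\|^2$, despite $u'$ vanishing at the critical points $\bar y_1,\ldots,\bar y_N$. On a window $I_\delta(\bar y_i)$ of width $\delta$, Taylor expansion together with \eqref{eq:last} gives $|u'(y)|\simeq|y-\bar y_i|^{n_0}$, and I expect a localized Poincar\'e-type inequality of the form
\begin{align*}
\|f\|_{L^2(I_\delta(\bar y_i))}^2 \lesssim \delta^{2}\,\|\partial_y f\|_{L^2(I_\delta(\bar y_i))}^2 + \delta^{-2n_0}\,\|u' f\|_{L^2(I_\delta(\bar y_i))}^2,
\end{align*}
with an analogous, easier bound outside the union of these windows. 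Feeding the good terms of the energy identity into this inequality yields an effective rate of order $\min(\nu\delta^{-2},\,k^2\beta\delta^{2n_0})$; optimizing $\delta$ and the amplitudes of $\alpha,\beta,\gamma$ on each window (subject to the positivity constraint $\beta^2 k^2 \leq \tfrac{1}{2}\alpha\gamma$ and to \eqref{eq:prop1}) reproduces exactly the exponents in $\widetilde\lambda_{\nu,k} = \nu^{(n_0+1)/(n_0+3)}|k|^{2/(n_0+3)}$.

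\textbf{Error absorption and conclusion.} Each term in $\mathcal{E}$ contains either a derivative of a weight (concentrated in the transition zones inside and outside the windows) or a factor of $u''$, and can be paired via Cauchy--Schwarz with a good dissipative term at the cost of a factor made small by the smallness hypothesis $\nu|k|^{-1}\leq\kappa_0$. Once $\mathcal{E}$ is absorbed, one arrives at $\ddt\Phi \leq -\widetilde\eps\,\widetilde\lambda_{\nu,k}\,\Phi$, and Gronwall's lemma yields \eqref{eq:prop2}. The main obstacle I anticipate is the simultaneous design of the weights $\alpha,\beta,\gamma$: they must be large where $u'$ is small and vice versa, satisfy both the positivity constraint and the two-sided bound \eqref{eq:prop1}, and have derivatives mild enough that the error terms they generate do not exceed $\widetilde\lambda_{\nu,k}\Phi$. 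The fact that these constraints can be met uniformly across all $N$ critical points is exactly why the weights are allowed to depend on $y$ (i.e., to act as pseudo-differential operators on the original equation) rather than being scalar constants as in the textbook hypocoercivity setting.
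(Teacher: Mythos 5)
Your plan follows the same general strategy as the paper's proof --- an augmented functional $\Phi$ with $y$-dependent weights $\alpha,\beta,\gamma$, a spectral-gap inequality near critical points, and a Gr\"onwall argument --- but there are two genuine gaps in the way you propose to execute it.

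First, the localized Poincar\'e inequality you propose uses the worst-case exponent $n_0$ at every critical point, and your claim ``$|u'(y)|\simeq|y-\bar y_i|^{n_0}$ on $I_\delta(\bar y_i)$'' is false at any critical point whose order of vanishing $j$ is strictly less than $n_0$. The paper's Proposition~\ref{LocSpec} is more refined: it gives a localized spectral-gap inequality with exponent $j/(j+1)$ at critical points of order $j$, namely $\sigma^{j/(j+1)}\|f_j\|^2\lesssim \sigma\|\partial_y f_j\|^2+\|u'f_j\|^2$. This $j$-dependence is not a cosmetic refinement: the amplitudes $\alpha_j,\beta_j,\gamma_j$ in \eqref{eq:param} are chosen differently on each $\phi_j$, and the stronger spectral gap at low-order critical points is precisely what allows the error terms supported near those points to be absorbed when the global weights are sized for the worst-case $n_0$.

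Second, and more seriously, your error-absorption step is based on a misidentification of which error terms are hard. You assert that every term in $\mathcal E$ ``contains either a derivative of a weight (concentrated in the transition zones) or a factor of $u''$, and can be paired via Cauchy--Schwarz with a good dissipative term at the cost of a factor made small by $\nu|k|^{-1}\le\kappa_0$.'' This is not the case. The two primary error terms, $\nu|k|^2\|\beta u'''\ff\|^2$ and $\nu|k|^2\|\sqrt\gamma u''\ff\|^2$, involve $\beta$ and $\gamma$ themselves (not their derivatives); they are supported over the whole of $\T$, including arbitrarily close to critical points, and they are \emph{not} small by a factor of $\nu|k|^{-1}$. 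For instance, at a critical point of order $j=2$ the bound in Lemma~\ref{lem:err2} reads $\nu|k|^2\eps_{\beta,2}^2\beta_2^2\|\ff_2\|^2\lesssim\eps_{\beta,2}^{5/3}\big[\nu\|\partial_y\ff_2\|^2+\eps_{\beta,2}\beta_2|k|^2\|u'\ff_2\|^2\big]$ --- the two sides carry exactly the same powers of $\nu$ and $|k|$, and smallness must come entirely from choosing $\eps_{\beta,2}$ small, independently of $\kappa_0$. Controlling these terms requires (i) the localized spectral gap with the correct local exponent $j/(j+1)$, (ii) a case split between $j\le 2$ and $j\ge 3$ (in the latter case Taylor's theorem gives $|u'''|\lesssim |u'|^{(j-2)/j}$ and a H\"older/Young interpolation), and (iii) the constraint hierarchy $\eps_{\gamma,j}\ll\eps_{\beta,j}^{j/(j+1)}$, $\eps_{\alpha,j}^2\ll\eps_{\beta,j}$ chosen \emph{before} $\kappa_0$. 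Without this structure, the argument does not close; in particular a single global window width $\delta$ optimized once cannot reconcile the competing demands at critical points of different orders.
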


Roughly speaking, the basic idea of hypocoercivity is to find an equivalent norm (as expressed in e.g. \eqref{eq:prop1}) for which the linear operator is coercive (as expressed in \eqref{eq:prop2}); see \cite{villani2009} for more discussion. 
However, the situation here is more subtle than in \cite{villani2009} due to the interest in the singular limit $\nu \abs{k}^{-1} \rightarrow 0$, which requires more sophisticated techniques and a more precise $\Phi$, as was observed already in \cites{BeckWayne11,GallagherGallayNier2009}; the proof of Theorem \ref{thm:maindecay} will require even more precision. 

\subsection{The energy functional}
Let $\alpha,\beta,\gamma$ be $y$-dependent positive smooth functions defined for $y\in \T$, whose precise
form will be specified below (see Section \ref{sec:abgdef}). For each integer $k\neq 0$, we define the energy-like functional
\begin{equation}\label{eq:Phi}
\Phi(t)=\|\ff(t)\|^2+\|\sqrt\alpha\de_y \ff(t)\|^2+2 k \Re \l i\beta u'  \ff(t), \de_y\ff(t)\r+ |k|^2\|\sqrt\gamma u' \ff(t)\|^2.
\end{equation}
In this section and the next one, we aim to prove that $\Phi$ has the properties stated in Theorem \ref{thm:decay1}.
The proof relies on a number of intermediate lemmas and will be carried out below. As a preliminary
step, we impose the pointwise constraint
\begin{equation}\label{eq:constr1}
\frac{\beta(y)^2}{\alpha(y)}\leq\frac18 \gamma(y),\qquad  \forall y\in \T.
\end{equation}
In this way, the cross-term involving 
$\beta$ in \eqref{eq:Phi} can be estimated as
\begin{align*}
2 k \Re \l i\beta u'  \ff, \de_y\ff\r
&\leq 2|k|\left\|\frac{\beta}{\sqrt\alpha}u' \ff\right\|\|\sqrt\alpha\de_y\ff\|\\
&\leq\frac12 \|\sqrt\alpha\de_y\ff\|^2+4 |k|^2\left\|\frac{\beta}{\sqrt\alpha}u' \ff\right\|^2\\
&\leq\frac12 \|\sqrt\alpha\de_y\ff\|^2+\frac12 |k|^2\|\sqrt\gamma u' \ff\|^2.
\end{align*}
Consequently,
\begin{equation}\label{eq:equiv1}
\|\ff\|^2+\frac12\|\sqrt\alpha\de_y \ff\|^2+ \frac12 |k|^2\|\sqrt\gamma u' \ff\|^2\leq\Phi
\leq \|\ff\|^2+\frac32\|\sqrt\alpha\de_y \ff\|^2+ \frac32 |k|^2\|\sqrt\gamma u' \ff\|^2.
\end{equation}
The claim in Theorem \ref{thm:decay1} relies on the derivation of a suitable differential inequality for $\Phi$. 

\begin{lemma}
Let $\Phi$ be defined as in \eqref{eq:Phi}. Then
\begin{equation}\label{eq:der}
\begin{aligned}
\ddt \Phi =&-2\nu |k|^2\|\ff\|^2-2\nu\|\de_y\ff\|^2\\
&-2\nu|k|^2\|\sqrt\alpha\de_y \ff\|^2-2\nu\|\sqrt\alpha\de_{yy}\ff\|^2
-2 k\Re \l i \alpha u'\ff,\de_y\ff\r-2\nu\Re\l\alpha'\de_y\ff,\de_{yy}\ff\r\\
&-2|k|^2\|\sqrt\beta u'\ff\|^2-4\nu k^3\Re\l i \beta u'\ff,\de_y\ff\r+ 4\nu k\Re \l i \beta u'\de_{yy}\ff,\de_y\ff\r
\\
&+2\nu k\Re\l i\beta u'''\ff,\de_y\ff\r+4\nu k \Re\l i\beta'u''\ff,\de_y\ff\r+2\nu k \Re \l i\beta''u'\ff,\de_y\ff\r\\ 
&-2\nu|k|^4\|\sqrt\gamma u'f\|^2-2\nu|k|^2\|\sqrt\gamma u'\de_y\ff\|^2-4\nu|k|^2\Re\l \gamma u'u''\ff,\de_y\ff\r\\
&-2\nu|k|^2\Re\l \gamma'u'\ff,u'\de_y\ff\r.
\end{aligned}
\end{equation}
\end{lemma}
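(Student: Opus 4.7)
The identity is a direct, if lengthy, computation: differentiate each of the four summands in $\Phi$, substitute the Fourier-side equation $\de_t\ff = -iku\ff - \nu|k|^2\ff + \nu\de_{yy}\ff$, and integrate by parts in $y$. Since we are on $\T$ with smooth weights, no boundary terms ever appear, and the reality of $u,\alpha,\beta,\gamma$ lets us discard purely imaginary quantities at each step.

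The plan is to split the contribution of the generator into three pieces: the transport $T\ff := -iku\ff$ (skew-adjoint), the zeroth-order damping $-\nu|k|^2\ff$ (trivial multiplier), and the diffusion $D\ff := \nu\de_{yy}\ff$ (symmetric after IBP). For $\|\ff\|^2$ the transport contribution vanishes, the damping gives $-2\nu|k|^2\|\ff\|^2$, and one IBP on $D$ yields $-2\nu\|\de_y\ff\|^2$. For $\|\sqrt\alpha\de_y\ff\|^2 = \l\alpha\de_y\ff,\de_y\ff\r$ I differentiate and commute $\de_y$ with $T$, noting that $[\de_y,T]\ff = -iku'\ff$ while the term involving $u\de_y\ff$ is purely imaginary and drops; the $D$-contribution is $2\nu\Re\l\alpha\de_{yyy}\ff,\de_y\ff\r$, and two integrations by parts produce precisely $-2\nu\|\sqrt\alpha\de_{yy}\ff\|^2-2\nu\Re\l\alpha'\de_y\ff,\de_{yy}\ff\r$. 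For $|k|^2\|\sqrt\gamma u'\ff\|^2$, the transport part is again imaginary, the damping part gives $-2\nu|k|^4\|\sqrt\gamma u'\ff\|^2$, and one IBP of the diffusive contribution $2\nu|k|^2\Re\l\gamma(u')^2\de_{yy}\ff,\ff\r$ distributes $\de_y$ across $\gamma$, $u'$, $u'$, and $\bar\ff$, producing exactly the three remaining terms on the last two lines of the identity.

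The main bookkeeping obstacle is the cross term $2k\Re\l i\beta u'\ff,\de_y\ff\r$, because both slots depend on $\ff$ and the weight $\beta u'$ now sees second-order diffusion from both sides. I will apply the product rule and then, for each occurrence of $\de_t$, substitute the equation. The transport pieces combine cleanly: the $u\,\de_y\ff$ contribution from the $\de_y$-slot cancels its counterpart from the $\ff$-slot, and what remains is $2k\Re\l i\beta u'\ff,-iku'\ff\r = -2|k|^2\|\sqrt\beta u'\ff\|^2$. The damping part trivially produces $-4\nu k^3\Re\l i\beta u'\ff,\de_y\ff\r$. The delicate step is the diffusion term, $2\nu k\Re\l i\beta u'\de_{yy}\ff,\de_y\ff\r + 2\nu k\Re\l i\beta u'\ff,\de_{yyy}\ff\r$: in the second bracket I integrate by parts twice, moving $\de_y$ off $\ff$ while differentiating the coefficient $\beta u'$. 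The resulting Leibniz expansion $(\beta u')'' = \beta''u' + 2\beta'u'' + \beta u'''$ generates exactly the three lower-order terms $2\nu k\Re\l i\beta u'''\ff,\de_y\ff\r + 4\nu k\Re\l i\beta' u''\ff,\de_y\ff\r + 2\nu k\Re\l i\beta''u'\ff,\de_y\ff\r$ listed on the fourth line, while the portion that lands on $\de_y\ff$ recombines with $2\nu k\Re\l i\beta u'\de_{yy}\ff,\de_y\ff\r$ to give the stated $+4\nu k\Re\l i\beta u'\de_{yy}\ff,\de_y\ff\r$ (one factor of two from the two IBPs, one from the sum of the two slots).

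Collecting the contributions line by line reproduces \eqref{eq:der}. No inequalities, spectral information, or properties of the weights beyond smoothness and reality are used here; the identity is a structural consequence of the Leibniz rule, periodicity, and the $\Re/\Im$ symmetry of the drift-diffusion operator. The only thing to be careful about is signs in the IBP of the cross term, since conjugating the second slot introduces factors of $-i$ and $+i$ that must be tracked consistently when converting $\l\cdot,\cdot\r$ pairings into $\Re$ pairings.
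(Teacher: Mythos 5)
Your proposal is correct and is essentially the same computation the paper carries out: differentiate each summand of $\Phi$, substitute the Fourier-side equation, integrate by parts on $\T$, and discard purely imaginary contributions using the reality of $u,\alpha,\beta,\gamma$. The only cosmetic discrepancy is in the final parenthetical: the factor of $4$ in $4\nu k\Re\l i\beta u'\de_{yy}\ff,\de_y\ff\r$ comes from the explicit $2k$ prefactor in $\Phi$ times the factor of $2$ obtained when the integrated-by-parts $\l i\beta u'\ff,\de_{yyy}\ff\r$ contribution recombines with the first-slot term via $\Re\l i\beta u'\de_y\ff,\de_{yy}\ff\r = -\Re\l i\beta u'\de_{yy}\ff,\de_y\ff\r$, not from the number of integrations by parts — but this is a bookkeeping remark, not a mathematical error.
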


\begin{proof}
The proof essentially relies on  \eqref{eq:fourier1}, several integration by parts and the antisymmetric 
properties of the transport term. We give a few details, as it will
be useful to highlight where boundaries will play a role. Trivially,
\begin{equation}\label{eq:der1}
\ddt\|\ff\|^2=2\Re\l\de_t\ff,\ff\r=-2\nu |k|^2\|\ff\|^2-2\nu\|\de_y\ff\|^2,
\end{equation}
and
\begin{align}
\ddt\|\sqrt\alpha\de_y \ff\|^2&=2\Re\l\alpha\de_{y}\ff,\de_{yt}\ff\r\nonumber\\
&=-2k\Re\l i\alpha u'\ff,\de_y\ff\r-2\nu|k|^2\|\sqrt\alpha \de_y\ff\|^2+2\nu\Re\l\alpha \de_{yyy}\ff,\de_y\ff\r\nonumber\\
&=-2k\Re\l i\alpha u'\ff,\de_y\ff\r-2\nu|k|^2\|\sqrt\alpha \de_y\ff\|^2-2\nu\|\sqrt\alpha \de_{yy}\ff\|^2
-2\nu\Re\l\alpha' \de_{y}\ff,\de_{yy}\ff\r.\label{eq:der2}
\end{align}
The term containing $\beta$ is slightly more complicated. Firstly notice that
\begin{align*}
\ddt  \l i\beta u'  \ff, \de_y\ff\r&= \l i\beta u'  \de_t\ff, \de_y\ff\r+\l i\beta u'  \ff, \de_{yt}\ff\r\\
&= -2\nu|k|^2\l i \beta u'\ff,\de_y\ff\r-k\|\sqrt\beta u'\ff\|^2+\nu\l i\beta u' \de_{yy}\ff,\de_y\ff\r+\nu\l i\beta u'\ff,\de_{yyy}\ff\r.
\end{align*}
We integrate by parts in the last term to deduce 
\begin{equation}\label{eq:nontrivbdry}
\l i\beta u'\ff,\de_{yyy}\ff\r= -\l i\beta' u'\ff,\de_{yy}\ff\r-\l i\beta u''\ff,\de_{yy}\ff\r-\l i\beta u'\de_y\ff,\de_{yy}\ff\r,
\end{equation}
and therefore
\begin{align*}
\ddt \l i\beta u'  \ff, \de_y\ff\r=
&-2\nu|k|^2\l i \beta u'\ff,\de_y\ff\r-k\|\sqrt\beta u'\ff\|^2+\nu\Re\l i\beta u' \de_{yy}\ff,\de_y\ff\r\\
&-\nu\l i\beta' u'\ff,\de_{yy}\ff\r-\nu\l i\beta u''\ff,\de_{yy}\ff\r-\nu\l i\beta u'\de_y\ff,\de_{yy}\ff\r.
\end{align*}
We will have to take the real part of the above expression. By
integration by parts and anti-symmetry, we can compute
$$
-\Re\l i\beta' u'\ff,\de_{yy}\ff\r-\Re\l i\beta u''\ff,\de_{yy}\ff\r=\Re\l i\beta'' u'\ff,\de_{y}\ff\r+2\Re\l i\beta' u''\ff,\de_{y}\ff\r
+\Re\l i\beta u'''\ff,\de_{y}\ff\r,
$$
to finally obtain
\begin{equation}\label{eq:der3}
\begin{aligned}
\ddt \left[2 k \Re \l i\beta u'  \ff, \de_y\ff\r\right]=
&-2|k|^2\|\sqrt\beta u'\ff\|^2-4\nu k^3\Re\l i \beta u'\ff,\de_y\ff\r+ 4\nu k\Re \l i \beta u'\de_{yy}\ff,\de_y\ff\r\\
&+2\nu k\Re\l i\beta u'''\ff,\de_y\ff\r+4\nu k \Re\l i\beta'u''\ff,\de_y\ff\r+2\nu k \Re \l i\beta''u'\ff,\de_y\ff\r.
\end{aligned}
\end{equation}
As for the last term containing $\gamma$ we have 
\begin{align*}
\ddt\|\sqrt\gamma u' \ff\|^2&=2\Re\l \gamma (u')^2\ff,\de_t\ff\r 
=-2\nu|k|^2\|\sqrt\gamma u'f\|^2+2\nu\Re\l\gamma(u')^2\ff,\de_{yy}\ff\r\\
&=-2\nu|k|^2\|\sqrt\gamma u'f\|^2-2\nu\|\sqrt\gamma u'\de_y\ff\|^2-4\nu\Re\l \gamma u'u''\ff,\de_y\ff\r-2\nu\Re\l \gamma'u'\ff,u'\de_y\ff\r,
\end{align*}
so that
\begin{equation}\label{eq:der4}
\begin{aligned}
\ddt |k|^2\|\sqrt\gamma u' \ff\|^2=&-2\nu|k|^4\|\sqrt\gamma u'f\|^2-2\nu|k|^2\|\sqrt\gamma u'\de_y\ff\|^2\\
&-4\nu|k|^2\Re\l \gamma u'u''\ff,\de_y\ff\r-2\nu|k|^2\Re\l \gamma'u'\ff,u'\de_y\ff\r.
\end{aligned}
\end{equation}
Adding each term above, we reach the desired conclusion.
\end{proof}
Before we proceed, we highlight the differences with the hypoelliptic and the channel cases.

\begin{remark}[The hypoelliptic case]
In the case when $f$ satisfies equation \eqref{eq:hypopassive}, without diffusion in $x$, its $x$-Fourier transform,
still denoted by $\ff$, satisfies
\begin{equation}\label{eq:fourier2}
\de_t \ff+iku\ff=\nu\de_{yy} \ff, \qquad \ff(0,y)=\widehat{f_{in}}(y).
\end{equation}
As a consequence, a few terms are not
present in the equation for the time derivative of $\Phi$. Specifically, the terms with coefficient $|k|^2$ disappear
in \eqref{eq:der1}-\eqref{eq:der2}, the term with $k^3$ in \eqref{eq:der3}, and the term with $|k|^4$ in \eqref{eq:der4}.
It is important to notice, that none of these negative terms are used in the analysis below (see the statements of the lemmas
in Section \ref{sec:err}). The only instance in which the negative term 
$$
-2\nu|k|^4\|\sqrt\gamma u'f\|^2
$$ 
is used is in \eqref{eq:hypoerr} to control 
$$
4\nu k^3\Re\l i \beta u'\ff,\de_y\ff\r,
$$
which disappears in the hypoelliptic case.
In this case, the derivative of $\Phi$ satisfies the equation
\begin{equation}\label{eq:hypoder}
\begin{aligned}
\ddt \Phi =&-2\nu\|\de_y\ff\|^2\\
&-2\nu\|\sqrt\alpha\de_{yy}\ff\|^2
-2 k\Re \l i \alpha u'\ff,\de_y\ff\r-2\nu\Re\l\alpha'\de_y\ff,\de_{yy}\ff\r\\
&-2|k|^2\|\sqrt\beta u'\ff\|^2+ 4\nu k\Re \l i \beta u'\de_{yy}\ff,\de_y\ff\r\\
&+2\nu k\Re\l i\beta u'''\ff,\de_y\ff\r+4\nu k \Re\l i\beta'u''\ff,\de_y\ff\r+2\nu k \Re \l i\beta''u'\ff,\de_y\ff\r\\ 
&-2\nu|k|^2\|\sqrt\gamma u'\de_y\ff\|^2-4\nu|k|^2\Re\l \gamma u'u''\ff,\de_y\ff\r\\
&-2\nu|k|^2\Re\l \gamma'u'\ff,u'\de_y\ff\r.
\end{aligned}
\end{equation}
\end{remark}

\begin{remark}[The channel case] \label{rmk:Channel}
On the channel $\T\times [0,1]$ with the no-flux boundary conditions \eqref{eq:chanpassiveBC},
it is not hard to check that the only boundary terms arises in \eqref{eq:nontrivbdry} in the form
\begin{equation}\label{eq:nontrivbdry2}
\l i\beta u'\ff,\de_{yyy}\ff\r= -\l i\beta' u'\ff,\de_{yy}\ff\r-\l i\beta u''\ff,\de_{yy}\ff\r-\l i\beta u'\de_y\ff,\de_{yy}\ff\r
+\left[i\beta u'\ff\de_{yy}\ff\,\right]\Big|_{y=0}^1,
\end{equation}
producing therefore an additional term in the time derivative of $\Phi$:
\begin{equation}\label{eq:bdryder}
\begin{aligned}
\ddt \Phi =&-2\nu |k|^2\|\ff\|^2-2\nu\|\de_y\ff\|^2\\
&-2\nu|k|^2\|\sqrt\alpha\de_y \ff\|^2-2\nu\|\sqrt\alpha\de_{yy}\ff\|^2
-2 k\Re \l i \alpha u'\ff,\de_y\ff\r-2\nu\Re\l\alpha'\de_y\ff,\de_{yy}\ff\r\\
&-2|k|^2\|\sqrt\beta u'\ff\|^2-4\nu k^3\Re\l i \beta u'\ff,\de_y\ff\r+ 4\nu k\Re \l i \beta u'\de_{yy}\ff,\de_y\ff\r
+2\nu k\left[i\beta u'\ff\de_{yy}\ff\,\right]\Big|_{y=0}^1\\
&+2\nu k\Re\l i\beta u'''\ff,\de_y\ff\r+4\nu k \Re\l i\beta'u''\ff,\de_y\ff\r+2\nu k \Re \l i\beta''u'\ff,\de_y\ff\r\\ 
&-2\nu|k|^4\|\sqrt\gamma u'f\|^2-2\nu|k|^2\|\sqrt\gamma u'\de_y\ff\|^2-4\nu|k|^2\Re\l \gamma u'u''\ff,\de_y\ff\r\\
&-2\nu|k|^2\Re\l \gamma'u'\ff,u'\de_y\ff\r.
\end{aligned}
\end{equation}
We will explain how to treat with this term in Section \ref{sec:chan}.
\end{remark}

We now perform soft estimates on the right-hand-side of equation \eqref{eq:der} for the terms 
that do not have a definite sign (with the exception of the term containing $\alpha$, which 
will be treated in a different way in Lemma \ref{lem:err7}), leaving the more difficult and technical ones to Section 
\ref{sec:err}. We will make use of the $\eps$-Young inequality several times without explicit 
mention. In what follows, $C_0\geq 1$ is an absolute constant independent of $\nu$ and $k$.
Firstly, we have
$$
-2\nu\Re\l\alpha'\de_y\ff,\de_{yy}\ff\r\leq 2\nu\left\|\frac{\alpha'}{\sqrt\alpha}\de_y\ff\right\|\|\sqrt\alpha\de_{yy}\ff\|
\leq \frac\nu2\|\sqrt\alpha\de_{yy}\ff\|^2+ C_0\nu\left\|\frac{\alpha'}{\sqrt\alpha}\de_y\ff\right\|^2.
$$
Concerning the first two $\beta$ terms, we have
\begin{equation}\label{eq:hypoerr}
-4\nu k^3\Re\l i \beta u'\ff,\de_y\ff\r
\leq 4\nu |k|^3\left\|\frac{\beta}{\sqrt\alpha} u'\ff\right\|\|\sqrt\alpha\de_y\ff\|
\leq \nu|k|^2\|\sqrt\alpha\de_y \ff\|^2
+ C_0 \nu |k|^4\left\|\frac{\beta}{\sqrt\alpha} u'\ff\right\|^2
\end{equation}
and
$$
 4\nu k\Re \l i \beta u'\de_{yy}\ff,\de_y\ff\r
\leq 4\nu |k|\|\sqrt\alpha\de_{yy}\ff\|\left\|\frac{\beta}{\sqrt\alpha}u'\de_y\ff\right\|
\leq  \frac\nu2\|\sqrt\alpha\de_{yy} \ff\|^2
+C_0\nu|k|^2\left\|\frac{\beta}{\sqrt\alpha}u'\de_y\ff\right\|^2.
$$
Thus, by further restricting \eqref{eq:constr1} to 
\begin{equation}\label{eq:constr2}
\frac{\beta(y)^2}{\alpha(y)}\leq\frac{1}{2C_0} \gamma(y),\qquad  \forall y\in \T,
\end{equation}
we deduce from \eqref{eq:constr1} that
\begin{align*}
-4\nu k^3\Re\l i \beta u'\ff,\de_y\ff\r+4\nu k\Re \l i \beta u'\de_{yy}\ff,\de_y\ff\r
&\leq \nu|k|^2\|\sqrt\alpha\de_y \ff\|^2+\nu |k|^4\|\sqrt\gamma u'\ff\|^2\\
& \quad +\frac\nu2\|\sqrt\alpha\de_{yy} \ff\|^2
+\frac\nu2|k|^2\|\sqrt\gamma u'\de_y\ff\|^2.
\end{align*}
The other three $\beta$-terms can be estimated similarly
as
\begin{align*}
&2\nu k\Re\l i \beta u'''\ff,\de_y\ff\r\leq 2\nu |k|\|\beta u'''\ff\|\|\de_y\ff\|\leq\frac{\nu}{3}\|\de_y\ff\|^2
+C_0\nu|k|^2\|\beta u'''\ff\|^2,\\
&4\nu k \Re\l i\beta'u''\ff,\de_y\ff\r\leq 4\nu |k|\|\beta'u''\ff\|\|\de_y\ff\|\leq \frac{\nu}{3}\|\de_y\ff\|^2+
C_0\nu |k|^2\|\beta'u''\ff\|^2,\\
&2\nu k \Re \l i\beta''u'\ff,\de_y\ff\r\leq 2\nu |k| \| \beta''u'\ff\|\|\de_y\ff\|\leq \frac{\nu}{3}\|\de_y\ff\|^2+
C_0\nu |k|^2 \| \beta''u'\ff\|^2.
\end{align*}
Concerning the $\gamma$-terms, we have
\begin{align*}
-4\nu|k|^2\Re\l \gamma u'u''\ff,\de_y\ff\r
&\leq 4\nu|k|^2\|\sqrt\gamma u''\ff\|\|\sqrt\gamma u'\de_y\ff\|\\
&\leq \frac{\nu}{4} |k|^2\|\sqrt\gamma u'\de_y\ff\|^2
+C_0\nu|k|^2\|\sqrt\gamma u''\ff\|^2,
\end{align*}
and
\begin{align*}
-4\nu|k|^2\Re\l \gamma'u'\ff,u'\de_y\ff\r
&\leq 4\nu|k|^2\left\|\frac{\gamma'}{\sqrt\gamma }u'\ff\right\|\|\sqrt\gamma u'\de_y\ff\|\\
&\leq \frac{\nu}{4} |k|^2\|\sqrt\gamma u'\de_y\ff\|^2
+ C_0\nu|k|^2\left\|\frac{\gamma'}{\sqrt\gamma }u'\ff\right\|^2.
\end{align*}
We now collect all of the above estimates to obtain
\begin{align}
\ddt \Phi \leq &-\nu\|\de_y\ff\|^2-2 k\Re \l i \alpha u'\ff,\de_y\ff\r-2|k|^2\|\sqrt\beta u'\ff\|^2 \nonumber\\
&-2\nu |k|^2\|\ff\|^2-\nu|k|^2\|\sqrt\alpha\de_y \ff\|^2-\nu\|\sqrt\alpha\de_{yy}\ff\|^2
-\nu|k|^4\|\sqrt\gamma u'f\|^2-\nu|k|^2\|\sqrt\gamma u'\de_y\ff\|^2\nonumber\\
 & 
+C_0\Bigg[\nu\left\|\frac{\alpha'}{\sqrt\alpha}\de_y\ff\right\|^2
+ \nu|k|^2\|\beta u'''\ff\|^2
+ \nu |k|^2\|\beta'u''\ff\|^2
+ \nu |k|^2 \| \beta''u'\ff\|^2 \nonumber\\
&
\qquad+ \nu|k|^2\|\sqrt\gamma u''\ff\|^2
+ \nu|k|^2\left\|\frac{\gamma'}{\sqrt\gamma }u'\ff\right\|^2\Bigg] \label{eq:esti1}.
\end{align}
The purpose of the next sections is to provide suitable estimates for the positive terms in the right-hand-side
above. This will require the specification of the precise form of the functions $\alpha,\beta,\gamma$, depending
on the shape of the background flow $u$ and its critical points.

\subsection{Partition of unity}\label{sub:parti}
The dependence of $\alpha,\beta,\gamma$ with respect to $y$ is rather complicated and has to be specific for
each critical point $\bar y_i$, for $i=\{1,\ldots, N\}$. Set
$$
\delta=\min_{i\neq j} \frac{|\bar y_i-\bar y_j|}{8},
$$
and define, for $z\in \RR$, the real functions
$$
\theta(z)=
\begin{cases}
\e^{-1/z}, \quad &z>0,\\
0,\quad &z\leq 0,
\end{cases}
 \qquad \psi(z)=\frac{\theta(z)}{\theta(z)+\theta(1-z)}, 
 \qquad \phi(z)=\psi(z+2)\psi(2-z).
$$
It is not hard to check that $0\leq \phi\leq 1$, and that $\phi(z)=1$ for $|z|\leq 1$ and $\phi(z)=0$ for $|z|\geq 2$.
For each $i=\{1,\ldots, N\}$, define
\begin{align}
\widetilde\phi_i(y)= \phi\left(\frac{y-\bar y_i}{\delta}\right), \label{eq:phii}
\end{align}
where, abusing notation, $\phi$ indicates a proper restriction of the original function to $\T$.
We define
\begin{align*}
\widetilde\phi_0(y)=1-\sum_{i=1}^{N} \widetilde\phi_i(y),
\end{align*}
It is clear by construction that $\left\{\widetilde{\phi_i}\right\}_{i=0}^{N}$ form a partition
of unity. We summarize the properties in the following lemma, state without proof.
\begin{lemma}\label{lem:part1}
The collection of smooth functions $\{\widetilde\phi_i\}_{i=0}^{N}$ satisfies the following properties:
\begin{enumerate}
	\item $\spt(\widetilde\phi_i)\subset [\bar y_i-2\delta,\bar y_i+2\delta] \ $ for every $i\in \{1,\ldots, N\}$;
	\item $\spt (\widetilde\phi_i)\cap \spt (\widetilde\phi_j)=\emptyset\ $ for each $i\neq j\neq 0$;
	\item $\widetilde\phi_i(y)=1\ $  for every $y\in[\bar y_i-\delta,\bar y_i+\delta]$ and every $i\in \{1,\ldots, N\}$;
	\item $\displaystyle \sum_{i=0}^{N} \widetilde\phi_i(y) =1$, for every $y\in \T$.
\end{enumerate}
\end{lemma}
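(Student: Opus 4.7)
The plan is a routine verification that proceeds by first pinning down the properties of the scalar template $\phi$ and then transferring them to each $\widetilde\phi_i$ by the change of variables $y\mapsto (y-\bar y_i)/\delta$. The spacing $\delta=\min_{i\neq j}|\bar y_i-\bar y_j|/8$ is invoked exactly once, to obtain pairwise disjointness of the supports.

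First, I would check that $\phi\colon\RR\to[0,1]$ is smooth, identically one on $[-1,1]$, and compactly supported in $[-2,2]$. The function $\theta$ is the classical Cauchy bump: smooth and nonnegative, vanishing precisely on $(-\infty,0]$. Since $\theta(z)+\theta(1-z)>0$ for every $z\in\RR$, the quotient $\psi$ is smooth. One further observes $\psi(z)=0$ for $z\leq 0$ (as $\theta(z)=0$) and $\psi(z)=1$ for $z\geq 1$ (as $\theta(1-z)=0$). Hence $\phi(z)=\psi(z+2)\psi(2-z)$ vanishes whenever $|z|\geq 2$ (one of the factors is zero) and equals one whenever $|z|\leq 1$ (both factors are one), and is smooth with $0\leq\phi\leq 1$ on $\RR$.

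Properties (1) and (3) then follow immediately by rescaling from the definition \eqref{eq:phii}: $\widetilde\phi_i(y)=0$ unless $|y-\bar y_i|\leq 2\delta$, and $\widetilde\phi_i(y)=1$ whenever $|y-\bar y_i|\leq\delta$. For property (2), the triangle inequality does the work: if $y\in\spt\widetilde\phi_i$ and $i\neq j$ with $j\neq 0$, then
\begin{equation*}
|y-\bar y_j|\;\geq\; |\bar y_i-\bar y_j|-|y-\bar y_i|\;\geq\;8\delta-2\delta\;>\;2\delta,
\end{equation*}
so $y\notin\spt\widetilde\phi_j$. Property (4) is immediate from the definition $\widetilde\phi_0=1-\sum_{i=1}^{N}\widetilde\phi_i$; one should also record that $\widetilde\phi_0$ is smooth and takes values in $[0,1]$, which follows from property (2) together with $0\leq\widetilde\phi_i\leq 1$ (the disjoint supports force $\sum_{i=1}^N\widetilde\phi_i\leq 1$ pointwise).

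There is no genuine obstacle: the only point requiring a moment of care is the passage from $\RR$ to $\T$ flagged by the ``abuse of notation'' remark before the lemma. One needs to confirm that the bumps $\widetilde\phi_i$, defined by periodic extension/restriction of $\phi$, do not wrap around and overlap through the identification. This is automatic because $\delta$ is one-eighth of a minimum of distances measured on $\T$, so $4\delta$ is strictly less than the diameter of $\T$; all arguments above are then preserved verbatim since the distances and supports are measured in $\T$ itself.
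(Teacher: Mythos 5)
Your verification is correct and is exactly the routine argument the paper has in mind: the lemma is stated there without proof, on the grounds that it is a standard construction of a partition of unity from a fixed bump, and your checks of the template $\phi$ (smoothness, $\phi\equiv 1$ on $[-1,1]$, $\spt\phi\subset[-2,2]$), the rescaling, and the triangle-inequality disjointness via $8\delta-2\delta>2\delta$ are precisely the content being omitted. The extra remarks you add — that $0\le\widetilde\phi_0\le1$ follows from the disjointness, and that the periodic identification causes no overlap since $4\delta$ is small relative to the circumference of $\T$ — are worthwhile sanity checks and consistent with the paper's ``abuse of notation'' caveat.
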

It will also be convenient to group together the critical points that have the same order: to this end,
for each $j\in \{1,\ldots,n_0\}$, let
\begin{equation}\label{eq:tildephi}
\phi_j(y)=\sum_{i\in E_j} \widetilde\phi_i(y),
\end{equation}
where $E_j$ denotes the set of all indices $i$ such that
$$
u^{(\ell)}(\bar y_i)=0,\quad \forall \ell=1,\ldots, j , \qquad u^{(j+1)}(\bar y_i)\neq 0.
$$
When $j=0$, simply put $\phi_0=\widetilde\phi_0$. The properties of $\{\widetilde\phi_i\}_{i=0}^{N}$
transfer naturally to similar ones for $\{\phi_j\}_{j=0}^{n_0}$.

\begin{lemma}\label{lem:part2}
The collection of smooth functions $\{\phi_j\}_{j=0}^{n_0}$ satisfies the following properties:
\begin{enumerate}
	\item $\displaystyle\spt(\phi_j)\subset \bigcup [\bar y_i-2\delta,\bar y_i+2\delta] \ $ for every $j\in \{1,\ldots, n_0\}$,
	where the (disjoint) union is take over all critical points with the same order of vanishing of $u'$;
	\item $\spt (\phi_i)\cap \spt (\phi_j)=\emptyset\ $ for each $i\neq j\neq 0$;
	\item $\phi_j(y)=1\ $  for every $y\in\bigcup [\bar y_i-\delta,\bar y_i+\delta]$ and every $j\in \{1,\ldots, n_0\}$, 
	where the (disjoint) union is take over all critical points with the same order of vanishing of $u'$;
	\item $\displaystyle \sum_{j=0}^{n_0} \phi_j(y) =1$;
	\item for every $\varsigma\in(0,1)$, there exists a constant $C_\varsigma>0$ such that
	\begin{equation}\label{eq:part}
	|\phi_j'(y)|\leq C_\varsigma|\phi_j(y)|^{1-\varsigma},\qquad \forall y\in\T,
	\end{equation}
	for every $j=0,\ldots, n_0$.
\end{enumerate}
\end{lemma}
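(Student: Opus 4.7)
I plan to derive items (1)--(4) directly from the definition $\phi_j=\sum_{i\in E_j}\widetilde\phi_i$ together with the corresponding items of Lemma \ref{lem:part1}. Items (1) and (2) follow from the pairwise disjointness of $\{\spt\widetilde\phi_i\}_{i\geq 1}$ in Lemma \ref{lem:part1}(2) combined with the fact that $E_i\cap E_j=\emptyset$ for $i\neq j$. For (3), on each interval $[\bar y_i-\delta,\bar y_i+\delta]$ only the single summand $\widetilde\phi_i$ contributes to $\phi_j$ and it equals one. Item (4) is Lemma \ref{lem:part1}(4) reindexed via the partition $\{1,\dots,N\}=\bigsqcup_{j=1}^{n_0}E_j$ together with $\phi_0=\widetilde\phi_0$.

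The heart of the lemma is the derivative estimate (5). My strategy is to trace the inequality back to the scalar building block $\theta(z)=\e^{-1/z}$, for which $\theta'(z)=z^{-2}\theta(z)$ and hence for every $\varsigma\in(0,1)$
$$
\frac{|\theta'(z)|}{\theta(z)^{1-\varsigma}}=\frac{\e^{-\varsigma/z}}{z^2},\qquad z>0,
$$
is bounded on $(0,\infty)$ (and in fact decays to $0$ at both endpoints). Combined with the symmetry $1-\psi(z)=\psi(1-z)$, which shows that $1-\psi$ has the same $\theta$-based structure as $\psi$, the quotient rule applied to $\psi$ and the product rule applied to $\phi(z)=\psi(z+2)\psi(2-z)$ propagate this bound to
$$
|\phi'(z)|\leq C_\varsigma\,\phi(z)^{1-\varsigma},\qquad |\phi'(z)|\leq C_\varsigma\,(1-\phi(z))^{1-\varsigma}.
$$
The affine rescaling in \eqref{eq:phii} then yields, with $\delta$ fixed and absorbed into $C_\varsigma$,
$$
|\widetilde\phi_i'(y)|\leq C_\varsigma\widetilde\phi_i(y)^{1-\varsigma},\qquad |\widetilde\phi_i'(y)|\leq C_\varsigma(1-\widetilde\phi_i(y))^{1-\varsigma}.
$$

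With these twin inequalities in hand, (5) follows by case analysis. For $j\geq 1$, disjointness of $\{\spt\widetilde\phi_i\}_{i\in E_j}$ ensures that at each $y$ at most one summand of $\phi_j$ is nonzero, so $\phi_j(y)=\widetilde\phi_i(y)$ and $\phi_j'(y)=\widetilde\phi_i'(y)$ for the relevant $i$, and the first inequality applies directly. For $j=0$, outside $\bigcup_{i\geq 1}\spt\widetilde\phi_i$ one has $\phi_0\equiv 1$ and $\phi_0'\equiv 0$, while inside the support of a (unique) $\widetilde\phi_i$ one has $\phi_0(y)=1-\widetilde\phi_i(y)$, so the second inequality gives $|\phi_0'(y)|=|\widetilde\phi_i'(y)|\leq C_\varsigma(1-\widetilde\phi_i(y))^{1-\varsigma}=C_\varsigma\phi_0(y)^{1-\varsigma}$.

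The only slightly delicate step is checking that the baseline estimate on $\theta$ survives the quotient and product manipulations without degenerating at the endpoints of the support. The saving grace is the infinite-order flatness of $\theta$ at $0$: both $\theta(z)$ and $\theta(1-z)$, and any rational-and-product expression built from them, vanish faster than any power at their respective zeros, so division by a $(1-\varsigma)$-power leaves a factor that still decays to zero. This is the only bookkeeping that requires care; no analytic subtlety beyond it is involved.
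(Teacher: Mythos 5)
Your proof is correct and follows essentially the same route the paper sketches: trace the estimate back to $\theta'(z)=z^{-2}\theta(z)$ (so $|\theta'|/\theta^{1-\varsigma}=z^{-2}\e^{-\varsigma/z}$ is bounded), use the symmetry $\psi'(z)=\psi'(1-z)$ with $1-\psi(z)=\psi(1-z)$, transfer to $\phi$ by the product rule, then to $\widetilde\phi_i$ by rescaling and to $\phi_j$ by local disjointness of supports. Your explicit formulation of the companion bound $|\phi'|\leq C_\varsigma(1-\phi)^{1-\varsigma}$, and its role in handling $\phi_0=1-\widetilde\phi_i$ on $\spt\widetilde\phi_i$, supplies a detail that the paper's phrase ``the estimate \dots transfers to $\phi$ as well, and, in turn, to each $\phi_j$'' leaves implicit, but it is the same mechanism, not a different proof.
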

The last property \eqref{eq:part} is not hard to verify. It certainly holds for $\theta(z)$, whenever $z\in (0,1)$, and for
$\psi$ as well, since
$$
\psi'(z)=\left[\frac{1}{z^2}+\frac{1}{(1-z)^2}\right]\psi(z)\psi(1-z)=\psi'(1-z), \qquad z\in(0,1),
$$
and $\psi(z)\sim \theta(z)$ for $z$ near the origin. Since $\phi$ is essentially defined piecewise in terms of $\psi$,
the estimate \eqref{eq:part} transfers to $\phi$ as well, and, in turn, to each $\phi_j$.

\subsection{Spectral gap estimates} \label{sec:Spec}
One of the main lemmas required to prove our results is a sharp, localized spectral gap-type inequality, 
adapted to the partition of unity defined above.
The fact that it is localized is crucial for controlling the error terms in \eqref{eq:esti1}.

\begin{proposition} \label{LocSpec}
Let $u$ satisfy the hypotheses of Theorem \ref{thm:maindecay} and let $\phi_j$ be as defined above in \eqref{eq:tildephi}.  
The following estimate holds for all $\sigma > 0$ and $f:\T \rightarrow \Complex$ (denoting $f_j = f \sqrt{\phi_j}$),
\begin{align}\label{eq:LocSpec}
\sigma^{\frac{j}{j+1}} \norm{f_j}^2 \lesssim \sigma \norm{\partial_y f_j}^2 + \norm{u' f_j}^2. 
\end{align}
\end{proposition}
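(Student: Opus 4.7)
}
The plan is to split into the cases $j=0$ and $j\ge 1$, reduce the latter to a single critical point, and then prove a model one‐dimensional inequality of anharmonic-oscillator type by an elementary length-scale optimization.

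For $j=0$: by construction $\spt(\phi_0)=\spt(\widetilde\phi_0)$ stays a distance at least $\delta$ away from every critical point, so $|u'(y)|\ge c>0$ on $\spt(\phi_0)$ for some $c$ depending only on $u$. Since the exponent $\sigma^{j/(j+1)}=1$ in this case, the estimate reduces to $\|f_0\|^2\lesssim \sigma\|\partial_y f_0\|^2+\|u'f_0\|^2$, which follows at once from $\|u'f_0\|^2\ge c^2\|f_0\|^2$.

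For $j\ge 1$: by Lemma \ref{lem:part1}, the functions $\{\widetilde\phi_i\}_{i\in E_j}$ appearing in the sum \eqref{eq:tildephi} have pairwise disjoint supports, so $\|f_j\|^2=\sum_{i\in E_j}\|f\sqrt{\widetilde\phi_i}\|^2$, and similarly for $\|\partial_y f_j\|^2$ and $\|u'f_j\|^2$. It therefore suffices to prove \eqref{eq:LocSpec} for a single $g:=f\sqrt{\widetilde\phi_i}$ supported in $I_i=[\bar y_i-2\delta,\bar y_i+2\delta]$, where $i\in E_j$. Because $u'(\bar y_i)=\dots=u^{(j)}(\bar y_i)=0$ and $u^{(j+1)}(\bar y_i)\neq 0$, Taylor's theorem gives, after shrinking $\delta$ if necessary, a constant $c_i>0$ with $|u'(y)|\ge c_i|y-\bar y_i|^{j}$ for all $y\in I_i$. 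Thus \eqref{eq:LocSpec} is reduced to the model inequality
\begin{equation}\label{eq:model}
\sigma^{\frac{j}{j+1}}\|g\|^2\;\lesssim\;\sigma\|g'\|^2+\|(\cdot-\bar y_i)^{j}g\|^2,
\end{equation}
for $g$ supported in $I_i$.

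To prove \eqref{eq:model} (WLOG $\bar y_i=0$), introduce the natural length scale $\ell:=\sigma^{1/(2(j+1))}$ and split $\|g\|^2=\int_{|y|\le\ell}|g|^2+\int_{|y|>\ell}|g|^2$. The outer piece is trivially controlled by $\ell^{-2j}\|y^{j}g\|^2$. For the inner piece I use the standard trace bound
\[
|g(y_1)|^2\;\le\;2|g(y_2)|^2+2|y_1-y_2|\|g'\|^2,\qquad y_1,y_2\in I_i,
\]
average $y_2$ over $[\ell,2\ell]$ and integrate $y_1$ over $[-\ell,\ell]$ to obtain
\[
\int_{-\ell}^{\ell}|g|^2\;\lesssim\;\int_{\ell}^{2\ell}|g|^2+\ell^2\|g'\|^2\;\lesssim\;\ell^{-2j}\|y^{j}g\|^2+\ell^2\|g'\|^2.
\]
Summing the two pieces and inserting $\ell^{-2j}=\sigma^{-j/(j+1)}$ and $\ell^2=\sigma\cdot\sigma^{-j/(j+1)}$ yields $\|g\|^2\lesssim \sigma^{-j/(j+1)}(\|y^{j}g\|^2+\sigma\|g'\|^2)$, which is \eqref{eq:model}.

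The main subtlety is choosing the length scale $\ell$ so that the two error terms balance at the correct power $\sigma^{j/(j+1)}$; this balance is exactly the one dictated by the rescaling $y\mapsto\sigma^{1/(2(j+1))}z$ that conjugates $-\sigma\partial_y^2+y^{2j}$ to $\sigma^{j/(j+1)}(-\partial_z^2+z^{2j})$, whose ground state energy is an order-one constant. Everything else (the reduction via the partition of unity and the Taylor lower bound on $u'$) is routine, and no cancellation between different $\widetilde\phi_i$ is needed thanks to their disjoint supports.
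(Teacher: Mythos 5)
Your proof is correct, and the overall structure (handle $j=0$ via the lower bound on $u'$, decompose $j\ge 1$ into individual critical points using disjoint supports, lower-bound $|u'|$ by $c_i|y-\bar y_i|^j$ via Taylor, then prove a model anharmonic-oscillator inequality) coincides with the paper's. The genuine difference is in how the model inequality is established. The paper proves the \emph{unscaled} estimate $\|\partial_z g\|^2 + c^2\|z^j g\|^2 \ge b\|g\|^2$ on $L^2(\Real)$ by citing that $-\partial_{zz}+c^2z^{2j}$ has compact inverse and hence a spectral gap, and then obtains \eqref{eq:LocSpec} by the rescaling $y-\bar y_i = \lambda^{-1}z$ with $\lambda^{-2-2j}=\sigma$. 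You instead prove the rescaled inequality directly by an elementary length-scale splitting at $\ell=\sigma^{1/(2(j+1))}$, controlling the outer region by $\ell^{-2j}\|y^jg\|^2$ and the inner region by a trace bound plus averaging; your choice of $\ell$ is exactly the paper's rescaling factor, but you never invoke spectral theory, so the argument is fully self-contained with explicit constants. The trade-off is standard: the paper's route is shorter and generalizes immediately (any potential whose operator has compact resolvent), while yours is more elementary and transparent about where the exponent $j/(j+1)$ comes from. One small point to make explicit: your averaging of $y_2$ over $[\ell,2\ell]$ implicitly treats $g$ as extended by zero to $\Real$ (which is legitimate since $g=f\sqrt{\widetilde\phi_i}\in H^1$ has compact support); this is needed so the argument covers $\sigma$ large, where $[\ell,2\ell]$ leaves the support of $\widetilde\phi_i$, and it is also what makes $\int_\ell^{2\ell}|g|^2\le \ell^{-2j}\|y^jg\|^2$ valid without a boundary term. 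With that clarification the proof is complete for all $\sigma>0$ as required.
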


\begin{proof}[Proof of Proposition \ref{LocSpec}] 
The proof is a variant of arguments in e.g. \cite{GallagherGallayNier2009}. 
First, by the definition of $\phi_0$ it is clear there holds 
\begin{align*}
\sigma \norm{\partial_y f_0}^2 + \norm{u^\prime f_0}^2 \gtrsim \norm{f_0}^2, 
\end{align*}
as $u^\prime$ is bounded below by a strictly positive number on the support of $\phi_0$. This completes the case $j = 0$.  

Next, for each $j\geq 1$ write $\phi_j = \sum_{i} \widetilde\phi_{i}$ as in \eqref{eq:tildephi}.
As the $\widetilde{\phi_{i}}$ form a partition of unity and the supports of the $\widetilde{\phi_i}$ are disjoint (for $j \geq 1$), it follows that (denoting $\widetilde f_i = f \sqrt{\widetilde\phi_{i}}$) 
\begin{align}
\sigma \norm{\partial_y f_j}^2 + \norm{u' f_j}^2 =  \sum_{i}\sigma \norm{\partial_y \widetilde f_i}^2 + \norm{u' \widetilde f_i}^2, \label{eq:LocSpecDecomp}
\end{align}
where we used that each $\widetilde f_i$ are disjoint, since $j\geq 1$.
Hence, it will suffice to prove the inequality $i$-by-$i$. 

Suppose that $\bar y_i$ is the unique critical point in the support of $\widetilde \phi_i$. Then there is a constant $c_i$ such that 
\begin{align*}
\left(u^\prime(y)\right)^2 \geq c_i^2(y-\bar y_i)^{2 j}, 
\end{align*} 
on the support of $\widetilde \phi_i$. We first claim that there exists a $b_{ij} > 0$ such that
\begin{align}\label{eq:standard}
\norm{\partial_z g}^2 + c_i^2\norm{z^{j} g}^2 \geq b_{ij}\norm{g}^2. 
\end{align}
To see this, notice that the operator $-\de_{zz}+ c_i^2 z^{2j}$ is unbounded on $L^2(\Real)$ and has a compact inverse.
By standard spectral theory \cite{RS80-1}, \eqref{eq:standard} then follows from the spectral gap.
Now, make the re-scaling
\begin{align*} 
(y - \bar y_i) & = \lambda^{-1} z \\ 
g(z) & = \widetilde f_i(\lambda^{-1} z +\bar y_i).  
\end{align*}
Then, 
\begin{align*}
\lambda^{-2} \norm{\partial_y \widetilde f_i}^2 + \lambda^{2j}\norm{c_i (y-\bar y_i)^{j} \widetilde f_i}^2 & \geq b_{ij}\norm{\widetilde f_i}^2 \\ 
\lambda^{2j} \left(\lambda^{-2-2j}\norm{\partial_y \widetilde f_i}^2 + \norm{c_i (y-\bar y_i)^{j} \widetilde f_i}^2\right) & \geq b_{ij}\norm{\widetilde f_i}^2. 
\end{align*}
Making the choice
\begin{align*}
\lambda^{-2-2j} = \sigma,
\end{align*}
implies the desired estimate: 
\begin{align*}
\sigma\|\partial_y \widetilde f_i\|^2 + \|c_i (y-\bar y_i)^{j} \widetilde f_i\|^2 & \geq b_{ij}\sigma^{\frac{j}{j+1}} \|\widetilde f_i\|^2. 
\end{align*}
From \eqref{eq:LocSpecDecomp}, the desired estimate now follows. 
\end{proof} 

\begin{remark}[Global spectral gap] \label{GlobalSpec}
From Proposition \ref{LocSpec}, it is not hard to see that a global version of the spectral-gap inequality holds. Precisely,
let $u$ satisfy the hypotheses of Theorem \ref{thm:maindecay}. 
Then, the following estimate holds for all $\sigma$ sufficiently small and $f:\T \rightarrow \Complex$,  
\begin{align*}
\sigma^{\frac{n_0}{n_0+1}}\norm{f}^2 \lesssim \sigma \norm{\partial_y f}^2 + \norm{u' f}^2. 
\end{align*}
Indeed, as $\set{\phi_j}_j$ defines a partition of unity, for some $C$ (depending only on $u$ through the definition of $\phi_j$ above), there holds   
\begin{align*}
\sigma \norm{\partial_y f}^2 + \norm{u^\prime f}^2 & = \sum_j \sigma \norm{\partial_y f_j}^2 + \norm{u^\prime f_j}^2 - \sigma \norm{\partial_y \phi_j f }^2 - 2\sigma \l\partial_y \phi_j f, \partial_y f_j \r \\ 
& \geq \sum_j\left(\frac{1}{2}\sigma \norm{\partial_y f_j}^2 + \norm{u^\prime f_j}^2\right) - C\sigma\norm{f}^2. 
\end{align*}  
Therefore, by Proposition \ref{LocSpec}, there is another constant $c$ such that 
\begin{align*}
\sigma \norm{\partial_y f}^2 + \norm{u^\prime f}^2 & \geq \sum_j c\sigma^{\frac{j}{j+1}} \norm{f_j}^2 - C\sigma\norm{f}^2 \\ 
& \geq c\sigma^{\frac{n_0}{n_0+1}} \norm{f}^2 - C\sigma\norm{f}^2. 
\end{align*}
The claim then follows for $\sigma$ sufficiently small. 
\end{remark}

\section{Estimates on the error terms}\label{sec:err}
We collect in this section all the estimates needed to properly control the right-hand-side of \eqref{eq:esti1}.
To achieve this, we give a precise expression of the functions $\alpha,\beta,\gamma$ below, which will
have to take into account the different nature of the various critical points of $u$. The error terms in \eqref{eq:esti1}
will be divided in two categories. In Section \ref{sub:primaryerr} we analyze the two main error terms (those
not containing derivatives of $\alpha,\beta,\gamma$), which require a sharp use of the spectral gap-type
inequalities of Proposition \ref{LocSpec}. Section \ref{sub:technicalerr} contains the purely technical estimates, mainly
of the terms containing $\alpha',\beta'$ and $\gamma'$. On the support of these function, $|u'|$ is bounded below
away from zero, and the analysis is somewhat simplified. 

\subsection{Choice of $\alpha,\beta,\gamma$} \label{sec:abgdef}
For the functions $\alpha,\beta,\gamma$ we use the partition of unity of Section \ref{sub:parti} and define
\begin{align*}
\alpha(y)=\sum_{j=0}^{n_0}\eps_{\alpha,j}\alpha_j\phi_j(y), \qquad 
\beta(y)=\sum_{j=0}^{n_0}\eps_{\beta,j}\beta_j \phi_j(y), \qquad 
\gamma(y)=\sum_{j=0}^{n_0}\eps_{\gamma,j}\gamma_j\phi_j(y),
\end{align*}
where
\begin{align}\label{eq:param}
\alpha_j= \frac{\nu^\frac{2}{j+3}}{|k|^\frac{2}{j+3}}, \qquad 
\beta_j= \frac{\nu^\frac{1-j}{j+3}}{|k|^\frac{4}{j+3}}, \qquad 
\gamma_j= \frac{\nu^{-\frac{2j}{j+3}}}{|k|^\frac{6}{j+3}},
\end{align}
and $\eps_{\alpha,j},\eps_{\beta,j},\eps_{\gamma,j}>0$ are small parameters, \emph{independent} of $\nu$ and $k$, 
which will be chosen in Section \ref{sub:mainproof} in order to satisfy the constraints given by \eqref{eq:constr1}-\eqref{eq:constr2} and the other ones derived below. We only preliminary note here that
\begin{equation}\label{eq:constr3}
\beta_j^2=\alpha_j\gamma_j,\qquad \forall j\in\{0,\ldots,n_0\},
\end{equation}
in close analogy with what is prescribed by \eqref{eq:constr1}.
In particular, we will \emph{first} choose $\eps_{\alpha,j},\eps_{\beta,j},\eps_{\gamma,j}>0$ small relative to constants depending only on $u$ and \emph{then} choose the parameter $\kappa_0$
in the restriction $\nu \abs{k}^{-1} \leq \kappa_0$ in the statement of Theorem \ref{thm:maindecay}.  
Notice that 
\begin{align}\label{eq:abgmono} 
\frac{\alpha_j}{\alpha_i}  = \left(\frac{\nu}{\abs{k}}\right)^{\frac{2(i-j)}{(i+3)(j+3)}}, \qquad \frac{\beta_j}{\beta_i}  = \left(\frac{\nu}{\abs{k}}\right)^{\frac{4(i-j)}{(i+3)(j+3)}}, \qquad \frac{\gamma_j}{\gamma_i}  = \left(\frac{\nu}{\abs{k}}\right)^{\frac{6(i-j)}{(i+3)(j+3)}},
\end{align}
which implies that for $\nu \abs{k}^{-1}$ small, $\alpha_{j+1} \gg \alpha_j$, $\gamma_{j+1} \gg \gamma_j$, and $\beta_{j+1} \gg \beta_j$ for all $j$. 
The following result will be used several times below.

\begin{lemma}\label{lem:unwritten}
There exists $\kappa_0 \in (0,1)$ such that  if $\nu|k|^{-1}\leq \kappa_0$ we have (denoting $\ff_j = \ff \sqrt{\phi_j}$), 
\begin{equation}\label{eq:spectrga}
|k|^\frac{2}{j+1}\beta_j^\frac{1}{j+1}\eps_{\beta,j}^\frac{1}{j+1}\nu^\frac{j}{j+1}\|\ff_j\|^2\lesssim \nu \|\de_y\ff_j\|^2+|k|^2\beta_j\eps_{\beta,j}\|u'\ff_j\|^2,\qquad \forall j=0,\ldots n_0
\end{equation}
and
\begin{align}\label{eq:summin}
\sum_{j=0}^{n_0}\left[\nu\|\de_y\ff_j\|^2+|k|^2\eps_{\beta,j}\beta_j\|u'\ff_j\|^2\right]\lesssim \nu\|\de_y\ff\|^2+|k|^2\|\sqrt\beta u'\ff\|^2
\end{align}
\end{lemma}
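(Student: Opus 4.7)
The plan is to handle the two estimates separately: \eqref{eq:spectrga} is a direct rescaling of the localized spectral gap Proposition \ref{LocSpec}, while \eqref{eq:summin} requires combining the partition-of-unity structure with the global spectral gap in Remark \ref{GlobalSpec} to absorb an $\|\ff\|^2$ remainder.

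For part one, I would apply Proposition \ref{LocSpec} to $\ff_j$ with the particular choice $\sigma = \nu/(|k|^2 \eps_{\beta,j}\beta_j) > 0$. Multiplying through by $|k|^2 \eps_{\beta,j}\beta_j$ the right-hand side becomes exactly $\nu\|\de_y\ff_j\|^2 + |k|^2\eps_{\beta,j}\beta_j\|u'\ff_j\|^2$, while the prefactor on the left simplifies as $|k|^2\eps_{\beta,j}\beta_j \cdot \sigma^{j/(j+1)} = (|k|^2\eps_{\beta,j}\beta_j)^{1/(j+1)}\nu^{j/(j+1)} = |k|^{2/(j+1)}\beta_j^{1/(j+1)}\eps_{\beta,j}^{1/(j+1)}\nu^{j/(j+1)}$, matching the stated left-hand side. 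This is entirely algebraic.

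For part two, the $u'$-term is an identity: since $\beta(y)=\sum_j \eps_{\beta,j}\beta_j\phi_j(y)$ and $\ff_j^2 = \phi_j\ff^2$, one has $\sum_j |k|^2 \eps_{\beta,j}\beta_j\|u'\ff_j\|^2 = |k|^2\|\sqrt\beta u'\ff\|^2$. For the derivative term I would expand $\de_y\ff_j = \sqrt{\phi_j}\de_y\ff + \tfrac{\phi_j'}{2\sqrt{\phi_j}}\ff$ and square. When summing over $j$, the main term collapses via $\sum_j\phi_j=1$ to $\|\de_y\ff\|^2$, and the cross term vanishes since $\sum_j \phi_j' = (\sum_j\phi_j)' = 0$. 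The leftover is
\begin{equation*}
\sum_{j=0}^{n_0}\|\de_y\ff_j\|^2 = \|\de_y\ff\|^2 + \int_\T \sum_{j=0}^{n_0}\frac{(\phi_j'(y))^2}{4\phi_j(y)}|\ff(y)|^2\,\d y,
\end{equation*}
and invoking Lemma \ref{lem:part2}(5) with any $\varsigma\in(0,1/2)$ bounds $(\phi_j')^2/\phi_j \leq C_\varsigma \phi_j^{1-2\varsigma}\leq C$, so the remainder is $\leq C\|\ff\|^2$.

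It then remains to show $\nu\|\ff\|^2 \lesssim \nu\|\de_y\ff\|^2 + |k|^2\|\sqrt\beta u'\ff\|^2$, which is the main technical point. By Remark \ref{GlobalSpec}, for any small $\sigma>0$, $\|\ff\|^2 \lesssim \sigma^{1/(n_0+1)}\|\de_y\ff\|^2 + \sigma^{-n_0/(n_0+1)}\|u'\ff\|^2$. Choosing $\sigma = (\nu|k|^{-1})^{2(n_0+1)/(3n_0)}$ (small once $\kappa_0$ is small enough relative to $\eps_{\beta,0}$) gives
\begin{equation*}
\nu\|\ff\|^2 \lesssim \nu\bigl(\nu|k|^{-1}\bigr)^{2/(3n_0)}\|\de_y\ff\|^2 + \nu^{1/3}|k|^{2/3}\|u'\ff\|^2 \lesssim \nu\|\de_y\ff\|^2 + \nu^{1/3}|k|^{2/3}\|u'\ff\|^2.
\end{equation*}
From \eqref{eq:abgmono} and $\nu|k|^{-1}\leq\kappa_0\ll 1$, the minimum of the $\beta_j$'s is $\beta_0=\nu^{1/3}|k|^{-4/3}$, so (after possibly further shrinking $\kappa_0$ to dominate the $\eps_{\beta,j}$ factors) $\beta(y) \geq \eps_{\beta,0}\beta_0$ pointwise, giving $|k|^2\|\sqrt\beta u'\ff\|^2 \geq \eps_{\beta,0}\nu^{1/3}|k|^{2/3}\|u'\ff\|^2$ and closing the estimate with a constant depending on $u$ and $\eps_{\beta,0}$.

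The only delicate step is the last one: matching exponents in the global spectral gap to the specific scaling of $\beta_0$. Everything else is mechanical bookkeeping with the partition of unity.
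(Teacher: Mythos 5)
Your proof of \eqref{eq:spectrga} agrees with the paper's (same choice $\sigma = \nu/(|k|^2\eps_{\beta,j}\beta_j)$ in Proposition \ref{LocSpec}, same algebra). For \eqref{eq:summin}, your use of the exact identity $\sum_j \|\de_y \ff_j\|^2 = \|\de_y\ff\|^2 + \int_\T \sum_j \tfrac{(\phi_j')^2}{4\phi_j}|\ff|^2\,\d y$ (via $\sum_j \phi_j' = 0$) is a nice observation --- cleaner than the paper's Young-inequality decomposition, and the $u'$-part being an identity is correct. Both approaches reduce the task to absorbing a remainder of order $C\nu\|\ff\|^2$.

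The gap is in how you absorb that remainder. You invoke Remark \ref{GlobalSpec} with $\sigma = (\nu|k|^{-1})^{2(n_0+1)/(3n_0)}$ and then pass from $\|u'\ff\|^2$ to $|k|^2\|\sqrt{\beta}u'\ff\|^2$ via the crude pointwise bound $\beta(y)\geq\eps_{\beta,0}\beta_0$. This forces the implicit constant in \eqref{eq:summin} to carry a factor of $\eps_{\beta,0}^{-1}$, as you yourself note. That dependence is a real defect here: the $\eps_{\beta,j}$ are free parameters that will subsequently be chosen \emph{small} (see the constraints \eqref{eq:con2} and the proof of Lemma \ref{lem:err2}, where one sums over $j$, invokes this lemma, and then sends $\eps_{\beta,j}\to 0$ to beat $1/(14C_0)$). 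For example, in Lemma \ref{lem:err2} the $j=0$ contribution carries a prefactor $\eps_{\beta,0}$; multiplying by your $C/\eps_{\beta,0}$ in \eqref{eq:summin} yields $O(1)$, which cannot be made small by shrinking $\eps_{\beta,0}$, so the downstream estimate would not close without retaining $(\nu|k|^{-1})$-factors and imposing extra order constraints among the $\eps$'s.

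The paper avoids this by \emph{not} passing through the global spectral gap at all: after arriving at $\nu\|\de_y\ff\|^2 + |k|^2\|\sqrt{\beta}u'\ff\|^2 \geq \sum_j[\tfrac{\nu}{2}\|\de_y\ff_j\|^2 + |k|^2\eps_{\beta,j}\beta_j\|u'\ff_j\|^2] - C\nu\|\ff\|^2$, it keeps half of the bracketed sum and applies the already-proved, localized inequality \eqref{eq:spectrga} to the other half, $j$-by-$j$. Since the $j$-th piece of \eqref{eq:spectrga} already contains the correctly weighted $\eps_{\beta,j}\beta_j\|u'\ff_j\|^2$, there is no mismatch with $\|\sqrt{\beta}u'\ff\|^2$ and hence no loss of $\eps_{\beta,0}^{-1}$; one obtains a positive multiple of $\min_j \nu^{\frac{j+1}{j+3}}|k|^{\frac{2}{j+3}}\eps_{\beta,j}^{\frac{1}{j+1}}\|\ff\|^2$, which dominates $C\nu\|\ff\|^2$ once $\kappa_0$ (chosen \emph{after} the $\eps$'s) is small, and the implicit constant in \eqref{eq:summin} is an absolute one (essentially $4$). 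Alternatively, your route can be repaired by choosing $\sigma$ adaptively, namely $\sigma = \eps_{\beta,0}^{-(n_0+1)/n_0}(\nu|k|^{-1})^{2(n_0+1)/(3n_0)}$, which cancels the $\eps_{\beta,0}^{-1}$ at the expense of an additional smallness condition on $\kappa_0$ relative to $\eps_{\beta,0}$, also acceptable by the stated order of choices. As written, however, the proposal proves a weaker statement than what the paper needs.
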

Notice that the reverse inequality of \eqref{eq:summin} follows trivially  from the triangle inequality.

\begin{proof}
Inequality \eqref{eq:spectrga} follows from \eqref{eq:LocSpec} by appropriately choosing $\sigma$ as
\begin{equation}\label{eq:spectrga2}
\sigma=\frac{\nu}{|k|^2\beta_j\eps_{\beta,j}}=\frac{1}{\eps_{\beta,j}} \left[\nu |k|^{-1}\right]^\frac{2(j+1)}{j+3}. 
\end{equation}
We now turn to \eqref{eq:summin}. Notice that as in Remark \ref{GlobalSpec} we have
\begin{align*}
\nu\|\de_y \ff\|^2+|k|^2\|\sqrt\beta u'\ff\|^2&=\sum_{j=0}^{n_0}\left[\nu\|\de_y\ff_j\|^2+|k|^2\eps_{\beta,j}\beta_j\|u'\ff_j\|^2
-\nu \|\de_y\sqrt{\phi_j}\ff\|^2-2\nu\l \de_y\sqrt{\phi_j}\ff,\de_y\ff_j\r\right]\\
&\geq \sum_{j=0}^{n_0}\left[\frac{\nu}{2}\|\de_y\ff_j\|^2+|k|^2\eps_{\beta,j}\beta_j\|u'\ff_j\|^2\right]
-C\nu\|\ff\|^2.
\end{align*}
By using  \eqref{eq:spectrga},
\begin{align*}
\nu\|\de_y \ff\|^2+|k|^2\|\sqrt\beta u'\ff\|^2
&\geq \frac12\sum_{j=0}^{n_0}\left[\frac{\nu}{2}\|\de_y\ff_j\|^2+|k|^2\eps_{\beta,j}\beta_j\|u'\ff_j\|^2\right] \\ 
& \quad +c\sum_{j=0}^{n_0}\left[\nu^{\frac{j+1}{j+3}}|k|^\frac{2}{j+3} \eps_{\beta,j}^2\|\ff_j\|^2\right] - C\nu \| \ff \|^2\\
&\geq \frac12\sum_{j=0}^{n_0}\left[\frac{\nu}{2}\|\de_y\ff_j\|^2+|k|^2\eps_{\beta,j}\beta_j\|u'\ff_j\|^2\right]+\left[c\nu^{\frac{n_0+1}{n_0+3}}|k|^\frac{2}{n_0+3} -C\nu\right]\|\ff\|^2.
\end{align*}
The result follows by choosing $\kappa_0$ sufficiently small (as in Remark \ref{GlobalSpec}), since
$$
(\nu|k|^{-1})^\frac{2}{n_0+3}\ll 1 \qquad \Rightarrow \qquad  c\nu^{\frac{n_0+1}{n_0+3}}|k|^\frac{2}{n_0+3} -C\nu\geq 0,
$$
and the proof is over.
\end{proof}

\subsection{Primary error terms}\label{sub:primaryerr}
The hardest error terms to control are given by
$$
\nu|k|^2\|\beta u'''\ff\|^2 \qquad\text{and}\qquad\nu|k|^2\|\sqrt\gamma u''\ff\|^2.
$$
Indeed, unlike their derivatives, $\beta$ and $\gamma$ are supported on the whole of $\T$, so
critical points of different order have to be treated differently from each other. We start from the $\beta$ term.

\begin{lemma}\label{lem:err2}
Assume that 
\begin{equation}\label{eq:constr4}
\eps_{\beta,j}\ll 1, \qquad \forall j\in\{0,\ldots,n_0\}.
\end{equation}
There exists $0<\kappa_0\ll1$ such that if $\nu |k|^{-1}\leq\kappa_0$ there holds
$$
\nu|k|^2\|\beta u'''\ff\|^2\leq\frac{1}{14C_0}\left( \nu\|\de_y\ff\|^2+|k|^2\|\sqrt\beta u'\ff\|^2 \right).
$$
\end{lemma}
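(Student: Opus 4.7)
The plan is to localize $\|\beta u'''\ff\|^2$ via the partition of unity $\set{\phi_j}$, exploit the Taylor expansion of $u'''$ at each critical point of $u$, and close using the localized spectral-gap inequality \eqref{eq:spectrga} of Lemma \ref{lem:unwritten}. First, by Cauchy--Schwarz applied to $\beta(y) = \sum_j \eps_{\beta,j}\beta_j\phi_j(y)$ (using $\sum_j\phi_j\equiv 1$ and $\phi_j\geq 0$),
\begin{equation*}
\beta(y)^2 \leq \eps_{\beta,\max}^2 \bigl(\textstyle\sum_j \beta_j\phi_j(y)\bigr)^2 \leq \eps_{\beta,\max}^2 \sum_j \beta_j^2\phi_j(y),
\end{equation*}
where $\eps_{\beta,\max}=\max_j\eps_{\beta,j}$. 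Writing $\ff_j=\ff\sqrt{\phi_j}$ as in Lemma \ref{lem:unwritten}, this gives the master decomposition
\begin{equation*}
\nu|k|^2\|\beta u'''\ff\|^2 \leq \eps_{\beta,\max}^2 \sum_{j=0}^{n_0}\nu|k|^2\beta_j^2\|u'''\ff_j\|^2.
\end{equation*}

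I then bound $\|u'''\ff_j\|^2$ case-by-case using the local structure of $u'$ on $\spt(\phi_j)$: near a critical point $\bar y_i$ of order $j\geq 1$, Taylor expansion gives $|u'(y)|\gtrsim |y-\bar y_i|^j$ and $|u'''(y)|\lesssim |y-\bar y_i|^{\max(j-2,0)}$. For $j=0$, $|u'|$ is bounded below on $\spt(\phi_0)$, hence $\|u'''\ff_0\|^2\lesssim \|u'\ff_0\|^2$; for $j\in\set{1,2}$, $u'''$ is bounded and $\|u'''\ff_j\|^2\lesssim \|\ff_j\|^2$; and for $j\geq 3$, the pointwise comparison $|u'''|^2\lesssim |u'|^{2(j-2)/j}$ together with H\"older's inequality yields the interpolation $\|u'''\ff_j\|^2 \lesssim \|u'\ff_j\|^{2(j-2)/j}\|\ff_j\|^{4/j}$.

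To close, I invoke Lemma \ref{lem:unwritten}. The key scaling identity is $\nu\beta_j=(\nu/|k|)^{4/(j+3)}$, which is small by \eqref{eq:kappa}. The $j=0$ contribution is handled directly: $\nu|k|^2\eps_{\beta,\max}^2\beta_0^2\|u'\ff_0\|^2 \lesssim \eps_{\beta,\max}^2\eps_{\beta,0}^{-1}\kappa_0^{4/3}\cdot |k|^2\eps_{\beta,0}\beta_0\|u'\ff_0\|^2$. For $j\in\set{1,2}$, \eqref{eq:spectrga} applied to $\|\ff_j\|^2$ gives a prefactor of order $\eps_{\beta,\max}^2\eps_{\beta,j}^{-1/(j+1)}(\nu/|k|)^{2(2-j)/(j+3)}$, bounded via \eqref{eq:kappa}. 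For $j\geq 3$, apply Young's inequality $X^{(j-2)/j}Y^{2/j}\leq \eta X + C\eta^{-(j-2)/2}Y$ to the interpolation with the weight $\eta=(|k|/\nu)^{4/(j+3)}$: the $X=\|u'\ff_j\|^2$ piece absorbs into $|k|^2\eps_{\beta,j}\beta_j\|u'\ff_j\|^2$ with prefactor $O(\eps_{\beta,\max}^2\eps_{\beta,j}^{-1})$, and the $Y=\|\ff_j\|^2$ piece, treated via \eqref{eq:spectrga}, absorbs into $\nu\|\de_y\ff_j\|^2+|k|^2\eps_{\beta,j}\beta_j\|u'\ff_j\|^2$ with prefactor $O(\eps_{\beta,\max}^2\eps_{\beta,j}^{-1/(j+1)})$. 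Summing over $j$ and invoking \eqref{eq:summin} yields the claim, with the overall constant made $\leq 1/(14C_0)$ by first taking the $\eps_{\beta,j}$'s small per \eqref{eq:constr4} and then $\kappa_0$ small.

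The main obstacle is the case $j\geq 3$: neither the crude bound $|u'''|\leq C$ nor a naive pointwise comparison $|u'''|\lesssim |u'|$ is sharp enough, since the ratio of prefactors scales as $(|k|/\nu)^{2(j-2)/(j+3)}$ after applying the spectral gap. The fractional H\"older interpolation paired with the $(|k|/\nu)$-dependent Young weight $\eta=(|k|/\nu)^{4/(j+3)}$ is precisely tuned so that the two resulting terms are simultaneously absorbable with $\nu,k$-independent prefactors.
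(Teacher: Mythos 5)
Your proposal is correct and follows essentially the same route as the paper's proof: localize $\|\beta u'''\ff\|^2$ via the partition of unity, use Taylor expansion to compare $|u'''|$ with $|u'|^{(j-2)/j}$ near a critical point of order $j \geq 3$, interpolate via H\"older, split with a weighted Young inequality tuned so that $\nu\beta_j\eta = (\nu/|k|)^{4/(j+3)}\cdot(|k|/\nu)^{4/(j+3)} = 1$, and close with the localized spectral gap \eqref{eq:spectrga} and the summation estimate \eqref{eq:summin}; the low-order cases $j\in\{0,1,2\}$ are handled directly as in the paper.

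One bookkeeping difference worth flagging: you pull out $\eps_{\beta,\max}^2$ at the start (via Cauchy--Schwarz on $\sum_j\beta_j\phi_j$), which leaves prefactors of the form $\eps_{\beta,\max}^2\eps_{\beta,j}^{-1}$. These are small only if the $\eps_{\beta,j}$'s are chosen mutually comparable, which is permitted but is a slightly stronger condition than the bare hypothesis $\eps_{\beta,j}\ll 1$ as stated in the lemma. The paper instead retains $\eps_{\beta,j}^2$ term-by-term (using that at most two $\phi_j$'s overlap at any point) and obtains prefactors $\eps_{\beta,j}^{(j+3)/(j+1)}$ that are individually small under \eqref{eq:constr4} alone, with no cross-$j$ comparability needed. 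Since the ultimate parameter choice in Section \ref{sub:mainproof} is compatible with taking all $\eps_{\beta,j}$ equal, your version still closes the argument, but the paper's is the sharper and more self-contained bookkeeping.
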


\begin{proof}
First notice that (denoting $f_j = f \sqrt{\phi_j}$), 
$$
\|\beta u'''\ff\|^2\leq 2\sum_{j}\eps_{\beta,j}^2\beta_j^2 \int_\T |u'''|^2|\ff_j|^2\d y,
$$
so it suffices to work $j$ by $j$. 
We will make use of the spectral gap estimate \eqref{eq:spectrga}
and  distinguish between two cases. If $j\leq 2$, then we simply bound the above sum as
$$
\sum_{j=0}^2\eps_{\beta,j}^2\beta_j^2 \int_\T |u'''|^2|\ff_j|^2\d y \lesssim \sum_{j=0}^2\eps_{\beta,j}^2\beta_j^2\|\ff_j\|^2.
$$
Now, on the support of $\phi_0$, $\abs{u'} \geq c > 0$ for some $c$ and therefore
\begin{align*}
\nu|k|^2\eps_{\beta,0}^2\beta_0^2\|\ff_0\|^2\lesssim \nu|k|^2\eps_{\beta,0}^2\beta_0^2\|u'\ff_0\|^2.
\end{align*}
From \eqref{eq:param} and the restriction $\nu|k|^{-1}\leq \kappa_0$, we obtain
\begin{align*}
\nu|k|^2\eps_{\beta,0}^2\beta_0^2\|\ff_0\|^2=\eps_{\beta,0} (\nu|k|^{-1})^{4/3}\eps_{\beta,0}\beta_0|k|^2\|u'\ff_0\|^2 
\lesssim \eps_{\beta,0}\left[\eps_{\beta,0}\beta_0|k|^2\|u'\ff_0\|^2 \right] .
\end{align*}
For $j=1$ we use the spectral gap \eqref{eq:spectrga} and \eqref{eq:param} to infer that
\begin{align*}
\nu|k|^2 \eps_{\beta,1}^2\beta_1^2\|\ff_1\|^2
& \lesssim (\nu |k|^{-1})^{1/2}\eps_{\beta,1}^{3/2}\left[\nu\|\de_y\ff_1\|^2+\eps_{\beta,1}\beta_1|k|^2\|u'\ff_1\|^2 \right] \\
& \lesssim \eps_{\beta,1}^{3/2}\left[\nu\|\de_y\ff_1\|^2+\eps_{\beta,1}\beta_1|k|^2\|u'\ff_1\|^2 \right],
\end{align*}
while for $j=2$ a similar argument shows that
$$
\nu|k|^2\eps_{\beta,2}^2\beta_2^2\|\ff_2\|^2\lesssim  \eps_{\beta,2}^{5/3}\left[\nu\|\de_y\ff_2\|^2+\eps_{\beta,2}\beta_2
|k|^2\|u'\ff_2\|^2 \right].
$$
Notice that the case $j=2$ is sharp, in the sense that no factor involving positive powers of $\nu |k|^{-1}$ 
appear in the right-hand-side above, so it is necessary to choose $\eps_{\beta,j}$ small.
For $j\geq 3$ the picture is different. In this case, on the support of $\phi_j$, by Taylor's theorem we have that 
$$
|u'''(y)|\lesssim |u'(y)|^{\frac{j-2}{j}},
$$
since $u'$ vanishes to order $j$ and $u'''$ to order $j-2$ at the critical points. Therefore, again reasoning term by term, we apply H\"older's inequality
to obtain
\begin{align*}
\eps_{\beta,j}^2\beta_j^2 \int_\T |u'''|^2|\ff_j|^2\d y&\lesssim\eps_{\beta,j}^2\beta_j^2 \int_\T |u'|^\frac{2(j-2)}{j}|\ff_j|^2\d y\\
&\lesssim \eps_{\beta,j}^2\beta_j^2 \| u' \ff_j\|^\frac{2(j-2)}{j}\|\ff_j\|^\frac{4}{j}\\
&\lesssim\frac{\eps_{\beta,j}^\frac{j+3}{j+1}}\nu  \eps_{\beta,j}\beta_j \| u' \ff_j\|^2
+ \nu^\frac{j-2}{2}\eps_{\beta,j}^\frac{j+4}{j+1}\beta_j^\frac{j+2}{2}\|\ff_j\|^2.
\end{align*}
Thus,
$$
\nu|k|^2\eps_{\beta,j}^2\beta_j^2 \int_\T |u'''|^2|\ff_j|^2\d y\lesssim
\eps_{\beta,j}^\frac{j+3}{j+1}|k|^2 \eps_{\beta,j}\beta_j\|u' \ff_j\|^2+ |k|^2\nu^{j/2}\eps_{\beta,j}^\frac{j+4}{j+1}\beta_j^\frac{j+2}{2}\|\ff_j\|^2.
$$
Hence, from \eqref{eq:param} and \eqref{eq:spectrga}  we have
\begin{align*}
 |k|^2\nu^{j/2}\eps_{\beta,j}^\frac{j+4}{j+1}\beta_j^\frac{j+2}{2}\|\ff_j\|^2
\lesssim \eps_{\beta,j}^\frac{j+3}{j+1}\left[\nu\|\de_y\ff_j\|^2+|k|^2\eps_{\beta,j}\beta_j\|u'\ff_j\|^2\right],
\end{align*}
and therefore we conclude that
$$
\nu|k|^2\eps_{\beta,j}^2\beta_j^2 \int_\T |u'''|^2|\ff_j|^2\d y\lesssim \eps_{\beta,j}^\frac{j+3}{j+1}\left[\nu\|\de_y\ff_j\|^2+
|k|^2\eps_{\beta,j}\beta_j\| u'\ff_j\|^2\right].
$$
We now sum over $j=0,\ldots, n_0$, use Lemma \ref{lem:unwritten} and choose $\eps_{\beta,j}$ small enough.
\end{proof}
The term involving $\gamma$ is controlled by the same quantity, and therefore a constraint 
on the size of $\eps_{\gamma, j}$ in terms of $\eps_{\beta,j}$ needs to be imposed.

\begin{lemma}\label{lem:err3}
Assume that 
\begin{equation}\label{eq:constr5}
 \eps_{\gamma,j} \ll \eps_{\beta,j}^\frac{j}{j+1}, \qquad \forall j\in \{1,\ldots,n_0\}.
\end{equation}
There exists $0<\kappa_0\ll1$ such that if $\nu |k|^{-1}\leq\kappa_0$ there holds
$$
 \nu|k|^2\|\sqrt\gamma u''\ff\|^2\leq\frac{1}{14C_0}\left( \nu\|\de_y\ff\|^2+|k|^2\|\sqrt\beta u'\ff\|^2\right).
$$
\end{lemma}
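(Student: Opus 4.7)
The plan is to mirror the proof of Lemma \ref{lem:err2}, localizing via the partition of unity $\{\phi_j\}$. Writing $\ff_j = \ff\sqrt{\phi_j}$, since the $\phi_j$ are mutually disjoint for $j\geq 1$ and $\phi_0$ is the remaining bump, we decompose
\begin{equation*}
\nu |k|^2 \|\sqrt\gamma u''\ff\|^2 = \sum_{j=0}^{n_0} \nu |k|^2 \eps_{\gamma,j}\gamma_j \|u'' \ff_j\|^2,
\end{equation*}
and estimate each summand separately, aiming to control it by a small multiple of $\nu\|\de_y\ff_j\|^2 + |k|^2\eps_{\beta,j}\beta_j\|u'\ff_j\|^2$. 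Summing via \eqref{eq:summin} in Lemma \ref{lem:unwritten} will then yield the claim, with the constant $1/(14C_0)$ absorbed by the smallness of the $\eps_{\gamma,j}$.

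For $j=0$, on $\spt\phi_0$ the derivative $u'$ is bounded below by a constant $c>0$, and $u''$ is bounded, so $\|u''\ff_0\|^2 \lesssim \|u'\ff_0\|^2$. The ratio $\nu\eps_{\gamma,0}\gamma_0/(\eps_{\beta,0}\beta_0) = (\eps_{\gamma,0}/\eps_{\beta,0})(\nu/|k|)^{2/3}$ is then small for $\kappa_0\ll 1$. For $j\geq 1$, the key algebraic input is that on $\spt\phi_j$ every critical point of $u'$ is of order exactly $j$, so Taylor's theorem gives $|u''(y)| \lesssim |u'(y)|^{(j-1)/j}$. Combined with Hölder's inequality (with exponents $j/(j-1)$ and $j$), this yields
\begin{equation*}
\|u''\ff_j\|^2 \lesssim \|u'\ff_j\|^{2(j-1)/j}\,\|\ff_j\|^{2/j}.
\end{equation*}

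Now apply weighted Young $a^{(j-1)/j}b^{1/j} \leq \eta a + C\eta^{-(j-1)} b$ with $\eta := \eps_{\beta,j}\beta_j/(\nu\eps_{\gamma,j}\gamma_j)$, so that the $a$-term gives exactly a small multiple of $|k|^2\eps_{\beta,j}\beta_j\|u'\ff_j\|^2$. The leftover $\|\ff_j\|^2$ term is absorbed by invoking the localized spectral gap \eqref{eq:spectrga} of Lemma \ref{lem:unwritten}; it suffices to show that
\begin{equation*}
\nu|k|^2\eps_{\gamma,j}\gamma_j\,\eta^{-(j-1)} \ll |k|^{2/(j+1)}\nu^{j/(j+1)}(\eps_{\beta,j}\beta_j)^{1/(j+1)}.
\end{equation*}
The main obstacle is the bookkeeping: inserting the definitions \eqref{eq:param} of $\beta_j$ and $\gamma_j$, all powers of $\nu$ and $|k|$ cancel identically (this is dictated by the scaling relation $\beta_j^2 = \alpha_j\gamma_j$ of \eqref{eq:constr3} together with the exponent $j/(j+1)$ coming from the spectral gap), and the inequality reduces to the dimensionless condition $\eps_{\gamma,j}^{j+1} \ll \eps_{\beta,j}^{j}$, i.e., precisely \eqref{eq:constr5}. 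Once this matching is verified $j$ by $j$, summing over $j\in\{0,\dots,n_0\}$ and applying \eqref{eq:summin} completes the proof.
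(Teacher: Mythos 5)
Your overall approach matches the paper's almost exactly: decompose via the partition of unity, use $\abs{u'}\geq c$ for $j=0$, use $\abs{u''}\lesssim \abs{u'}^{(j-1)/j}$ by Taylor for higher $j$, apply H\"older and weighted Young, and close with the localized spectral gap \eqref{eq:spectrga} and \eqref{eq:summin}. However, there is a genuine bug in the choice of Young parameter.

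With your choice $\eta = \eps_{\beta,j}\beta_j/(\nu\eps_{\gamma,j}\gamma_j)$, the $a$-term in
\[
\nu|k|^2\eps_{\gamma,j}\gamma_j\bigl[\eta\,\|u'\ff_j\|^2 + C\eta^{-(j-1)}\|\ff_j\|^2\bigr]
\]
comes out to exactly $|k|^2\eps_{\beta,j}\beta_j\|u'\ff_j\|^2$ with coefficient $1$, \emph{not} a small multiple. After summing and invoking \eqref{eq:summin}, this gives an $O(1)$ multiple of $|k|^2\|\sqrt{\beta}u'\ff\|^2$, which cannot be driven below $1/(14C_0)$ by the parameter constraints. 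The computation you do for the $b$-term is correct and does show the required smallness there --- the exponents of $\nu$ and $|k|$ cancel identically, leaving the dimensionless ratio $(\eps_{\gamma,j}/\eps_{\beta,j}^{j/(j+1)})^j$ --- but the gain there does not help the $a$-term.

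The fix is the one the paper uses: shrink $\eta$ by the factor $\eps_{\gamma,j}/\eps_{\beta,j}^{j/(j+1)}$, i.e.\ take $\eta = \eps_{\beta,j}^{1/(j+1)}\beta_j/(\nu\gamma_j)$. Then the $a$-term coefficient becomes $\eps_{\gamma,j}/\eps_{\beta,j}^{j/(j+1)}\ll 1$, while the $b$-term loses a factor $(\eps_{\gamma,j}/\eps_{\beta,j}^{j/(j+1)})^{-(j-1)}$ and, after the spectral gap, ends up with coefficient $\eps_{\gamma,j}/\eps_{\beta,j}^{j/(j+1)}$ as well. This balances the two contributions and makes the single constraint \eqref{eq:constr5} suffice for both. (Note also that for $j=1$ the Young step is degenerate --- $(j-1)/j=0$ --- and one applies the spectral gap directly to $\|\ff_1\|^2$; your framework covers this only formally.) With this correction, your argument becomes the paper's proof.
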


\begin{proof}
We have (denoting as usual $f_j = f \sqrt{\phi_j}$), 
$$
 \nu|k|^2\|\sqrt\gamma u''\ff\|^2=\nu|k|^2\sum_{j=0}^{n_0}\gamma_j\eps_{\gamma,j}\|u''\ff_j\|^2.
$$
For $j=0$, from \eqref{eq:param} we obtain the trivial estimate (using that $\abs{u'} \geq c > 0$ on the support), 
\begin{align*}
\nu|k|^2\gamma_0\eps_{\gamma,0}\|u''\ff_0\|^2
&\lesssim \nu|k|^2\gamma_0\eps_{\gamma,0}\|\ff_0\|^2\\
&\lesssim \nu|k|^2\gamma_0\eps_{\gamma,0}\|u'\ff_0\|^2\\
&\lesssim (\nu|k|^{-1})^{2/3}\frac{\eps_{\gamma,0}}{\eps_{\beta,0}}\left[|k|^2\beta_0\eps_{\beta,0}\|u'\ff_0\|^2\right].
\end{align*}
In the case $j=1$, \eqref{eq:spectrga} and \eqref{eq:param} entail
$$
\nu|k|^2\gamma_1\eps_{\gamma,1}\|u''\ff_1\|^2\lesssim\nu|k|^2\gamma_1\eps_{\gamma,1}\|\ff_1\|^2
\lesssim \frac{\eps_{\gamma,1}}{\eps_{\beta,1}^{1/2}} \left[\nu \|\de_y\ff_1\|^2+|k|^2\beta_1\eps_{\beta,1}\|u'\ff_1\|^2\right].
$$
For $j\geq 2$, on the support of $\phi_j$ we have (similar to the proof of Lemma \ref{lem:err2} above), 
$$
|u''|\lesssim |u'|^\frac{j-1}{j}.
$$
Consequently,
\begin{align*}
\gamma_j\eps_{\gamma,j}\|u''\ff_j\|^2&\lesssim \gamma_j\eps_{\gamma,j}\|u'\ff_j\|^\frac{2(j-1)}{j}\|\ff_j\|^\frac2j\\
&\lesssim \frac{1}{\nu}\frac{\eps_{\gamma,j}}{\eps_{\beta,j}^\frac{j}{j+1}} \beta_j\eps_{\beta, j}\|u'\ff_j\|^2+ \frac{\eps_{\gamma,j}}{\eps_{\beta,j}^\frac{{j-1}}{j+1}}\frac{\gamma_j^j \nu^{j-1}}{\beta_j^{j-1} }\|\ff_j\|^2,
\end{align*}
and thus
\begin{align*}
\nu|k|^2\gamma_j\eps_{\gamma,j}\|u''\ff_j\|^2\lesssim
\frac{\eps_{\gamma,j}}{\eps_{\beta,j}^\frac{j}{j+1}} |k|^2\beta_j\eps_{\beta, j}\|u'\ff_j\|^2
+ \frac{\eps_{\gamma,j}}{\eps_{\beta,j}^\frac{{j-1}}{j+1}}\frac{\gamma_j^j \nu^{j} |k|^2}{\beta_j^{j-1} }\|\ff_j\|^2.
\end{align*}
We now make use of \eqref{eq:spectrga} on the second term below. Taking into account \eqref{eq:param} one more time,
we have
$$
\frac{\eps_{\gamma,j}}{\eps_{\beta,j}^\frac{{j-1}}{j+1}}\frac{\gamma_j^j \nu^{j} |k|^2}{\beta_j^{j-1} }\|\ff_j\|^2\lesssim
\frac{\eps_{\gamma,j}}{\eps_{\beta,j}^\frac{j}{j+1}} \left[\nu\|\de_y\ff_j\|^2+|k|^2\beta_j\eps_{\beta, j}\|u'\ff_j\|^2\right].
$$
Collecting all of the above estimates, we end up with
$$
 \nu|k|^2\|\sqrt\gamma u''\ff\|^2\lesssim \frac{\eps_{\gamma,0}}{\eps_{\beta,0}}\left[|k|^2\beta_0\eps_{\beta,0}\|u'\ff_0\|^2\right]+\sum_{j=1}^{n_0}\frac{\eps_{\gamma,j}}{\eps_{\beta,j}^\frac{j}{j+1}} \left[\nu\|\de_y\ff_j\|^2+|k|^2\beta_j\eps_{\beta, j}\|u'\ff_j\|^2\right],
$$
so that exploiting Lemma \ref{lem:unwritten} and  \eqref{eq:constr5} the result follows.
\end{proof}

\subsection{Technical error terms}\label{sub:technicalerr}
In this section we turn to the error terms associated with the $y$-dependence of the coefficients; we refer to these errors as ``technical''. 

\begin{lemma}\label{lem:err1}
For $\alpha,\beta,\gamma$ chosen as in Section \ref{sec:abgdef}, for $\nu \abs{k}^{-1}$ sufficiently small, there holds 
\begin{align*}
\nu\norm{\frac{\alpha'}{\sqrt\alpha}\de_y\ff}^2 \leq  \frac{1}{14C_0}\nu \abs{k}^2\norm{\sqrt{\gamma} u' \partial_y f}^2.
\end{align*}
\end{lemma}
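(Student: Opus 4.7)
The plan is to exploit the fact that $\alpha' = \sum_{j=0}^{n_0}\eps_{\alpha,j}\alpha_j \phi_j'$ is supported only on $\bigcup_j \spt(\phi_j')$, which lies at distance at least $\delta$ from every critical point of $u$; on this set $\abs{u'(y)} \geq c > 0$ for some $c = c(u)$, and this is precisely the ``missing'' factor $(u')^2$ appearing on the right-hand side. The remaining room will come from the favorable scaling $\alpha_{n_0} \ll \abs{k}^2\gamma_0$ that holds for $\nu\abs{k}^{-1}\leq \kappa_0$, which supplies the small prefactor $1/(14 C_0)$.

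First I would establish a clean pointwise bound on $(\alpha'(y))^2/\alpha(y)$. Cauchy--Schwarz together with the key inequality $\abs{\phi_j'}^2 \leq C\abs{\phi_j}$ from Lemma \ref{lem:part2}(5) (applied with $\varsigma = 1/2$) yields
$$
(\alpha'(y))^2 \lesssim \sum_{j=0}^{n_0} \eps_{\alpha,j}^2\alpha_j^2\, \phi_j(y),
$$
and since $\alpha(y) \geq \eps_{\alpha,j}\alpha_j\phi_j(y)$ termwise, dividing by $\alpha$ gives
$$
\frac{(\alpha'(y))^2}{\alpha(y)} \lesssim \sum_{j=0}^{n_0}\eps_{\alpha,j}\alpha_j\,\mathbf{1}_{\spt(\phi_j')}(y).
$$
This is really the only step that needs attention: without $\abs{\phi_j'}^2\lesssim \phi_j$ one could not control $(\alpha')^2/\alpha$ on the set where $\phi_j$ is small, but that inequality is engineered into the partition of unity precisely for this purpose.

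Next, using $\sum_i \phi_i = 1$ together with the monotonicity $\gamma_i \geq \gamma_0$ that follows from \eqref{eq:abgmono} (valid for $\nu\abs{k}^{-1}\leq \kappa_0$), one has $\gamma(y) \geq (\min_i \eps_{\gamma,i})\gamma_0$ for every $y$, while $\abs{u'(y)}^2 \geq c^2$ on $\spt(\alpha')$. Combining with the pointwise bound above and recalling from \eqref{eq:param} that $\abs{k}^2\gamma_0 = 1$ and $\max_j \alpha_j = \alpha_{n_0} = (\nu/\abs{k})^{2/(n_0+3)}$,
$$
\nu\norm{\frac{\alpha'}{\sqrt\alpha}\de_y\ff}^2 \lesssim \frac{\max_j \eps_{\alpha,j}\alpha_j}{\gamma_0}\,\nu\norm{\sqrt\gamma\,u'\,\de_y\ff}^2 \lesssim \left(\frac{\nu}{\abs{k}}\right)^{\!\frac{2}{n_0+3}}\nu\abs{k}^2\norm{\sqrt\gamma\,u'\,\de_y\ff}^2,
$$
with all suppressed constants depending only on $u$ and the (already fixed) $\eps$'s. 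The conclusion follows by taking $\kappa_0$ small enough that the implicit constant times $(\nu\abs{k}^{-1})^{2/(n_0+3)}$ lies below $1/(14C_0)$; the ``main obstacle'' is thus really bookkeeping, as all the actual work is carried out by the pointwise estimate $\abs{\phi_j'}^2\lesssim \phi_j$ and by the scaling gap $\alpha_{n_0}\ll \abs{k}^2\gamma_0$.
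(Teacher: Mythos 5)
Your proof is correct and is a legitimate simplification of the paper's argument. Both you and the paper begin by reducing to a pointwise inequality on the support of $\alpha'$, where $\abs{u'}\geq c>0$, and both exploit the key engineered property $\abs{\phi_j'}\lesssim_\varsigma\abs{\phi_j}^{1-\varsigma}$. The route then diverges. The paper works at a fixed $y$ with exactly the two overlapping cutoffs $\phi_0$ and $\phi_i$, rewrites $\alpha'$ via $\phi_0'=-\phi_i'$, and then carefully balances a free $\varsigma$-exponent between the factor of $\phi_i$ absorbed into $(\alpha\phi_i)^{1-\varsigma}$ and the factor tracked against $\gamma_i\phi_i$, using the interpolation of $\gamma(y)$ between $\gamma_0$ and the much larger $\gamma_i$ on the transition region. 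You sidestep all of this: you fix $\varsigma=1/2$ once and for all to get the clean bound $\abs{\phi_j'}^2\lesssim\phi_j$, divide termwise by $\alpha\geq\eps_{\alpha,j}\alpha_j\phi_j$ to get $(\alpha')^2/\alpha\lesssim\max_j\eps_{\alpha,j}\alpha_j$, and then discard the interpolation gain in $\gamma$ entirely in favor of the crude lower bound $\gamma\geq(\min_j\eps_{\gamma,j})\gamma_0=(\min_j\eps_{\gamma,j})\abs{k}^{-2}$. The budget that makes this work is, as you correctly identify, $\max_j\alpha_j=\alpha_{n_0}=(\nu\abs{k}^{-1})^{2/(n_0+3)}\to 0$; the $\eps$'s are fixed first, and $\kappa_0$ is chosen last. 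What the paper's more careful $\varsigma$-bookkeeping buys is a sharper pointwise comparison between $\alpha'$ and $\gamma\alpha$, but for the present lemma (where one only needs a fixed small prefactor $1/(14C_0)$, not an optimal one) your cruder bound suffices and the write-up is shorter and easier to audit.
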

\begin{proof} 
First, notice that the integrand is only supported in the region of $y$'s such that $\abs{u'(y)} \geq c > 0$ for some $c > 0$. 
It then suffices to show the pointwise estimate 
\begin{align*}
\abs{\alpha'(y)}^2 \leq \frac{c^2}{14C_0} \gamma(y) \alpha(y) \abs{k}^2  
\end{align*} 
on the support of the integrand. 
From Section \ref{sec:abgdef}, this is equivalent to
\begin{align*}
\abs{\sum_j \eps_{\alpha,j}\alpha_j \phi_j' }^2 \leq \frac{c^2}{14C_0}|k|^2 \left(\sum_j \eps_{\gamma,j}\gamma_j \phi_j \right) \left(\sum_j \eps_{\alpha,j}\alpha_j \phi_j \right).
\end{align*}
At any fixed $y$, only two terms in the summation will be non-zero, $j = 0$ and some other term, say $j = i$. 
Therefore, it suffices to prove:  
\begin{align}
\abs{\eps_{\alpha,0}\alpha_0\phi_0' + \eps_{\alpha,i}\alpha_i\phi_i'}^2 \leq \frac{c^2}{14C_0} \abs{k}^2 \left(\eps_{\gamma,0}\gamma_0 \phi_0 + \eps_{\gamma,i}\gamma_i \phi_i\right) \left(\eps_{\alpha,0}\alpha_0 \phi_0 + \eps_{\alpha,i}\alpha_i \phi_i\right). \label{ineq:Lemapgoal}
\end{align}
We now endeavor to prove \eqref{ineq:Lemapgoal}. 
Note, 
\begin{align*}
\abs{\eps_{\alpha,0}\alpha_0\phi_0' + \eps_{\alpha,i}\alpha_i\phi_i'}^2  & = \abs{\eps_{\alpha,0}\alpha_0 - \eps_{\alpha,i}\alpha_i}^2 \abs{\phi_i'}^2, \\ 
\eps_{\gamma,0}\gamma_0 \phi_0 + \eps_{\gamma,i}\gamma_i \phi_i & = \eps_{\gamma,0}\gamma_0 + (\eps_{\gamma,i}\gamma_i - \eps_{\gamma,0}\gamma_0)\phi_i, \\ 
\eps_{\alpha,0}\alpha_0 \phi_0 + \eps_{\alpha,i}\alpha_i \phi_i & = \eps_{\alpha,0}\alpha_0 + (\eps_{\alpha,i}\alpha_i - \eps_{\alpha,0}\alpha_0)\phi_i. 
\end{align*}
Then, by \eqref{eq:part}, namely 
\begin{align*}
\abs{\phi_i'}^2 \lesssim_\varsigma  \abs{\phi_i}^{2-\varsigma},
\end{align*}
we have 
\begin{align*}
\abs{\eps_{\alpha,0}\alpha_0\phi_0' + \eps_{\alpha,i}\alpha_i\phi_i'}^2 & \lesssim  \left((\eps_{\alpha,i}\alpha_i - \eps_{\alpha,0}\alpha_0)\phi_i\right)^{1-\varsigma} \left(\eps_{\alpha,i}\alpha_i - \eps_{\alpha,0}\alpha_0\right)^{1+\varsigma} \phi_i. 
\end{align*}
For $\nu \abs{k}^{-1}\ll 1$, $\eps_{\alpha,i} \alpha_i \gg \eps_{\alpha,0}\alpha_0$ by \eqref{eq:abgmono}, we have (also choosing $\nu \abs{k}^{-1}$ sufficiently small relative to $\eps_{\alpha,i}\eps_{\gamma,i}^{-1}$),  
\begin{align*}
\left((\eps_{\alpha,i}\alpha_i - \eps_{\alpha,0}\alpha_0)\right)^{1-\varsigma}
 & \lesssim \eps_{\alpha,i}^{1-\varsigma}\left(\frac{\nu}{\abs{k}}\right)^{\frac{2}{i+3}\left(1-\varsigma\right)} \\
 & \lesssim \eps_{\gamma,i}^{1-\varsigma}  \left(\frac{\abs{k}}{\nu}\right)^{\frac{2i}{i+3}\left(1-\varsigma\right)} \\ 
& \lesssim  \abs{k}^{2(1-\varsigma)} \eps_{\gamma,i}^{1-\varsigma} \gamma_{i}^{1-\varsigma}.  
\end{align*}
Then, using that $\gamma_0 = \abs{k}^{-2}$, we get by choosing $\nu \abs{k}^{-1}$ sufficiently small, 
\begin{align*}
\left(\eps_{\alpha,i}\alpha_i - \eps_{\alpha,0}\alpha_0\right)^{\varsigma}\left((\eps_{\alpha,i}\alpha_i - \eps_{\alpha,0}\alpha_0)\phi_i\right)^{1-\varsigma}
& \lesssim \left(\eps_{\alpha_i}\frac{\nu}{\abs{k}}\right)^{\frac{2}{i+3}\zeta} \abs{k}^{2(1-\varsigma)} \eps_{\gamma,i}^{1-\varsigma} \gamma_{i}^{1-\varsigma} \phi_i^{1-\varsigma} \\ 
& \lesssim  \abs{k}^2 \eps_{\gamma,0}^\zeta \gamma_0^\zeta \eps_{\gamma,i}^{1-\varsigma} \gamma_{i}^{1-\varsigma} \phi_i^{1-\varsigma}. 
\end{align*}
This proves \eqref{ineq:Lemapgoal} and hence the lemma follows. 
\end{proof} 

\begin{lemma}\label{lem:err3bis}
For $\alpha,\beta,\gamma$ chosen as described above, there holds for $\nu \abs{k}^{-1}$ sufficiently small,  
\begin{align*}
\nu |k|^2\|\beta'u''\ff\|^2 \leq \frac{1}{14C_0}|k|^2\|\sqrt{\beta} u'\ff\|^2. 
\end{align*}
\end{lemma}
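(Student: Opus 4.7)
The goal is to establish the pointwise estimate
\begin{equation*}
\nu |\beta'(y)|^2 |u''(y)|^2 \leq \frac{1}{14C_0}\beta(y)|u'(y)|^2,
\end{equation*}
for all $y\in\T$, which integrated against $|\ff|^2$ immediately gives the lemma. My plan is to mimic the structure of Lemma \ref{lem:err1}: exploit the partition-of-unity structure of $\beta$ to reduce the estimate to a local statement on the support of each $\phi_i'$, and then use that on this support $u'$ stays bounded away from zero.

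First, I would observe that by Lemma \ref{lem:part2}, the supports of $\phi_i$ for $i\geq 1$ are pairwise disjoint, and $\phi_0=1-\sum_{i\geq 1}\phi_i$. Consequently, at any fixed $y$ either all $\phi_j'(y)=0$ (in which case $\beta'(y)=0$ and there is nothing to prove), or there is a unique $i\in\{1,\ldots,n_0\}$ with $\phi_i'(y)\neq 0$, in which case $\phi_0'(y)=-\phi_i'(y)$ and therefore
\begin{equation*}
\beta'(y)=\bigl(\eps_{\beta,i}\beta_i-\eps_{\beta,0}\beta_0\bigr)\phi_i'(y).
\end{equation*}
By \eqref{eq:abgmono}, for $\nu|k|^{-1}\leq\kappa_0$ small enough we have $\eps_{\beta,i}\beta_i\gg \eps_{\beta,0}\beta_0$, so the factor in parentheses is comparable to $\eps_{\beta,i}\beta_i$.

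The crucial structural point, analogous to the $\alpha'$ situation in Lemma \ref{lem:err1}, is that the support of $\phi_i'$ (for $i\geq 1$) is contained in the annular transition region $\delta\leq |y-\bar y_\ell|\leq 2\delta$ around critical points $\bar y_\ell$ of order $i$. On this set Taylor's theorem gives $|u'(y)|\geq c_\ell \delta^{i}>0$, uniformly in $\nu$ and $k$, while $|u''(y)|\leq \|u''\|_{L^\infty}$. Combining this with the partition-of-unity bound $|\phi_i'|^2\lesssim_\varsigma \phi_i^{2-\varsigma}$ from Lemma \ref{lem:part2}(5), I obtain on $\spt(\phi_i')$
\begin{equation*}
\nu|\beta'(y)|^2|u''(y)|^2\lesssim \nu\bigl(\eps_{\beta,i}\beta_i\bigr)^2\phi_i^{2-\varsigma}\|u''\|_{L^\infty}^2 \lesssim \bigl(\nu\eps_{\beta,i}\beta_i\bigr)\,\bigl(\eps_{\beta,i}\beta_i\phi_i\bigr),
\end{equation*}
where I used $\phi_i^{1-\varsigma}\leq 1$. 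Since $\beta(y)\geq \eps_{\beta,i}\beta_i\phi_i(y)$ and $|u'(y)|^2\geq c_\ell^2\delta^{2i}$ on the same support, one has $\eps_{\beta,i}\beta_i\phi_i\lesssim \beta(y)|u'(y)|^2$.

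The proof then closes because
\begin{equation*}
\nu\,\eps_{\beta,i}\beta_i=\eps_{\beta,i}\bigl(\nu|k|^{-1}\bigr)^{\frac{4}{i+3}}\leq \eps_{\beta,i}\kappa_0^{\frac{4}{i+3}},
\end{equation*}
which, after first fixing all the $\eps_{\beta,j}$, can be made arbitrarily small by taking $\kappa_0$ small enough, beating the $C_0$ and geometric constants. I expect no real obstacle: unlike Lemma \ref{lem:err1}, here both $u''$ and $(u')^{-1}$ are bounded on $\spt(\beta')$, so the two-term local analysis is considerably simpler and no balancing of different powers of $\alpha_j/\gamma_j$ is required—the only smallness used is $\nu\beta_i\to 0$ as $\nu|k|^{-1}\to 0$, exactly as in the analogous $j\geq 1$ estimates of Lemma \ref{lem:err2}.
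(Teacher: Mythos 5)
Your proof is correct and follows essentially the same route as the paper's: reduce to a pointwise estimate on the support of $\beta'$ (where $\abs{u'}\geq c>0$ and $u''$ is bounded), note that only one $\phi_i'$ ($i\geq 1$) can be nonzero at a time so that $\beta'=(\eps_{\beta,i}\beta_i-\eps_{\beta,0}\beta_0)\phi_i'$, invoke $\abs{\phi_i'}^2\lesssim\phi_i^{2-\varsigma}$ and \eqref{eq:abgmono}, and observe that the smallness comes from $\nu\beta_i=(\nu\abs{k}^{-1})^{4/(i+3)}$. Your rendition is slightly more explicit in a couple of places (Taylor's theorem for the lower bound on $u'$, the disjointness of the $\phi_i$) but is the same argument.
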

\begin{proof} 
Note that the support of the integrand is on the set where $\abs{u'} \geq c > 0$, for some positive constant $c > 0$ and hence 
it suffices to prove the pointwise bound 
\begin{align*}
\nu\abs{\beta'(y)}^2 \leq \frac{c^2}{14C_0} \beta(y). 
\end{align*}
For any $y$, there is an $i \neq 0$ such that this inequality is equivalent to
\begin{align}
\nu\abs{(\eps_{\beta,i}\beta_i - \eps_{\beta,0}\beta_0)\phi_i'}^2 \leq \frac{c^2}{14C_0}\left(\eps_{\beta,0}\beta_0 + (\eps_{\beta,i}\beta_i - \eps_{\beta,0}\beta_0)\phi_i\right). \label{ineq:errb3isgoal}
\end{align}
By \eqref{eq:abgmono} and \eqref{eq:part}, we have the following for $\nu \abs{k}^{-1}$ sufficiently small,   
\begin{align*}
\nu\abs{(\eps_{\beta,i}\beta_i - \eps_{\beta,0}\beta_0)\phi_i'}^2 & \lesssim \frac{\nu^{1 + \frac{2(1-i)}{i+3}}}{\abs{k}^{\frac{8}{i+3}}} \phi_i \lesssim \eps_{\beta,i}^2\left(\frac{\nu}{\abs{k}}\right)^{\frac{4}{i+3}}
\beta_i \phi_i \\ 
& \lesssim \eps_{\beta,i}\left(\frac{\nu}{\abs{k}}\right)^{\frac{4}{i+3}} \left(\eps_{\beta,0}\beta_0 + (\eps_{\beta,i}\beta_i - \eps_{\beta,0}\beta_0)\phi_i\right). 
\end{align*}
from which the lemma follows by choosing either $\eps_{\beta,i}$ or $\nu \abs{k}^{-1}$ sufficiently small (by \eqref{ineq:errb3isgoal}).  
\end{proof} 

\begin{lemma}\label{lem:err4}
For $\alpha,\beta,\gamma$ chosen as described above, there holds for $\nu \abs{k}^{-1}$ sufficiently small,  
\begin{align*}
\nu |k|^2 \| \beta''u'\ff\|^2 \leq \frac{1}{14C_0}|k|^2\|\sqrt{\beta} u'\ff\|^2. 
\end{align*}
\end{lemma}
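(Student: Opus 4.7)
My plan is to mirror the strategy of Lemma~\ref{lem:err3bis} almost verbatim, since the scaling $\nu \beta''$ behaves the same way as $\sqrt{\nu}\beta'$ did there, up to the need for one extra derivative estimate on the partition of unity.

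First I would argue that the integrand is again supported on the set where $|u'|\geq c>0$. Indeed, $\beta''=\sum_{j}\eps_{\beta,j}\beta_j\phi_j''$, and since $\phi_0 = 1-\sum_{j\geq 1}\phi_j$, the support of $\beta''$ lies in $\bigcup_i\{\delta\leq|y-\bar y_i|\leq 2\delta\}$, away from every critical point of $u$. On this set $|u'|$ is bounded below by a positive constant depending only on $u$. Therefore the lemma reduces to the pointwise inequality
\begin{equation*}
\nu |\beta''(y)|^2 \leq \frac{c^2}{14C_0}\,\beta(y),\qquad y\in \spt(\beta'').
\end{equation*}

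The second step is a second-derivative analogue of \eqref{eq:part}: for every $\varsigma\in(0,1)$,
\begin{equation*}
|\phi_j''(y)|^2 \lesssim_\varsigma |\phi_j(y)|^{2-\varsigma},\qquad \forall y\in \T,\quad j=0,\ldots,n_0.
\end{equation*}
This follows from the same calculation already used for $|\phi_j'|$: since $\phi_j$ is assembled piecewise from $\theta(z)=e^{-1/z}$ and $\theta$ satisfies $|\theta^{(m)}(z)|\leq C_{m,\varsigma}\theta(z)^{1-\varsigma}$ for any $m\geq 1$ by an explicit Faà di Bruno computation, the bound transfers to $\psi$, $\phi$, and thence to each $\phi_j$. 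Fixing $y$, at most two indices contribute to $\beta''$, namely $j=0$ and some $j=i\geq 1$, and since $\phi_0''+\phi_i''=0$ at $y$ we get
\begin{equation*}
|\beta''(y)|^2 = |(\eps_{\beta,i}\beta_i-\eps_{\beta,0}\beta_0)\phi_i''(y)|^2 \lesssim_\varsigma (\eps_{\beta,i}\beta_i-\eps_{\beta,0}\beta_0)^2\, \phi_i(y)^{2-\varsigma}.
\end{equation*}

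Finally I would invoke the monotonicity \eqref{eq:abgmono}: for $\nu|k|^{-1}\leq \kappa_0$ sufficiently small, $\eps_{\beta,i}\beta_i \gg \eps_{\beta,0}\beta_0$, so
$$
\nu(\eps_{\beta,i}\beta_i-\eps_{\beta,0}\beta_0)^2 \phi_i^{2-\varsigma} \lesssim \nu \eps_{\beta,i}^2 \beta_i^2\, \phi_i^{1-\varsigma}\cdot \phi_i \lesssim \eps_{\beta,i}\left(\frac{\nu}{|k|}\right)^{\frac{4}{i+3}} \bigl(\eps_{\beta,i}\beta_i\bigr)\,\phi_i,
$$
using $\nu \beta_i = (\nu/|k|)^{4/(i+3)}$ as in Lemma~\ref{lem:err3bis}. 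Since $\beta(y)\geq (\eps_{\beta,i}\beta_i-\eps_{\beta,0}\beta_0)\phi_i(y) \sim \eps_{\beta,i}\beta_i\phi_i(y)$ in the relevant regime, choosing either $\eps_{\beta,i}$ or $\kappa_0$ small enough absorbs the prefactor and yields the desired pointwise bound. No new obstacle arises beyond the $\phi_j''$-estimate, which is the only real content added on top of Lemma~\ref{lem:err3bis}.
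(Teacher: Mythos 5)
Your proposal is correct and follows the same route as the paper: the paper's proof of this lemma is a one-liner deferring to Lemma \ref{lem:err3bis} with the additional observation that $\abs{\phi''}\lesssim_\varsigma\abs{\phi}^{1-\varsigma}$, and you have simply written out the details of that argument — identifying the support of $\beta''$ as lying where $\abs{u'}\geq c$, using the two-term cancellation $\phi_0''=-\phi_i''$ at each point, invoking the second-derivative version of \eqref{eq:part}, and finishing via $\nu\beta_i=(\nu/\abs{k})^{4/(i+3)}$ exactly as in Lemma \ref{lem:err3bis}. Your bound $\abs{\phi_j''}^2\lesssim_\varsigma\abs{\phi_j}^{2-\varsigma}$ is the square of the paper's $\abs{\phi''}\lesssim_\varsigma\abs{\phi}^{1-\varsigma}$ after relabeling $\varsigma$, so the two are equivalent.
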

\begin{proof} 
The proof follows as in Lemma \ref{lem:err3bis} using that for $\varsigma \in (0,1)$ there holds 
\begin{align*}
\abs{\phi''} \lesssim_\varsigma \abs{\phi}^{1-\varsigma}. 
\end{align*} 
\end{proof} 

\begin{lemma}\label{lem:err6}
For $\alpha,\beta,\gamma$ chosen as described above, there holds for $\nu \abs{k}^{-1}$ sufficiently small,  
\begin{align*}
\nu\abs{k}^2 \norm{\frac{\gamma'}{\sqrt\gamma}u'\ff}^2 \leq  \frac{1}{14C_0}\abs{k}^2\norm{\sqrt{\beta} u' f}^2.
\end{align*}
\end{lemma}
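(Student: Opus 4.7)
The plan is to reduce the lemma to a pointwise estimate on $\T$, exactly as in Lemma \ref{lem:err1} and Lemma \ref{lem:err3bis}. Since $\gamma'$ is supported in the transition regions of the partition of unity, which are at distance at least $\delta$ from every critical point, there is $c>0$ depending only on $u$ with $\abs{u'(y)}\geq c$ on $\spt \gamma'$. The factor $\abs{u'}^2 \abs{\ff}^2$ appearing on both sides of the claim can then be cancelled, so the lemma will follow from the pointwise bound $\nu \abs{\gamma'(y)}^2 \leq \frac{c^2}{14 C_0} \gamma(y)\beta(y)$ on $\spt\gamma'$. At any such $y$, only $\phi_0$ and some $\phi_i$ with $i\geq 1$ are nonzero, and writing $\phi_0=1-\phi_i$ gives $\gamma'(y)=(\eps_{\gamma,i}\gamma_i-\eps_{\gamma,0}\gamma_0)\phi_i'(y)$, with analogous two-term representations for $\gamma$ and $\beta$. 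For $\nu\abs{k}^{-1}$ small, \eqref{eq:abgmono} makes $\eps_{\gamma,i}\gamma_i\gg\eps_{\gamma,0}\gamma_0$ (and similarly for $\beta$), and combining with \eqref{eq:part} applied with some small $\varsigma\in(0,1)$ yields
\[
\nu\abs{\gamma'(y)}^2\lesssim_\varsigma \nu\eps_{\gamma,i}^{2}\gamma_i^{2}\phi_i(y)^{2-\varsigma}.
\]

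Next I factor $\nu\eps_{\gamma,i}^{2}\gamma_i^{2}\phi_i^{2-\varsigma}=\bigl[\nu\eps_{\gamma,i}^{1+\varsigma}\gamma_i^{1+\varsigma}\bigr]\cdot\bigl[(\eps_{\gamma,i}\gamma_i\phi_i)^{1-\varsigma}\bigr]\cdot \phi_i$ and compute from \eqref{eq:param} the key identity
\[
\frac{\nu\gamma_i^{1+\varsigma}}{\beta_i}=\left(\frac{\nu}{\abs{k}}\right)^{\frac{2(1-i\varsigma)}{i+3}}\abs{k}^{-2\varsigma}.
\]
Provided $\varsigma<1/n_0$ the exponent of $\nu/\abs{k}$ is strictly positive for every $1\leq i\leq n_0$. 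Using $\gamma\geq \eps_{\gamma,i}\gamma_i\phi_i$ (for the middle factor) and $\eps_{\beta,i}\beta_i\phi_i\leq \beta$ (to absorb the trailing $\phi_i$ after pulling out $\eps_{\beta,i}\beta_i$), I obtain
\[
\nu\abs{\gamma'}^2\lesssim_\varsigma \eps_{\gamma,i}^{1+\varsigma}\eps_{\beta,i}^{-1}\left(\frac{\nu}{\abs{k}}\right)^{\frac{2(1-i\varsigma)}{i+3}}\abs{k}^{-2\varsigma}\,\beta\,\gamma^{1-\varsigma}.
\]

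To close the estimate I observe that on $\spt \gamma'$ there is the uniform lower bound $\gamma(y)\geq \eps_{\gamma,0}\abs{k}^{-2}$, because $\eps_{\gamma,i}\gamma_i\geq \eps_{\gamma,0}\gamma_0$ forces the linear function $\eps_{\gamma,0}\gamma_0(1-\phi_i)+\eps_{\gamma,i}\gamma_i\phi_i$ to attain its minimum at $\phi_i=0$ with value $\eps_{\gamma,0}\gamma_0=\eps_{\gamma,0}\abs{k}^{-2}$. Consequently $\gamma^{-\varsigma}\leq \eps_{\gamma,0}^{-\varsigma}\abs{k}^{2\varsigma}$, and writing $\gamma^{1-\varsigma}=\gamma\cdot\gamma^{-\varsigma}$ makes the $\abs{k}^{\pm 2\varsigma}$ factors cancel exactly, leaving
\[
\nu\abs{\gamma'}^2\lesssim_\varsigma \eps_{\gamma,i}^{1+\varsigma}\eps_{\beta,i}^{-1}\eps_{\gamma,0}^{-\varsigma}\left(\frac{\nu}{\abs{k}}\right)^{\frac{2(1-i\varsigma)}{i+3}}\gamma\beta,
\]
which is at most $\frac{1}{14C_0}\gamma\beta$ once $\kappa_0$ is chosen small enough, depending only on the fixed $\eps$'s and $\varsigma$. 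The main obstacle in carrying this out is arranging for the $\abs{k}^{\pm 2\varsigma}$ cancellation: it relies crucially on the specific scaling $\gamma_0=\abs{k}^{-2}$ at the ``generic'' part of the shear matching the factor $\abs{k}^{-2\varsigma}$ split off by $\nu\gamma_i^{1+\varsigma}/\beta_i$, and it is this matching (rather than any additional spectral-gap-type input) that makes the pointwise bound uniform in $\abs{k}$.
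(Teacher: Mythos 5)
Your proof is correct and follows essentially the same route as the paper's: reduce to a pointwise bound on $\spt\gamma'$ where $\abs{u'}\geq c$, apply \eqref{eq:part} to trade $\abs{\phi_i'}^2$ for $\phi_i^{2-\varsigma}$, and rebalance using the specific scaling $\gamma_0 = \abs{k}^{-2}$ together with $\gamma\geq\eps_{\gamma,0}\gamma_0$, $\gamma\gtrsim\eps_{\gamma,i}\gamma_i\phi_i$, $\beta\gtrsim\eps_{\beta,i}\beta_i\phi_i$, so that a positive power of $\nu\abs{k}^{-1}$ survives. One small improvement you make over the paper's writeup: you correctly track the exponent as $\frac{2(1-i\varsigma)}{i+3}$ and note the corresponding constraint $\varsigma<1/n_0$, whereas the paper writes $\frac{2(1-\varsigma)}{i+3}$ and claims any $\varsigma\in(0,1)$ works -- a minor slip since $(\gamma_i/\gamma_0)^{\varsigma}=(\nu/\abs{k})^{-2i\varsigma/(i+3)}$ contributes the factor of $i$.
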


\begin{proof} 
The lemma follows from the pointwise estimate
\begin{align*}
\nu \abs{\gamma'(y)}^2 \leq \frac{1}{14C_0} \beta(y) \gamma(y).
\end{align*}
At any given $y$, there is some $i$ such that this inequality is equivalent to
\begin{align*}
\nu \abs{\eps_{\gamma,0}\gamma_0 \phi_0' + \eps_{\gamma,i}\gamma_i \phi_i'}^2 \leq  \frac{c^2}{14C_0}\abs{\eps_{\beta,0}\beta_0 \phi_0 + \eps_{\beta,i}\beta_i \phi_i}\abs{\eps_{\gamma,0}\gamma_0 \phi_0 + \eps_{\gamma,i}\gamma_i \phi_i}. 
\end{align*}
On the support of the left-hand side we have $\phi_0 = 1 - \phi_i$ and hence we may write this as 
\begin{align}
\nu \abs{\eps_{\gamma,i}\gamma_i - \eps_{\gamma,0}\gamma_0}^2 \abs{\phi_i'}^2 \leq \frac{c^2}{14C_0}\abs{\eps_{\beta,0}\beta_0 + (\eps_{\beta,i}\beta_i - \eps_{\beta,0}\beta_0) \phi_i}\abs{\eps_{\gamma,0}\gamma_0 + (\eps_{\gamma,i}\gamma_i - \eps_{\gamma,0}\gamma_0) \phi_i}. \label{eq:err6gl} 
\end{align}
By \eqref{eq:part}, \eqref{eq:abgmono}, and \eqref{eq:param}  we have for any $\varsigma \in (0,1)$
\begin{align*}
\nu \abs{\eps_{\gamma,i}\gamma_i - \eps_{\gamma,0}\gamma_0}^2 \abs{\phi_i'}^2 & \lesssim_{\varsigma} \nu\eps_{\gamma,i}^2 \gamma_i^2 \left(\phi\right)^{2-\varsigma} \\ 
& \lesssim \eps_{\gamma,i}^2 \left(\frac{\gamma_i}{\gamma_0}\right)^{\varsigma} \gamma_0^{\varsigma} \left(\gamma_i \phi\right)^{1-\varsigma} \left(\nu \gamma_i \phi\right) \\ 
& \lesssim \eps_{\gamma,i}^2 \left(\frac{\gamma_i}{\gamma_0}\right)^{\varsigma} \gamma_0^{\varsigma} \left(\gamma_i \phi\right)^{1-\varsigma} \left(\frac{\nu}{\abs{k}} \right)^{\frac{2}{i+3}} \beta_i \phi. 
\end{align*} 
By \eqref{eq:abgmono} it follows that 
\begin{align*}
\nu \abs{\eps_{\gamma,i}\gamma_i - \eps_{\gamma,0}\gamma_0}^2 \abs{\phi_i'}^2 & \lesssim  \eps_{\gamma,i}^2  \left(\frac{\nu}{\abs{k}} \right)^{\frac{2(1-\varsigma)}{i+3}} \beta_i \phi \gamma_0^{\varsigma} \left(\gamma_i \phi\right)^{1-\varsigma}, 
\end{align*}
from which \eqref{eq:err6gl}, and hence the lemma, follows as $\varsigma \in (0,1)$ so that the exponent is positive and then fixing $\nu \abs{k}^{-1}$ sufficiently small (possibly depending on the parameters $\eps_{\gamma,j},\eps_{\beta,j}$ which are fixed prior to fixing $\nu \abs{k}^{-1}$).  
\end{proof} 

\begin{lemma}\label{lem:err7}
For $\alpha,\beta,\gamma$ chosen as described above, suppose 
\begin{align}\label{eq:constr6}
\eps_{\alpha,j}^2 \leq \frac{1}{196}\eps_{\beta,j}. 
\end{align}
Then there holds, for $\nu \abs{k}^{-1}$ sufficiently small, 
\begin{align*}
\abs{\jap{kiu'\left(\alpha - \frac{\eps_{\alpha,n_0}}{\eps_{\beta,n_0}}\widetilde\lambda_{\nu,k}\beta\right) \ff,\de_y\ff}} \leq  \frac{1}{14}\left(\nu\norm{\de_y \ff}^2 + \abs{k}^2\norm{\sqrt{\beta} u' \ff}^2\right). 
\end{align*}
\end{lemma}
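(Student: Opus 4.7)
The plan is to decompose the coefficient along the partition of unity $\{\phi_j\}_{j=0}^{n_0}$ from Section \ref{sub:parti} and estimate each piece. Writing
$$
\alpha(y) - \frac{\eps_{\alpha,n_0}}{\eps_{\beta,n_0}}\widetilde\lambda_{\nu,k}\beta(y) = \sum_{j=0}^{n_0} c_j\,\phi_j(y), \qquad c_j := \eps_{\alpha,j}\alpha_j - \frac{\eps_{\alpha,n_0}}{\eps_{\beta,n_0}}\widetilde\lambda_{\nu,k}\eps_{\beta,j}\beta_j,
$$
the key observation---the very reason for subtracting $\tfrac{\eps_{\alpha,n_0}}{\eps_{\beta,n_0}}\widetilde\lambda_{\nu,k}\beta$---is the exact algebraic cancellation at $j=n_0$. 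Indeed, using \eqref{eq:param} and \eqref{eq:prop3},
$$
\widetilde\lambda_{\nu,k}\beta_{n_0} = \nu^{\tfrac{n_0+1}{n_0+3}}|k|^{\tfrac{2}{n_0+3}}\cdot \nu^{\tfrac{1-n_0}{n_0+3}}|k|^{-\tfrac{4}{n_0+3}} = \nu^{\tfrac{2}{n_0+3}}|k|^{-\tfrac{2}{n_0+3}} = \alpha_{n_0},
$$
so $c_{n_0}=0$. This removes the contribution supported near the highest-order critical points, where the local spectral gap of Proposition \ref{LocSpec} is weakest and where an uncompensated $\eps_{\alpha,n_0}\alpha_{n_0}$ would defeat the absorption.

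For the remaining indices $j<n_0$, the same computation yields
$$
\frac{\widetilde\lambda_{\nu,k}\beta_j}{\alpha_j} = \left(\frac{\nu}{|k|}\right)^{\tfrac{2(n_0-j)}{(n_0+3)(j+3)}},
$$
a strictly positive power of $\nu|k|^{-1}$. Fixing $\kappa_0$ small relative to the (already-fixed) ratios $\eps_{\alpha,n_0}\eps_{\beta,j}/(\eps_{\beta,n_0}\eps_{\alpha,j})$ yields the crude pointwise bound $|c_j|\leq 2\eps_{\alpha,j}\alpha_j$ for every $j\in\{0,\dots,n_0-1\}$.

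With the cancellation and bound in hand, I write the inner product as $\sum_{j=0}^{n_0-1}\l iku'c_j\phi_j \ff,\de_y\ff\r$, apply Cauchy--Schwarz to each term after splitting $\phi_j=(\sqrt{\phi_j})^2$, and then Young's inequality with weight $\nu/7$:
$$
|\l iku'c_j\phi_j\ff,\de_y\ff\r| \leq \frac{7|k|^2|c_j|^2}{2\nu}\|u'\sqrt{\phi_j}\ff\|^2 + \frac{\nu}{14}\|\sqrt{\phi_j}\de_y\ff\|^2.
$$
The crucial algebraic identity $\alpha_j^2=\nu\beta_j$ (a direct consequence of \eqref{eq:param}), together with $|c_j|^2 \leq 4\eps_{\alpha,j}^2\alpha_j^2$ and the hypothesis \eqref{eq:constr6}, gives
$$
\frac{7|k|^2|c_j|^2}{2\nu} \leq 14\eps_{\alpha,j}^2|k|^2\beta_j \leq \frac{1}{14}|k|^2\eps_{\beta,j}\beta_j.
$$
Summing over $j=0,\dots,n_0-1$ and using $\sum_j \phi_j \leq 1$ (which gives both $\sum_j\|\sqrt{\phi_j}\de_y\ff\|^2\leq \|\de_y\ff\|^2$ and $\sum_j \eps_{\beta,j}\beta_j\|u'\sqrt{\phi_j}\ff\|^2\leq \|\sqrt\beta u'\ff\|^2$) produces the desired estimate.

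The only real obstacle is spotting the cancellation $\widetilde\lambda_{\nu,k}\beta_{n_0}=\alpha_{n_0}$; without it, the $j=n_0$ piece would demand the Young trade-off with ratio $\alpha_{n_0}^2/(\nu\beta_{n_0})=1$, leaving no room for a small absorption constant. The renormalization by $\tfrac{\eps_{\alpha,n_0}}{\eps_{\beta,n_0}}\widetilde\lambda_{\nu,k}\beta$ shifts that obstruction into a term proportional to $\widetilde\lambda_{\nu,k}\Phi$, to be absorbed later by the decay rate itself in the final synthesis of the differential inequality.
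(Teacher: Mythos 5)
Your proof is correct and follows essentially the same route as the paper: you spot the exact cancellation $c_{n_0}=0$ via $\widetilde\lambda_{\nu,k}\beta_{n_0}=\alpha_{n_0}$, bound $|c_j|\le 2\eps_{\alpha,j}\alpha_j$ for $j<n_0$ using $\nu|k|^{-1}$ small, and close the estimate via Cauchy--Schwarz and Young combined with the identity $\alpha_j^2=\nu\beta_j$ and the hypothesis $\eps_{\alpha,j}^2\le\tfrac{1}{196}\eps_{\beta,j}$. The only cosmetic difference is that you apply Young's inequality term-by-term in $j$ and then sum over the partition of unity, whereas the paper applies it once globally and then reduces to a pointwise inequality $\alpha(y)^2\le\tfrac{\nu}{196}\beta(y)$; both are equivalent in content.
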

\begin{proof} 
First, by \eqref{eq:abgmono}, we have 
\begin{align}
\widetilde\lambda_{\nu,k}\eps_{\beta,j}\beta_j = \eps_{\beta,j} \alpha_{n_0} \left(\frac{\beta_j}{\beta_{n_0}}\right) =  \eps_{\beta,j} \alpha_{n_0} \left(\frac{\nu}{\abs{k}}\right)^{\frac{4(n_0-j)}{(n_0 + 3)(j+3)}} = \eps_{\beta,j} \alpha_{j} \left(\frac{\nu}{\abs{k}}\right)^{\frac{2(n_0-j)}{(n_0 + 3)(j+3)}}, \label{eq:bjaetc}  
\end{align}
so that for $j < n_0$ this is small for $\nu \abs{k}^{-1}$ sufficiently small, and that the $j = n_0$ term disappears from the term we are trying to estimate. Hence, we have 
\begin{align*}
\alpha - \widetilde\lambda_{\nu,k}\beta = \sum_{j \leq n-1} \left(\eps_{\alpha,j}\alpha_j - \widetilde\lambda_{\nu,k}\eps_{\beta,j}\beta_j\right)\phi_j^2, 
\end{align*}
By \eqref{eq:bjaetc} for $j < n_0$ we have for $\nu \abs{k}^{-1}$ sufficiently small, 
\begin{align*}
\abs{\eps_{\alpha,j}\alpha_j - \widetilde\lambda_{\nu,k}\frac{\eps_{\alpha,n_0}\eps_{\beta,j}}{\eps_{\beta,n_0}}\beta_j} \leq 2\eps_{\alpha_j} \alpha_j. 
\end{align*}
Therefore, 
\begin{align*}
\abs{\jap{kiu'\left(\alpha - \frac{\eps_{\alpha,n_0}}{\eps_{\beta,n_0}}\widetilde\lambda_{\nu,k}\beta\right) \ff,\de_y\ff}} \leq \frac{\nu}{14}\norm{\de_y\ff}^2 + \frac{14}{\nu}\abs{k}^2\norm{\alpha u' \ff}^2. 
\end{align*}
It hence suffices to prove the pointwise inequality 
\begin{align*}
\alpha(y)^2 \leq \frac{\nu}{196} \beta(y).  
\end{align*} 
It follows from the definitions in Section \ref{sec:abgdef} that, 
\begin{align*}
\eps_{\alpha,j}^2\alpha_j^2 & = \eps_{\alpha,j}^2 \nu^{\frac{4}{j+3}} \abs{k}^{-\frac{4}{j+3}} = \left(\frac{\eps_{\alpha,j}^2}{\eps_{\beta,j}}\right) \eps_{\beta,j} \nu^{1 + \frac{1-j}{j+3}} \abs{k}^{-\frac{4}{j+3}} = \left(\frac{\eps_{\alpha,j}^2}{\eps_{\beta,j}}\right) \eps_{\beta,j} \beta_j, 
\end{align*}
and hence, the lemma is proved provided 
\begin{align*}
\eps_{\alpha,j}^2 \leq \frac{1}{196}\eps_{\beta,j}. 
\end{align*}
\end{proof}

\subsection{Proof of Theorem \ref{thm:decay1}}\label{sub:mainproof}
To conclude the proof of Theorem \ref{thm:decay1}, we collect all of the above estimates and check 
that the constraints given by the inequalities \eqref{eq:constr2}, \eqref{eq:constr4}, \eqref{eq:constr5}, \eqref{eq:constr6} are consistent. 
This corresponds to a proper choice 
of $\eps_{\alpha,j},\eps_{\beta,j},\eps_{\gamma,j}$ so that
\begin{equation}\label{eq:con1}
\beta(y)^2\leq\frac{1}{2C_0} \alpha(y)\gamma(y), \qquad \forall y\in \T,
\end{equation}
and
\begin{equation}\label{eq:con2}
\eps_{\beta,j}\ll 1, \qquad \eps_{\gamma,j} \ll \eps_{\beta,j}^\frac{j}{j+1},\qquad
\eps_{\alpha,j}^2 \ll \eps_{\beta,j}. 
\end{equation}
In view of \eqref{eq:constr3}, namely
$$
\beta_j^2=\alpha_j\gamma_j,\qquad \forall j\in\{0,\ldots,n_0\},
$$
we have that
$$
\beta(y)^2=\left[\sum_{j=0}^{n_0}\eps_{\beta,j}\beta_j\phi_j(y)\right]^2\leq 2\sum_{j=0}^{n_0}\eps_{\beta,j}^2\beta_j^2\phi_j(y)^2=
2\sum_{j=0}^{n_0}\eps_{\beta,j}^2\alpha_j\gamma_j\phi_j(y)^2.
$$
On the other hand,
$$
\alpha(y)\gamma(y)=\left[\sum_{j=0}^{n_0}\eps_{\alpha,j}\alpha\phi_j(y)\right]\left[\sum_{j=0}^{n_0}\eps_{\gamma,j}\gamma_j\phi_j(y)\right]
\geq \sum_{j=0}^{n_0}\eps_{\alpha,j}\eps_{\gamma,j}\alpha_j\gamma_j\phi_j(y)^2,
$$
so that \eqref{eq:con1} can be satisfied by imposing that
\begin{equation}\label{eq:con3}
\frac{\eps^2_{\beta_j}}{\eps_{\alpha,j}\eps_{\gamma,j}}\leq \frac{1}{4C_0}.
\end{equation}
We now prove the
decay estimate \eqref{eq:prop2} and fix the constants $\eps_{\alpha,j},\eps_{\beta,j}, \eps_{\gamma,j}$.
From \eqref{eq:esti1} and the lemmata of Section \ref{sec:err} we arrive at
\begin{align}\label{eq:esti2}
\ddt \Phi \leq -\frac{\nu}{2}\|\de_y\ff\|^2- \frac{\eps_{\alpha,n_0}}{\eps_{\beta,n_0}}\widetilde{\lambda}_{\nu,k} k\Re \l i \beta u'\ff,\de_y\ff\r -  \frac{1}{2}|k|^2\|\sqrt\beta u'\ff\|^2
\end{align}
We further use of \eqref{eq:spectrga}, which can be written explicitly  as
$$
\nu^\frac{j+1}{j+3}|k|^\frac{2}{j+3}\eps_{\beta,j}^\frac{1}{j+1}\|\ff_j\|^2\lesssim \nu \|\de_y\ff_j\|^2+|k|^2\beta_j\eps_{\beta,j}\|u'\ff_j\|^2.
$$
Upon summing over $j=0,\ldots,n_0$ and applying Lemma \ref{lem:unwritten} once more, we deduce that
$$
\widetilde{\lambda}_{\nu,k}\|\ff\|^2=\nu^\frac{n_0+1}{n_0+3}|k|^\frac{2}{n_0+3}\|\ff\|^2\lesssim \nu \|\de_y\ff\|^2+|k|^2\|\sqrt\beta u'\ff\|^2.
$$
From \eqref{eq:esti2} we then have for some $c > 0$, 
\begin{align}\label{eq:esti3}
\ddt \Phi \leq -c\widetilde{\lambda}_{\nu,k}\|\ff\|^2- \frac{1}{4}\nu\|\de_y\ff\|^2 - \frac{\eps_{\alpha,n_0}}{\eps_{\beta,n_0}}\widetilde{\lambda}_{\nu,k} k\Re \l i \beta u'\ff,\de_y\ff\r - \frac{1}{4}|k|^2\|\sqrt\beta u'\ff\|^2.
\end{align}
From the expressions of the functions $\alpha,\beta$ and $\gamma$ we also have that
$$
\widetilde\lambda_{\nu,k}\alpha(y)\leq \nu, \qquad \widetilde\lambda_{\nu,k}\gamma(y)\leq \max_{j}\left[\frac{\eps_{\gamma,j}}{\eps_{\beta,j}}\right]\beta(y),
$$
so that 
\begin{align}\label{eq:esti32}
\ddt \Phi \leq -c\widetilde{\lambda}_{\nu,k}\|\ff\|^2- \frac{\widetilde{\lambda}_{\nu,k}}{4}\|\sqrt{\alpha}\de_y\ff\|^2 - \frac{\eps_{\alpha,n_0}}{\eps_{\beta,n_0}}\widetilde{\lambda}_{\nu,k} k\Re \l i \beta u'\ff,\de_y\ff\r - \frac{\widetilde{\lambda}_{\nu,k}}{4}\max_{j}\left[\frac{\eps_{\gamma,j}}{\eps_{\beta,j}}\right]|k|^2\|\sqrt\beta u'\ff\|^2.
\end{align}
We now fix all the constants. Let
$$
\eps_{\gamma,j}=\widetilde \eps\, \eps_{\beta,j}^\frac{j}{j+1}, \qquad \eps_{\alpha,j}=\widetilde \eps\, \eps_{\beta,j},
$$
where $\widetilde \eps \ll 1$ is sufficiently small to satisfy the second two inequalities in \eqref{eq:con2}. Notice that
since $\eps_{\beta,j}<1$, we have
$$
\frac{\eps_{\gamma,j}}{\eps_{\beta,j}}\geq 4\frac{\eps_{\alpha,n_0}}{\eps_{\beta,n_0}}=4\widetilde \eps .
$$
Finally, choose $\eps_{\beta,j}$ small enough so that
$$
\eps_{\beta,j}\ll (\widetilde{\eps})^2\eps_{\beta,j}^\frac{j}{j+1},
$$
which implies that \eqref{eq:con3} is satisfied.
Hence, we deduce the differential inequality
\begin{align}\label{eq:esti4}
\ddt \Phi  \leq - \widetilde{\eps} \,\widetilde{\lambda}_{\nu,k}\Phi,
\end{align}
from which \eqref{eq:prop2} follows. Lastly, from the explicit expressions of $\alpha_j,\beta_j,\gamma_j$ given by \eqref{eq:param}, the equivalence \eqref{eq:prop1} of $\Phi$ with the $H^1$-norm follows, 
concluding the proof of the theorem.

\section{Semigroup decay estimates}\label{sec:L2decay}
In this section, we apply Theorem \ref{thm:decay1} to prove Theorem \ref{thm:maindecay}. 
As we will see, the linear operators we are dealing with are not uniformly sectorial, which makes this procedure non-trivial. 
We recall the following from  \cite{GallagherGallayNier2009}*{Lemma 1.1, part (iii)}.
\begin{lemma}[From \cite{GallagherGallayNier2009}] \label{lem:GGN} 
Let $H:D(H) \rightarrow L^2$ be maximal accretive and sectorial, that is the numerical range is contained in a sector, meaning for some $\delta > 0$ we have  
\begin{align*}
\Theta(H) = \set{\l Hf,f\r \in \Complex: \norm{f} = 1} \subset \set{z \in \Complex: \abs{\arg z} \leq \pi/2 - 2\delta}, 
\end{align*} 
and define 
\begin{align*}
\Psi(H) & = \left(\sup_{\lambda \in \Real} \norm{ \left(H - i\lambda\right)^{-1} }\right)^{-1}. 
\end{align*}
Then, there exists a universal $C > 0$ such that 
\begin{align}
\norm{\e^{-Ht}}_{L^2 \rightarrow L^2} \leq \frac{C}{\tan \delta} \e^{-\frac{1}{2}\Psi(H)t}. \label{ineq:GGNineq}
\end{align}
\end{lemma}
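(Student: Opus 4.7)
The strategy is the Gearhart--Pr\"uss pseudo-spectral argument in a Hilbert space: combine the Paley--Wiener representation of the semigroup $T(t) := e^{-Ht}$ with the resolvent bounds supplied by the two hypotheses to get a weighted $L^2$-in-time estimate, then convert it to a pointwise bound using the contractivity of $T$ (which follows from maximal accretivity).

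First, extend the resolvent bound off the imaginary axis. Writing $H - \eta + i\mu = (H + i\mu)(I - \eta (H+i\mu)^{-1})$ and running the Neumann series (valid as long as $\eta < \Psi(H)$) yields the pointwise bound $\|(H-\eta+i\mu)^{-1}\| \leq (\Psi(H)-\eta)^{-1}$ for every $\mu \in \Real$. This alone is not integrable in $\mu$, so one exploits sectoriality: $\|(H-z)^{-1}\| \leq \mathrm{dist}(z, \Theta(H))^{-1}$ for $z \notin \overline{\Theta(H)}$, and applied at $z = \eta - i\mu$ this produces $\|(H-\eta+i\mu)^{-1}\| \lesssim (|\mu|\sin 2\delta)^{-1}$ for $|\mu|$ large compared to $\eta\cot 2\delta$. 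Splitting the $\mu$-integral at the crossover and taking the minimum of the two bounds yields
\begin{align*}
\frac{1}{2\pi}\int_{-\infty}^{\infty} \|(H-\eta+i\mu)^{-1} f\|^2\, d\mu \lesssim \frac{\|f\|^2}{\sin(2\delta)\,(\Psi(H)-\eta)}.
\end{align*}
The Hilbert-valued Paley--Wiener theorem, applied to the holomorphic function $z \mapsto (z+H)^{-1} f$ on the half-plane $\{\Re z > -\Psi(H)\}$, then identifies the left side as $\int_0^\infty e^{2\eta t}\|T(t) f\|^2\, dt$. To pass to a pointwise bound, contractivity gives $\|T(t)f\|^2 \leq (2/t)\int_{t/2}^t \|T(s)f\|^2\, ds$; inserting the bound comparable to $e^{-2\eta t}$ between $[t/2,t]$ and choosing $\eta = \Psi(H)/2$ converts the weighted $L^2$ estimate into $\|T(t)f\| \lesssim (\sin 2\delta)^{-1/2}\, e^{-\Psi(H)t/2} \|f\|$, which implies the claim (using $\sin 2\delta \gtrsim \tan \delta$ for $\delta$ small and absorbing constants otherwise).

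The main technical obstacle is the rigorous Paley--Wiener step: $\int_0^\infty e^{-zt} T(t) f\, dt$ converges absolutely only for $\Re z > 0$, so one must show that its analytic extension to $\{\Re z > -\Psi(H)\}$ still agrees with $(z+H)^{-1} f$ and produces an $L^2$ inverse Fourier--Laplace representation. The standard workaround is to replace $H$ by $H + \varepsilon$ (whose semigroup decays exponentially, so Plancherel is immediate), carry out the estimates, and pass $\varepsilon \to 0$; since the resolvent bounds from the two preceding steps depend stably on $\Psi(H)$ and $\delta$, the constants survive the limit.
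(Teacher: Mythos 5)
Your proposal takes a genuinely different route from the paper: you use a quantitative Gearhart--Pr\"uss argument based on the Plancherel theorem, whereas the paper (following \cite{GallagherGallayNier2009}) writes the semigroup as a Dunford integral $\e^{-Ht} = \frac{1}{2\pi i}\int_\Gamma \e^{-zt}(z-H)^{-1}\,\d z$ along a contour to the left of the sector, estimates $(z-H)^{-1}$ by $1/\mathrm{dist}(z,\Theta(H))$ along the sectorial arms and by $\sim \Psi(H)^{-1}$ along a vertical segment at $\mathrm{Re}\,z = \Psi(H)/2$, and integrates directly. Sectoriality is exactly what makes the contour integral absolutely convergent, so the paper's approach is quite clean; the Gearhart--Pr\"uss method is more general (no sectoriality required to get exponential decay from a pseudo-spectral bound) but pays for it in the Paley--Wiener step, which is the most delicate part and, in your sketch, is also the place where the argument currently has problems.

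Two concrete issues. First, the $\varepsilon$-shift does not justify the Plancherel identity. Replacing $H$ by $H+\varepsilon$ gives the semigroup $\e^{-\varepsilon t}T(t)$, which decays only at rate $\varepsilon$, so absolute convergence of the Fourier--Laplace integral at the shift $\eta$ requires $\eta < \varepsilon$, whereas you need $\eta$ up to $\Psi(H)$; letting $\varepsilon \to 0$ then only recovers $\eta < 0$. The standard rigorous route is truncation in time: for $G_T(t) = \e^{\eta t}T(t)f\,\mathbf{1}_{[0,T]}$ one has $\widehat{G_T}(\mu) = (H-\eta+i\mu)^{-1}\bigl(f - \e^{(\eta-i\mu)T}T(T)f\bigr)$, bounds the resulting extra term, and passes $T \to \infty$; alternatively, one can exploit that $-H$ generates a bounded analytic semigroup (which follows from sectoriality) to shift the Laplace inversion contour directly, but that is again essentially the paper's argument.

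Second, the $[t/2,t]$ localization loses a factor of two in the exponent. From $\|T(t)f\|^2 \leq \|T(s)f\|^2$ for $s \leq t$ and $\e^{2\eta s} \geq \e^{\eta t}$ on $[t/2,t]$ one gets
\begin{align*}
\|T(t)f\|^2 \leq \frac{2}{t}\int_{t/2}^t\|T(s)f\|^2\,\d s \leq \frac{2}{t}\e^{-\eta t}\int_0^\infty \e^{2\eta s}\|T(s)f\|^2\,\d s,
\end{align*}
which yields $\|T(t)f\| \lesssim \e^{-\eta t/2}$; with $\eta = \Psi(H)/2$ this is $\e^{-\Psi(H)t/4}$, not the claimed $\e^{-\Psi(H)t/2}$. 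The fix is to integrate the contraction inequality over $[0,t]$ against the weight:
\begin{align*}
\|T(t)f\|^2 \cdot \frac{\e^{2\eta t}-1}{2\eta} \leq \int_0^t \e^{2\eta s}\|T(s)f\|^2\,\d s \leq A,
\end{align*}
which gives $\|T(t)f\| \lesssim \sqrt{\eta A}\,\e^{-\eta t}$ for $\eta t \gtrsim 1$ (and the contraction itself for $\eta t \lesssim 1$). With $\eta = \Psi(H)/2$ and your resolvent integral bound this produces a constant of order $(\sin 2\delta)^{-1/2} \lesssim (\tan\delta)^{-1}$ and the correct rate $\e^{-\Psi(H)t/2}$. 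With those two repairs the argument goes through; as written it does not.
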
 
The proof of Lemma \ref{lem:GGN} follows from representing the semigroup as an integration over a contour encircling the sector containing the numerical range; see \cite{GallagherGallayNier2009}. 
The quantity $\Psi$ is a measure of how large the resolvent is on the imaginary axis, which provides a measure of how close the \emph{pseudo-spectrum} is to the imaginary axis.
Recall that the pseudo-spectrum is basically ``the set where the norm of the resolvent is very large'' -- see e.g. \cite{Trefethen2005} for an in-depth discussion. 
Let
\begin{align*}
L_{k,\nu}  = iku - \nu\left(\partial_{yy} - \abs{k}^2\right),\qquad R_{k,\nu}  = iku - \nu \partial_{yy}, 
\end{align*}
be the linear operators associated with the $k$-th Fourier projections of \eqref{eq:passive} and \eqref{eq:hypopassive}, associated to the linear semigroups 
\begin{align*}
\e^{-tL_{k,\nu}}  = S_\nu(t)P_k, \qquad \e^{-tR_{k,\nu}}  = R_\nu(t)P_k. 
\end{align*}
While for \emph{fixed} $\nu$ and $k$, $L_{k,\nu}$ is sectorial (and maximal accretive), it is not \emph{uniformly} sectorial. 
Indeed,  
\begin{align*}
\l L_{\nu,k}f,f\r = \nu\norm{\partial_y f}^2 + \nu\abs{k}^2\norm{f}^2 + ik \l uf,f\r. 
\end{align*}
Since,
\begin{align*} 
\abs{k\l uf,f\r} & \leq \abs{k} \norm{u}_{L^\infty}\norm{f}^2, 
\end{align*} 
the numerical range is contained in the sector given by angle $\delta$ with $\tan \delta \approx \norm{u}_{L^\infty}^{-1}\abs{k}^{-1}\nu\max\left(1,\abs{k}^2\right)$ (clearly for $\abs{k} \gtrsim 1$ such as the case of $\T$, this reduces to $\tan \delta \approx \norm{u}_{L^\infty}^{-1}\abs{k}\nu$). 
Therefore, Lemma \ref{lem:GGN} gives the following estimates for some $C>0$, (using also that the semigroup is bounded):  
\begin{align*}
&\norm{\e^{-L_{k,\nu}t}}_{L^2 \rightarrow L^2} \leq  \min\left\{1,\frac{C\abs{k}}{\nu\left(1 + k^2\right)}\e^{-\frac{1}{2}\Psi(L_{k,\nu})t}\right\}. 
\end{align*}
For fixed $\nu$ and $k$, $R_{k,\nu}$ is not technically sectorial, however, by Poincar\'e's inequality, it is clear that the only potential problems in $\l R_{k,\nu}f,f \r$ arise when $f$ is close to being a constant (for $f$ far from constant, the numerical range is contained in a sector of angle $\delta$ such that $\tan \delta \approx \nu \abs{k}^{-1}$ analogous to the elliptic case).   
Indeed, given an $f$ with $\norm{f} = 1$, write $a = \l f \r_{D}$ (the $y$-average of $f$) and note by $\int_D u(y) \d y = 0$ we have, 
\begin{align}
\l R_{k,\nu }f,f\r = \nu \norm{\partial_y f}^2 + ik\brak{u(f-a),f-a} + ik\brak{u(f-a),a} + ik \brak{ua,f-a}. 
\end{align}
Since $\|f\|=1$, the latter three terms are bounded by
\begin{align}
\abs{ik\brak{u(f-a),f-a} + ik\brak{u(f-a),a} + ik \brak{ua,f-a}} \lesssim \norm{f-a}\left(1-\norm{f-a}^2\right)^{1/2}. 
\end{align}
Since by Poincar\'e's inequality, there holds $\nu \norm{\partial_y f}^2 \gtrsim \nu \norm{f-a}^2$, it follows that near zero, the numerical range (and hence the spectrum) is contained in a parabola $\abs{\textup{Im} z} \leq C_1 \nu \abs{k}^{-1} \abs{\textup{Re} z}^2$, for some constant $C_1$ depending only on $u$, rather than a sector. 
Hence, Lemma \ref{lem:GGN} technically does not directly apply, however, it is straightforward to check that the proof in \cite{GallagherGallayNier2009}
adapts to this case by drawing the contour $\Gamma$ with (note that $\Psi$ gives an estimate on the real part of the spectrum):
\begin{align}
\Gamma_0 & = \set{z \in \Complex : \textup{Re}\, z = -\frac{1}{2}\Psi, \abs{\textup{Im}\, z} \leq \frac{1}{4}C_1 \nu \Psi^2} \\
\Gamma_1 & = \set{z \in \Complex : \textup{arg}\, z = \pm \abs{k}(2C_1 \nu \Psi)^{-1}, \textup{Re}\, z \leq -\frac{1}{2}\Psi}. 
\end{align}
It then follows that
\begin{align}
&\norm{\e^{-R_{k,\nu}t}}_{L^2 \rightarrow L^2} \leq  \min\left\{1,\frac{C\abs{k}}{\nu \Psi(R_{k,\nu})}\e^{-\frac{1}{2}\Psi(R_{k,\nu})t}\right\}. 
\end{align}
By considering separately times $t \lesssim \Psi^{-1}\left(1-\log\nu\right)$ and $t \gg \Psi^{-1}\left(1-\log\nu\right)$ for $L_{k,\nu}$ 
and times $t \lesssim \Psi^{-1}\left(1+\log\abs{k}\nu^{-1}\Psi^{-1}\right)$ and $t \gg \Psi^{-1}\left(1+\log\abs{k}\nu^{-1}\Psi^{-1}\right)$ for $R_{k,\nu}$, we get the following for some $c > 0$,
\begin{subequations} \label{ineq:semigrp}
\begin{align}
&\norm{\e^{-L_{k,\nu}t}}_{L^2 \rightarrow L^2} \lesssim  \e^{-\frac{c\Psi(L_{k,\nu})}{1+\log \nu^{-1}}  t}, \\ 
&\norm{\e^{-R_{k,\nu}t}}_{L^2 \rightarrow L^2} \lesssim \e^{-\frac{c\Psi(R_{k,\nu})}{1 + \log\nu^{-1} + \log\abs{k} - \log \Psi(R_{k,\nu})} t}.
\end{align}
\end{subequations}
One can take the same bound on both semigroups, possibly after adjusting $c$, as is eventually done in Theorem \ref{thm:maindecay} (only using $\nu \ll k$)

Naturally, the main remaining step is to estimate $\Psi$; for this step the result is the same regardless if one is considering $R_{k,\nu}$ or $L_{k,\nu}$, so let us just consider the latter. 
By the resolvent formula 
\begin{align*}
\left(L_{k,\nu} - z\right)^{-1} = \int_0^\infty \e^{-tL_{\nu,k}} \e^{tz} \d t, 
\end{align*}
applied for $z \in i\Real$, we get 
\begin{align}
\Psi(L_{k,\nu})^{-1} & \leq \int_0^\infty \norm{\e^{-tL_{\nu,k}}}_{L^2 \rightarrow L^2} \d t. \label{eq:ResolvePsi}
\end{align}
It remains to get a good estimate on the right-hand side of \eqref{eq:ResolvePsi}, which is where the augmented energy $\Phi$ will be used.
For each $\nu$ and $k$, we have the standard dissipation estimate, 
\begin{align*}
\nu \int_0^{\nu^{1/2} \abs{k}^{-1/2}}\norm{\partial_y \ff_k(\tau)}^2 \d\tau \leq \norm{P_k \ff_{in}}^2. 
\end{align*}
Therefore, there is some time $\tau_R \in (0,\nu^{1/2}\abs{k}^{-1/2})$ such that 
\begin{align*}
\norm{\partial_y \ff_k(\tau_R)}^2 \leq \frac{\abs{k}^{1/2}}{\nu^{3/2}}\norm{P_k \ff_{in}}^2. 
\end{align*} 
Hence by \eqref{eq:prop1} and that $\Phi$ is monotone decreasing in time by \eqref{eq:prop2}, 
there are exponents $r = r(n_0)$ and $p = p(n_0)$ such that
\begin{align*}
\Phi(\nu^{1/2}\abs{k}^{-1/2}) \leq \Phi(\tau_R) \lesssim \nu^{-r} \abs{k}^{p} \norm{P_k \ff_{in}}^2.    
\end{align*}
By \eqref{eq:prop2}, it follows that 
\begin{align*}  
\Phi(t + \nu^{1/2}\abs{k}^{-1/2}) \leq \e^{-\widetilde\lambda_{\nu,k} t} \Phi(\nu^{1/2} \abs{k}^{-1/2}), 
\end{align*}
where $\widetilde\lambda_{\nu,k}$ is given in \eqref{eq:prop3}. 
As $\norm{P_k \ff(t)}^2 \leq \Phi(t)$, it follows that 
\begin{align*}
\norm{\e^{-(t + \nu^{1/2}\abs{k}^{1/2})L_{\nu,k}}P_k \ff_{in}}^2 \leq \Phi(t+\nu^{1/2} \abs{k}^{-1/2}) \leq \e^{-\widetilde\lambda_{\nu,k} t} \Phi(\nu^{1/2}\abs{k}^{-1/2}) \lesssim \nu^{-r}\abs{k}^p \e^{-\widetilde\lambda_{\nu,k} t} \norm{P_k \ff_{in}}^2.
\end{align*}
We now use this to estimate the right-hand side of \eqref{eq:ResolvePsi}. 
Indeed, there is some $C$ depending only on $u$ such that (using again that $\nu \abs{k}^{-1}$ is sufficiently small), 
\begin{align*}
\Psi(L_{k,\nu})^{-1} \leq \int_0^\infty \norm{\e^{-tL_{\nu,k}}}_{L^2 \rightarrow L^2} \d t & = \left(\int_0^{\nu^{1/2}\abs{k}^{-1/2}} + \int_{\nu^{1/2}\abs{k}^{-1/2}}^\infty\right) \norm{\e^{-tL_{\nu,k}}}_{L^2 \rightarrow L^2} \d t \\ 
& \leq \nu^{1/2}\abs{k}^{-1/2} + \int_{0}^\infty \norm{\e^{-(t + \nu^{1/2}\abs{k}^{-1/2})L_{\nu,k}}}_{L^2 \rightarrow L^2} \d t \\ 
& \leq \nu^{1/2}\abs{k}^{-1/2} + \int_{0}^\infty \min\left(1, C\nu^{-r} \abs{k}^{p} \e^{-\widetilde\lambda_{\nu,k} t}\right) \d t \\
& \lesssim \widetilde\lambda_{\nu,k}^{-1}\left(1 + \log\left(\frac{C\abs{k}^p}{\nu^r}\right)\right) \\ 
& \lesssim \widetilde\lambda_{\nu,k}^{-1}\left(1 + \log \abs{k} + \log \nu^{-1}\right). 
\end{align*}
By similar arguments we get the same estimate on $\Psi(R_{k,\nu})$: 
\begin{align*}
\Psi(L_{k,\nu})^{-1} \lesssim \widetilde\lambda_{\nu,k}^{-1}\left(1 + \log \abs{k} + \log \nu^{-1}\right). 
\end{align*}
Together with the estimates in \eqref{ineq:semigrp}, these estimates on $\Psi$ conclude the proof of Theorem \ref{thm:maindecay}.  
It is clearly unlikely to be possible to remove the logarithmic losses in the rate using this proof; one can perhaps remove one power of the logarithmic losses using slightly more precise micro-local analysis arguments as in \cite{GallagherGallayNier2009}, but this will not correct the logarithmic losses due to the growing numerical range of the operators. 
To deal with the numerical range in the proof of Lemma \ref{lem:GGN}, a more precise control over the contour near the imaginary axis would be required.  

\section{The channel case}\label{sec:chan}
The proof of Theorem \ref{thm:channeldecay} follows a similar general scheme as the proof of Theorem \ref{thm:maindecay}, with one extra set of details to deal with the boundaries. 
In this section we just sketch the main steps which are different in the case of the channel. 
As discussed in Remark \ref{rmk:Channel}, the differential inequality \eqref{eq:der} is slightly altered to \eqref{eq:bdryder} in the case of the channel. 
As in the work above, one derives \eqref{eq:esti1} except with the additional boundary term 
\begin{align} 
2\nu k\left[i\beta u'\ff\de_{yy}\ff\,\right]\Big|_{y=0}^1. \label{def:bdy}
\end{align} 
If $u'$ vanishes on the boundary then this term is automatically eliminated -- this case is much easier than if $u'$ does not vanish on the boundary as then one can then proceed with essentially the same proof that was applied in the $\Torus^2$ case above (see also Remark \ref{rmk:upvanish} below). 
We will here only consider the harder case of $u'(0) \neq 0$ and $u'(1) \neq 0$. 
In this case, we will not be able to control this term with energy estimates, hence instead we will choose $\beta(0) = \beta(1) = 0$. 
To this end, we begin by editing the partition of unity defined in Section \ref{sub:parti}. 
Set 
\begin{align*}
\delta = \min_{i \neq j} \left(\frac{\abs{\bar{y}_i - \bar{y}_j}}{8}, \frac{\abs{\bar{y}_i}}{8}, \frac{\abs{\bar{y}_i - 1}}{8}, \frac{1}{8}\right). 
\end{align*}
Define $\widetilde{\phi}_i$ as in \eqref{eq:phii} for $1 \leq i \leq N$. Now further define, 
\begin{align}\label{def:phibi}
\widetilde{\phi}_{b,0}(y)  = \phi\left(\frac{y}{\delta}\right), \qquad
\widetilde{\phi}_{b,1}(y)  = \phi\left(\frac{y-1}{\delta}\right), 
\end{align}
and then 
\begin{align*}
\widetilde\phi_0(y)= 1- \widetilde{\phi}_{b,0}(y) - \widetilde{\phi}_{b,1}(y)  - \sum_{i=1}^{N}\widetilde\phi_i(y). 
\end{align*} 
Define $\phi_j$ as in \eqref{eq:tildephi}, $\phi_0 = \widetilde{\phi}_0$, and 
\begin{align*}
\phi_b(y) = \widetilde{\phi}_{b,0}(y) + \widetilde{\phi}_{b,1}(y). 
\end{align*}
It is straightforward to verify that this partition of unity satisfies the properties analogous to those outlined in Lemma \ref{lem:part2}. 
For the weights $\alpha,\beta,\gamma$ we now define 
\begin{align*}
&\alpha(y)= \eps_{\alpha,1}\alpha_1 y^2 \widetilde{\phi}_{b,0} + \eps_{\alpha,1} \alpha_1 (1-y^2)\widetilde{\phi}_{b,1} +  \sum_{j=0}^{n_0}\eps_{\alpha,j}\alpha_j\phi_j(y), \\ 
&\beta(y)= \eps_{\beta,1}\beta_1 y^2 \widetilde{\phi}_{b,0} + \eps_{\beta,1} \beta_1 (1-y^2)\widetilde{\phi}_{b,1} + \sum_{j=0}^{n_0}\eps_{\beta,j}\beta_j\phi_j(y), \\ 
&\gamma(y)= \eps_{\gamma,1}\gamma_1 y^2 \widetilde{\phi}_{b,0} + \eps_{\gamma,1} \gamma_1 (1-y^2)\widetilde{\phi}_{b,1} + \sum_{j=0}^{n_0}\eps_{\gamma,j}\gamma_j\phi_j(y), 
\end{align*}
where $\alpha_i,\beta_i,\gamma_i$ are defined as in \eqref{eq:param} and $\eps_{\alpha,j},\eps_{\beta,j},\eps_{\gamma,j}>0$ are small parameters, \emph{independent} of $\nu$ and $k$. 
As the weights now vanish on the boundary, these choices clearly eliminate \eqref{def:bdy} from \eqref{eq:bdryder} and hence it remains to see how to deal with remaining the positive error terms in \eqref{eq:bdryder} (and also to derive the final differential inequality, the analogue of \eqref{eq:esti4}, as  done in Section \ref{sub:mainproof}).  
Naturally, we will need the analogue of Proposition \ref{LocSpec} for the boundary, which we state without proof as it follows analogously. 

\begin{proposition} \label{LocSpec_Halfspace}
Let $u$ satisfy the hypotheses of Theorem \ref{thm:maindecay}, $u^\prime(0) \neq 1$, and let $\widetilde{\phi}_{b,0}$ be as defined above in \eqref{def:phibi}. 
The following estimate holds for all $\sigma > 0$ and $f:[0,\infty) \rightarrow \Complex$ which satisfy $f_y(0) = 0$ (denoting $f_{b} = f \sqrt{\widetilde{\phi}_{b,0}}$),
\begin{align}\label{eq:LocSpecbdy}
\sigma^{1/2} \norm{f_b}^2 \lesssim \sigma \norm{\partial_y f_b}^2 + \norm{y u' f_b}^2. 
\end{align}
By reflection, an analogous statement is true for $\widetilde{\phi}_{b,1}$. 
\end{proposition}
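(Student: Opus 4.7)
The plan is to mimic the proof of Proposition \ref{LocSpec} almost verbatim, replacing the critical-point analysis with a half-line analysis where the endpoint $y=0$ plays the role of a fictitious critical point of order exactly $1$. The key observation is that since $u'(0)\neq 0$, on the support of $\widetilde{\phi}_{b,0}$ we have $|u'(y)|\geq c>0$ for some constant $c$, so the multiplier $yu'$ vanishes to order $1$ at the boundary, exactly as $u'$ would at an order-$1$ critical point in the interior. This is why the predicted exponent is $\sigma^{1/2}=\sigma^{j/(j+1)}$ with $j=1$, in agreement with the role of the boundary as a degree-$1$ degeneracy explained in the discussion following the statement of Theorem \ref{thm:channeldecay}.

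First, I would observe that $f_b := f\sqrt{\widetilde{\phi}_{b,0}}$ still satisfies the Neumann boundary condition $\partial_y f_b(0)=0$: by construction (see \eqref{def:phibi}), $\widetilde{\phi}_{b,0}(y)\equiv 1$ on $[0,\delta]$, so $\sqrt{\widetilde{\phi}_{b,0}}$ is constant near $y=0$ and the Neumann condition transfers from $f$ to $f_b$. Moreover $f_b$ is compactly supported in $[0,2\delta]$, so one may view it as an element of $H^1([0,\infty))$ with Neumann data at $0$.

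Next I would establish the unrescaled spectral gap: there exists $b=b(c)>0$ such that for every $g\in H^1([0,\infty))$ with $\partial_z g(0)=0$,
\begin{equation*}
\|\partial_z g\|_{L^2(0,\infty)}^2 + c^2\|z\,g\|_{L^2(0,\infty)}^2 \geq b\,\|g\|_{L^2(0,\infty)}^2.
\end{equation*}
This is the half-line harmonic oscillator with Neumann boundary: the operator $-\partial_{zz}+c^2z^2$ with Neumann data at $0$ is self-adjoint on $L^2(0,\infty)$ with compact resolvent and its ground state coincides with the (even) Gaussian ground state of the full-line harmonic oscillator, so its first eigenvalue is strictly positive. (Equivalently, one may extend $g$ evenly to $\mathbb{R}$ and apply the standard full-line estimate \eqref{eq:standard} with $j=1$.)

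Finally, I would perform the same rescaling as in the proof of Proposition \ref{LocSpec}: set $y=\lambda^{-1}z$ and $g(z)=f_b(\lambda^{-1}z)$ (which still satisfies $\partial_z g(0)=0$). A direct change of variables yields
\begin{equation*}
\sigma\|\partial_y f_b\|^2 + c^2\|y f_b\|^2 \geq b\,\sigma^{1/2}\|f_b\|^2,
\end{equation*}
upon choosing $\lambda^{-4}=\sigma$. Using $|u'|\geq c$ on the support of $\widetilde{\phi}_{b,0}$ to replace $\|yf_b\|$ by $\|yu'f_b\|$ gives \eqref{eq:LocSpecbdy}. The case of $\widetilde{\phi}_{b,1}$ follows by the reflection $y\mapsto 1-y$. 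The only subtle point, and the one worth checking carefully, is that the Neumann boundary condition really is preserved by multiplication by $\sqrt{\widetilde{\phi}_{b,0}}$; this is exactly why the cutoff $\widetilde{\phi}_{b,0}$ is taken to be identically $1$ on a neighborhood of the boundary rather than merely smooth there.
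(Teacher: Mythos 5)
Your proof is correct and is exactly the argument the paper has in mind: the paper states Proposition \ref{LocSpec_Halfspace} without proof, remarking only that "it follows analogously" to Proposition \ref{LocSpec}, and your even-reflection reduction to the full-line oscillator estimate \eqref{eq:standard} with $j=1$, combined with the same anisotropic rescaling $\lambda^{-4}=\sigma$, is precisely that analogy carried out. You have also correctly identified the one point that genuinely needs checking, namely that the Neumann condition passes from $f$ to $f_b$ because $\widetilde{\phi}_{b,0}\equiv 1$ on $[0,\delta]$, and that $|u'|\geq c$ on $\operatorname{spt}\widetilde{\phi}_{b,0}$ (the hypothesis $u'(0)\neq 1$ in the statement is plainly a typo for $u'(0)\neq 0$).
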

\begin{remark}  \label{rmk:upvanish}
To treat cases where $u'$ vanishes on the boundary we would use a version of Proposition \ref{LocSpec_Halfspace} which yields an inequality analogous to Proposition \ref{LocSpec}.
\end{remark}

Equipped with Proposition \ref{LocSpec_Halfspace}, we may now turn to controlling the boundary contributions to the error terms studied in Section \ref{sec:err}. 
Indeed, via the partition of unity defined by $\phi_j$ and $\phi_b$ as chosen above, the interior terms can be treated as in the case of $\Torus^2$. 
In what follows define 
\begin{align*}
\ff_{b} = \ff \sqrt{\phi_b}. 
\end{align*}

\begin{lemma}\label{lem:err2_bdy}
Assume \eqref{eq:constr4}. Then for $\nu \abs{k}^{-1}$ sufficiently small, there holds
\begin{align*}
\nu|k|^2 \| \beta u'''\ff_b \|^2 \leq \frac{1}{14C_0}|k|^2\|\sqrt\beta u'\ff_b\|^2.
\end{align*} 
\end{lemma}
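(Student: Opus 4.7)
The key observation is that, by the construction of $\delta$, the support of $\phi_b$ contains no critical point of $u$, so on that set $|u'|\geq c$ and $|u'''|\leq C$ for constants depending only on $u$. Consequently, unlike in Lemma \ref{lem:err2}, no Taylor expansion or spectral-gap inequality is needed here, and the estimate should reduce to a purely pointwise comparison between the integrands, exploiting the fact that $\beta$ vanishes at $y=0,1$ together with the smallness of $\nu|k|^{-1}$. Note also that the right-hand side of the lemma only involves $|k|^2\|\sqrt\beta u'\ff_b\|^2$ and not $\nu\|\de_y\ff_b\|^2$, which is possible precisely because $|u'|$ is bounded below on $\spt(\phi_b)$.

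My plan is as follows. First, since $\widetilde\phi_{b,0}$ and $\widetilde\phi_{b,1}$ have disjoint supports and the situation at $y=1$ is the mirror of that at $y=0$, I reduce the estimate to the contribution from $\widetilde \ff_b := \ff\sqrt{\widetilde\phi_{b,0}}$, supported in $[0,2\delta]$; on this set $\beta(y) = \eps_{\beta,1}\beta_1 y^2$. I would then establish the pointwise inequality
\[
\beta(y)^2 |u'''(y)|^2 \;\lesssim\; \eps_{\beta,1}^2\beta_1^2\, y^4 \;\lesssim\; (2\delta)^2\eps_{\beta,1}\beta_1 \cdot \bigl(\eps_{\beta,1}\beta_1 y^2\bigr) \;\lesssim\; \eps_{\beta,1}\beta_1 \cdot \beta(y)|u'(y)|^2,
\]
where the three inequalities use in turn $|u'''|\leq C$, $y\leq 2\delta$, and $|u'|\geq c$. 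Integrating, multiplying by $\nu|k|^2$, and using $\beta_1=|k|^{-1}$ from \eqref{eq:param}, I obtain
\[
\nu|k|^2\|\beta u'''\widetilde\ff_b\|^2 \;\lesssim\; \eps_{\beta,1}\,\nu|k|^{-1}\cdot |k|^2\|\sqrt\beta u'\widetilde\ff_b\|^2,
\]
and the lemma follows upon taking $\kappa_0$ small enough (depending on the already-fixed $\eps_{\beta,1}$ and on $u$). The piece near $y=1$ is handled identically by the reflection $y\mapsto 1-y$.

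The main obstacle, such as it is, is essentially bookkeeping rather than analysis; the real content of Lemma \ref{lem:err2} — the Taylor/Hölder interplay at interior critical points and the use of the localized spectral gap — disappears here because $u'$ does not vanish. The one place requiring care is extracting the small prefactor $\eps_{\beta,1}\nu|k|^{-1}$ purely from the pointwise structure of $\beta$ (two factors of $y$: one pairing with $|u'|^2$ to reconstruct $\beta|u'|^2$, the other producing the factor $\eps_{\beta,1}\beta_1 = \eps_{\beta,1}|k|^{-1}$), since we are not permitted to spend any of the kinetic term $\nu\|\de_y\widetilde\ff_b\|^2$ on the right-hand side.
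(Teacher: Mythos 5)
Your overall approach is the same as the paper's: a purely pointwise comparison using $|u'|\geq c$ and $|u'''|\leq C$ on $\spt\phi_b$, the quadratic vanishing of the boundary weight $\eps_{\beta,1}\beta_1 y^2$, and the resulting small prefactor $\eps_{\beta,1}\nu|k|^{-1}$ which is absorbed by taking $\kappa_0$ small. The reduction to the bottom boundary and the identification $\beta_1=|k|^{-1}$ are both exactly what the paper does, and the final displayed inequality matches the paper's conclusion.

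One bookkeeping wrinkle is worth flagging. You write ``on this set $\beta(y)=\eps_{\beta,1}\beta_1 y^2$'', but this is true only on $[0,\delta]$, where $\widetilde\phi_{b,0}\equiv 1$ and $\phi_0\equiv 0$. On the transition region $[\delta,2\delta]$ one has $\beta(y)=\eps_{\beta,1}\beta_1 y^2\,\widetilde\phi_{b,0}(y)+\eps_{\beta,0}\beta_0\,\phi_0(y)$, and as $y\to 2\delta^-$ the function $\widetilde\phi_{b,0}$ vanishes faster than any polynomial, so $\beta(y)\to\eps_{\beta,0}\beta_0$, which for $\nu|k|^{-1}$ small is much smaller than $\eps_{\beta,1}\beta_1 y^2\sim\eps_{\beta,1}\beta_1\delta^2$ there. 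Consequently the last inequality in your displayed chain, which implicitly needs $\eps_{\beta,1}\beta_1 y^2\lesssim\beta(y)$, does not hold with a $\nu,k$-independent constant on all of $\spt\widetilde\phi_{b,0}$. The conclusion is unaffected, however: the cleanest route (which also repairs the chain) is to establish the pointwise bound $\nu\,\beta(y)\leq c^2/(14C_0\|u'''\|_{L^\infty}^2)$ on $\spt\phi_b$, which requires only the one-sided bound $\beta(y)\lesssim\eps_{\beta,1}\beta_1=\eps_{\beta,1}|k|^{-1}$ there (valid since $y\leq 2\delta\leq 1/4$ and $\eps_{\beta,0}\beta_0\ll\eps_{\beta,1}\beta_1$ for small $\nu|k|^{-1}$), and hence $\nu\beta(y)\lesssim\eps_{\beta,1}\nu|k|^{-1}$.
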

\begin{proof} 
First note that on the support of $\ff_b$, there is some $c > 0$ such that $\abs{u'} \geq c$.
Second, note that by symmetry, the top and bottom boundaries are essentially the same, so we need only focus on the bottom boundary at $y = 0$. 
Then, the result follows immediately from 
\begin{align*}
\eps_{\beta,1}^2 \nu \abs{k}^2 \beta_1^2 y^4 =  \eps_{\beta,1}^2 \left(\nu \abs{k}^{-1}\right) \abs{k}^2 \beta_1 y^4  \leq \left(\eps_{\beta,1} \nu \abs{k}^{-1}\right) \left(\eps_{\beta,1} \beta_1 \abs{k}^2 y^2\right),  
\end{align*}
and choosing $\nu \abs{k}^{-1}$ small relative to $c$. 
\end{proof} 

Turn next to the analogue of Lemma \ref{lem:err3}. 

\begin{lemma}\label{lem:err3_bdy}
Assume \eqref{eq:constr5} and denote $\ff_{b} = \ff \sqrt{\phi_b}$.
Then, for $\nu \abs{k}^{-1}$ sufficiently small, there holds 
\begin{align*} 
\nu|k|^2\|\sqrt\gamma u''\ff_b\|^2 \leq \frac{1}{14C_0} |k|^2\|\sqrt\beta u'\ff_b\|^2. 
\end{align*} 
\end{lemma}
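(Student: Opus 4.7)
The plan is to mirror the structure of the interior case (Lemma \ref{lem:err3}) but, since the statement omits any $\nu\|\de_y\ff_b\|^2$ term on the right-hand side, I will use a direct pointwise comparison of the weights rather than invoke the boundary spectral gap \eqref{eq:LocSpecbdy}. The key structural point is that the $y^2$ factor built into $\beta$ and $\gamma$ near the boundary replaces the role that $\phi_j$ plays near an interior critical point of order $j=1$ -- this is precisely the reason behind $n_c=\max\{n_0,1\}$ in Theorem \ref{thm:channeldecay}.

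First I would reduce to a pointwise statement on the weights. On $\spt\phi_b$, which is a union of two neighborhoods of width $2\delta$ around $y=0$ and $y=1$, the hypothesis $u'(0),u'(1)\neq 0$ together with the small choice of $\delta$ made in Section \ref{sec:chan} furnishes constants $c,M>0$ (depending only on $u$) such that $|u'|\geq c$ and $|u''|\leq M$ throughout $\spt\phi_b$. Hence
\[
\nu|k|^2\|\sqrt\gamma\, u''\ff_b\|^2\leq M^2\nu|k|^2\int \gamma(y)|\ff_b(y)|^2\d y,\qquad |k|^2\|\sqrt\beta\, u'\ff_b\|^2\geq c^2|k|^2\int \beta(y)|\ff_b(y)|^2\d y,
\]
so the lemma reduces to proving the pointwise inequality $\nu\gamma(y)\leq (c^2/14 C_0 M^2)\beta(y)$ for every $y\in\spt\phi_b$.

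To verify this pointwise bound I focus on $\spt\widetilde\phi_{b,0}$ by symmetry. By the choice of $\delta$, only $\widetilde\phi_{b,0}$ and $\phi_0$ from among all the partition-of-unity elements are allowed to be nonzero on $\spt\widetilde\phi_{b,0}$, so
\[
\gamma(y)=\eps_{\gamma,1}\gamma_1 y^2\widetilde\phi_{b,0}(y)+\eps_{\gamma,0}\gamma_0\phi_0(y),\qquad
\beta(y)=\eps_{\beta,1}\beta_1 y^2\widetilde\phi_{b,0}(y)+\eps_{\beta,0}\beta_0\phi_0(y).
\]
By linearity, it suffices to match the $j=0$ and $j=1$ contributions individually, i.e.\ to verify $\nu\eps_{\gamma,j}\gamma_j\leq (c^2/28 C_0 M^2)\eps_{\beta,j}\beta_j$. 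From \eqref{eq:param},
\[
\nu\frac{\gamma_0}{\beta_0}=\left(\frac{\nu}{|k|}\right)^{2/3},\qquad \nu\frac{\gamma_1}{\beta_1}=\left(\frac{\nu}{|k|}\right)^{1/2},
\]
both of which can be made arbitrarily small by shrinking $\nu|k|^{-1}$; note that the ratios $\eps_{\gamma,j}/\eps_{\beta,j}$ have been fixed earlier in Section \ref{sub:mainproof} and the constraint $\kappa_0$ is chosen afterwards.

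The main (mild) obstacle I anticipate is the bookkeeping in the transition region where $\widetilde\phi_{b,0}$ and $\phi_0$ simultaneously have positive values: one could worry about cross-terms between the $j=0$ and $j=1$ pieces in the product $\gamma\beta^{-1}$. The key observation is that $\beta$ and $\gamma$ are \emph{linear} in the partition of unity, so a per-$j$ pointwise comparison is exactly what is needed and no cross-term estimate arises. In contrast to the interior case of Lemma \ref{lem:err3} for $j=1$, no invocation of a spectral-gap inequality is required here, precisely because the $y^2$ weight is already hard-wired into $\beta$ and $\gamma$ by construction.
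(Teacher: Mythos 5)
Your proof is correct and takes essentially the same approach as the paper: reduce to a pointwise comparison $\nu\gamma(y)\lesssim\beta(y)$ on $\spt\phi_b$ using that $|u'|\geq c>0$ and $|u''|$ is bounded there, then invoke $\nu\gamma_j/\beta_j=(\nu/|k|)^{\text{positive power}}$. The paper's proof is simply a compressed version of yours (displaying only the $j=1$ boundary weight comparison and noting the constants); your more explicit treatment of the transition region where $\widetilde\phi_{b,0}$ and $\phi_0$ overlap is the right bookkeeping and fills in what the paper leaves implicit.
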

\begin{proof} 
As in Lemma \ref{lem:err2_bdy} above, on the support of $\ff_b$ there is some $c > 0$ such that $\abs{u'} \geq c$ 
and by symmetry we need only consider the bottom boundary as the top follows similarly. 
Then, the desired inequality follows from 
\begin{align*}
\eps_{\gamma,1} \nu \abs{k}^2 \gamma_1 y = \eps_{\gamma,1}\nu^{1/2} \abs{k}^{1/2}y = \left(\frac{\eps_{\gamma,1}\nu^{1/2}}{\eps_{\beta,1}\abs{k}^{1/2}} \right) \left(\eps_{\beta,1}\abs{k}^2 \beta_1 y\right),
\end{align*}
and choosing $\nu \abs{k}^{-1}$ small relative to $c$, $C_0$, and $\eps_{\gamma,i} \eps_{\beta,1}^{-1}$. 
\end{proof} 

Turn next to the analogue of Lemma \ref{lem:err1}. 

\begin{lemma}\label{lem:err1_bdy}
For $\alpha,\beta,\gamma$ chosen as described above, there holds for $\nu \abs{k}^{-1}$ sufficiently small,  
\begin{align}
\nu\norm{\frac{\alpha'}{\sqrt\alpha}\de_y\ff_b}^2 \leq \frac{1}{14C_0}\left(\nu\norm{\de_y \ff_b}^2 + \nu \abs{k}^2\norm{\sqrt{\gamma} u' \partial_y \ff}^2\right).\label{ineq:err1bdy}
\end{align}
\end{lemma}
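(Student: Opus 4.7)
The plan is to adapt Lemma \ref{lem:err1} with extra care for the boundary contribution to $\alpha$. By symmetry, focus on the region near $y=0$, i.e.\ the support of $\widetilde{\phi}_{b,0}$. Since $u'(0)\neq 0$ and $\delta$ is chosen small, on this support $|u'(y)|\geq c>0$ for some $c$ depending only on $u$. I decompose
\begin{align*}
\alpha(y)=\alpha_B(y)+\alpha_I(y), \qquad \alpha_B(y):=\eps_{\alpha,1}\alpha_1\,y^2\,\widetilde{\phi}_{b,0}(y), \qquad \alpha_I(y):=\sum_{j=0}^{n_0}\eps_{\alpha,j}\alpha_j\phi_j(y),
\end{align*}
so that $\alpha_I'$ is supported only in the transition zones of the partition of unity, all of which are away from $y=0$. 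On those transition zones the proof of Lemma \ref{lem:err1} applies \emph{verbatim} and produces a pointwise bound $\nu|\alpha_I'(y)|^2\leq \tfrac{c^2}{28C_0}\,|k|^2\alpha(y)\gamma(y)$, whose integral contributes at most $\tfrac{1}{14C_0}\nu|k|^2\|\sqrt{\gamma}\,u'\,\partial_y\ff\|^2$ on the right-hand side (possibly after shrinking $\nu/|k|$ to absorb the harmless $\sqrt{\phi_b}$-commutator that relates $\partial_y\ff_b$ to $\partial_y\ff$).

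For the new boundary piece, I compute
\begin{align*}
\alpha_B'(y)=2\eps_{\alpha,1}\alpha_1\,y\,\widetilde{\phi}_{b,0}(y) + \eps_{\alpha,1}\alpha_1\,y^2\,\widetilde{\phi}_{b,0}'(y),
\end{align*}
and combine $\alpha(y)\geq \alpha_B(y)=\eps_{\alpha,1}\alpha_1\,y^2\widetilde{\phi}_{b,0}$ with the property $|\widetilde{\phi}_{b,0}'|^2\lesssim \widetilde{\phi}_{b,0}^{2-\varsigma}$ from Lemma \ref{lem:part2} to derive the pointwise bound $|\alpha_B'(y)|^2/\alpha(y)\leq C\eps_{\alpha,1}\alpha_1$, with $C$ depending only on $u$. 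Since $\alpha_1=(\nu/|k|)^{1/2}$, choosing $\nu/|k|$ sufficiently small makes this bound no larger than $1/(14C_0)$, so that the $L^2$-integral of $\nu|\alpha_B'|^2/\alpha$ against $|\partial_y\ff_b|^2$ is absorbed into $\tfrac{1}{14C_0}\nu\|\partial_y\ff_b\|^2$, which is the first term on the right-hand side. Combining the two contributions yields \eqref{ineq:err1bdy}.

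The main obstacle is verifying this pointwise ratio bound uniformly in $y$ near the boundary, where $\alpha_B$ vanishes quadratically. The useful cancellation is that the dominant term $2y\widetilde{\phi}_{b,0}$ in $\alpha_B'$ is exactly tuned to the quadratic vanishing of $\alpha_B$, so $|2y|^2/y^2$ is bounded, while the subordinate term is controlled by $|\widetilde{\phi}_{b,0}'|^2\lesssim \widetilde{\phi}_{b,0}^{2-\varsigma}$. A minor subtlety arises in the overlap zone $y\in(\delta,2\delta)$ where both $\alpha_B$ and $\alpha_I$ contribute: there I would split according to whether $\widetilde{\phi}_{b,0}\geq 1/2$ or $\widetilde{\phi}_{b,0}<1/2$ and lower-bound $\alpha$ by the dominant term (namely $\alpha_B$ or $\eps_{\alpha,0}\alpha_0(1-\widetilde{\phi}_{b,0})$ respectively), then verify in each sub-case that the ratio is controlled by a small power of $\nu/|k|$ using the hierarchy \eqref{eq:abgmono}.
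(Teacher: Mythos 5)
Your proposal is correct and takes essentially the same approach as the paper: split into the region near the boundary where the cutoffs are constant (there the key computation is $|\alpha'|^2/\alpha\sim\eps_{\alpha,1}\alpha_1=\eps_{\alpha,1}(\nu/|k|)^{1/2}$, small, so the term is absorbed into $\frac{\nu}{14C_0}\|\partial_y\ff_b\|^2$) and the transition zone where $\phi_b'\neq 0$, which is handled as in Lemma~\ref{lem:err1}. One small slip: your displayed pointwise bound for the interior piece should read $|\alpha_I'(y)|^2\leq \frac{c^2}{28C_0}|k|^2\alpha(y)\gamma(y)$ \emph{without} the leading $\nu$ (the $\nu$ is carried by the $L^2$-integral and lands on the right-hand side exactly once); with that correction the estimate you claim follows.
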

\begin{proof} 
First, the region where $\phi_b'(y) \neq 0$ can be treated as in Lemma \ref{lem:err1}; indeed, in this region the additional weight $y$ can be essentially ignored. 
Hence, it suffices to consider the region where $\phi_b'(y) = 0$ (and of course $\phi_b(y) \neq 0$).  
In this region, which is closer to the boundary, the result follows from the observation that 
\begin{align*}
\frac{\alpha'(y)}{\sqrt{\alpha(y)}} = 2\eps_{\alpha,1}^{1/2} \frac{\nu^{1/2}}{\abs{k}^{1/2}},   
\end{align*}
which shows that by choosing $\nu \abs{k}^{-1}$ small, we can absorb the contribution by the first term on the left-hand side of \eqref{ineq:err1bdy}.  
\end{proof} 

Next, the analogue of Lemma \ref{lem:err3bis}. 

\begin{lemma}\label{lem:err3_bdybis}
For $\alpha,\beta,\gamma$ chosen as described above, there holds for $\nu \abs{k}^{-1}$ sufficiently small,  
\begin{align*}
\nu |k|^2\|\beta'u''\ff_b\|^2 \leq \frac{1}{14C_0} |k|^2\|\sqrt{\beta} u'\ff_b\|^2. 
\end{align*}
\end{lemma}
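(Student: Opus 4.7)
The plan is to mimic Lemma \ref{lem:err3bis}, adapting its pointwise estimate to the boundary-modified weight $\beta$. Because $u'(0) \neq 0$ and $u'(1) \neq 0$, on the support of $\ff_b = \ff\sqrt{\phi_b}$ we have $|u'(y)| \geq c > 0$ for some $c$ depending only on $u$. Thus it suffices to establish the pointwise estimate
\begin{equation*}
\nu |\beta'(y)|^2 \|u''\|_{L^\infty}^2 \leq \frac{c^2}{14C_0}\, \beta(y), \qquad y \in \spt(\phi_b),
\end{equation*}
which, combined with $|u''|^2 \leq \|u''\|_{L^\infty}^2$ and $|u'|^2 \geq c^2$, yields the lemma after multiplying by $|k|^2$. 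By symmetry we may focus on the region near $y=0$, where $\beta(y) = \eps_{\beta,1}\beta_1 y^2\,\widetilde\phi_{b,0}(y)$ (the interior bumps $\phi_j$ vanish near $y=0$ by the modified choice of $\delta$).

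I would split $\spt(\widetilde\phi_{b,0})$ into a transition zone $\{\widetilde\phi_{b,0}' \neq 0\}$ and a pure boundary zone $\{\widetilde\phi_{b,0} \equiv 1\}$. In the transition zone $y$ is bounded above and below by constants depending only on $\delta$, so the factor $y^2$ is a harmless $O(1)$ multiplier, and the two contributions $2\eps_{\beta,1}\beta_1 y\,\widetilde\phi_{b,0}$ and $\eps_{\beta,1}\beta_1 y^2\,\widetilde\phi_{b,0}'$ to $\beta'$ are both controlled exactly as in Lemma \ref{lem:err3bis}, using the partition-of-unity bound $|\widetilde\phi_{b,0}'|^2 \lesssim_\varsigma \widetilde\phi_{b,0}^{2-\varsigma}$ from \eqref{eq:part} and choosing $\nu|k|^{-1}$ small. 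In the pure boundary zone we compute directly $\beta'(y) = 2\eps_{\beta,1}\beta_1 y$, so
\begin{equation*}
\frac{\nu|\beta'(y)|^2}{\beta(y)} \,=\, 4\nu\,\eps_{\beta,1}\beta_1 \,=\, 4\eps_{\beta,1}\bigl(\nu|k|^{-1}\bigr),
\end{equation*}
using the crucial identity $\beta_1 = |k|^{-1}$ from \eqref{eq:param}. The pointwise estimate then holds provided $\nu|k|^{-1}$ is small enough compared to $c$, $\|u''\|_{L^\infty}$, $C_0$, and $\eps_{\beta,1}$.

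The only subtlety is the transition zone, but the observation that $y$ is bounded away from $0$ there reduces it to the interior calculation of Lemma \ref{lem:err3bis}; the algebraic identity $\nu\beta_1 = \nu|k|^{-1}$ then takes care of the pure boundary zone, which is where the new weight $y^2$ actually matters. No new ideas beyond those already used in Lemma \ref{lem:err1_bdy} are required.
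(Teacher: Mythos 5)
Your proposal is correct and follows essentially the same route as the paper: the reduction to a pointwise bound on $\nu|\beta'|^2/\beta$ using $|u'|\geq c$ on $\spt(\phi_b)$, the split into the transition region (handled as in Lemma~\ref{lem:err3bis}, with $y\asymp\delta$ making $y^2$ harmless) and the pure boundary region ($\widetilde\phi_{b,0}\equiv 1$), and the key computation $\nu|\beta'|^2/\beta = 4\eps_{\beta,1}\nu\beta_1 = 4\eps_{\beta,1}\nu|k|^{-1}$ near the boundary. Your write-up is if anything slightly more careful than the paper's, which records the same identity in a somewhat garbled form (writing $\beta_1$ where $\beta(y)$ is meant in the final equality).
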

\begin{proof} 
Analogously to Lemma \ref{lem:err1_bdy} above, the region where $\phi'_b(y) \neq 0$ can be treated as in Lemma \ref{lem:err3bis}. 
In the region near the boundary where $\phi'_b(y) = 0$ (and $\phi_b \neq 0$), first note that $\abs{u'} \geq c$ for some $c > 0$. 
Hence, the desired inequality follows from 
\begin{align*}
\nu \abs{k}^2 (\beta'(y))^2 = 4 \eps^2_{\beta,1} \nu \abs{k}^2 \beta_1^2 y^2 = 4 \eps_{\beta,1} \left(\nu \abs{k}^{-1}\right)\abs{k}^2 \beta_1,   
\end{align*}
and choosing $\nu \abs{k}^{-1}$ small relative to $C_0$ and $c$.  
\end{proof} 

Next, we have the analogue of Lemma \ref{lem:err4}. This term requires a more sophisticated treatment than has been used on the previously described boundary error terms, depending now on the spectral gap Proposition \ref{LocSpec_Halfspace}. 

\begin{lemma}\label{lem:err4_bdy}
For $\alpha,\beta,\gamma$ chosen as described above, there holds for $\nu \abs{k}^{-1}$ sufficiently small,  
\begin{align}
\nu |k|^2 \| \beta''u'\ff_b \|^2 \leq \frac{1}{14C_0}\left(\nu\norm{\partial_y \ff_b}^2 + |k|^2\|\sqrt{\beta} u'\ff_b\|^2\right). \label{ineq:betapperr4}
\end{align}
\end{lemma}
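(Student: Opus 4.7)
The plan is to split the support of $\ff_b$ into two regions: the \emph{transition region}, where $\phi_b' \neq 0$, and the \emph{boundary region}, where $\phi_b' = 0$ but $\phi_b \neq 0$. By symmetry we only consider the bottom boundary at $y = 0$, so that $\widetilde{\phi}_{b,0}$ is active. In the transition region, $\beta$ is a smooth function smoothly transitioning between $\eps_{\beta,1}\beta_1 y^2$ and the interior weight $\sum_j \eps_{\beta,j}\beta_j \phi_j$, and the contribution to $\|\beta''u'\ff_b\|^2$ can be absorbed by arguments identical to those in Lemma \ref{lem:err4} (which in turn rests on the same scheme as Lemma \ref{lem:err3bis}), using that $|u'| \geq c > 0$ on the support of $\ff_b$ and the pointwise bound $|\phi''| \lesssim_{\varsigma}|\phi|^{1-\varsigma}$.

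The genuinely new contribution comes from the boundary region, where $\beta(y) = \eps_{\beta,1}\beta_1 y^2$ exactly, so that $\beta''(y) \equiv 2\eps_{\beta,1}\beta_1$ is a nonzero constant; it is this fact that distinguishes the present lemma from Lemmata \ref{lem:err2_bdy}--\ref{lem:err3_bdybis}, where enough powers of $y$ remained in $\beta$ and its derivatives to extract smallness pointwise. Using $\beta_1 = |k|^{-1}$ and the boundedness of $u'$, the LHS in this region is controlled by
\begin{equation*}
\nu |k|^2 \|\beta'' u' \ff_b\|^2 \lesssim \nu |k|^2 \eps_{\beta,1}^2 \beta_1^2 \|\ff_b\|^2 \lesssim \nu \eps_{\beta,1}^2 \|\ff_b\|^2,
\end{equation*}
so it remains to bound $\|\ff_b\|^2$ by the RHS quantities.

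This is precisely what Proposition \ref{LocSpec_Halfspace} is designed for: choosing $\sigma = \nu / (|k|\eps_{\beta,1})$ in \eqref{eq:LocSpecbdy} and multiplying through by $|k|\eps_{\beta,1}$ yields
\begin{equation*}
\nu^{1/2}|k|^{1/2}\eps_{\beta,1}^{1/2}\|\ff_b\|^2 \lesssim \nu\|\partial_y \ff_b\|^2 + |k|\eps_{\beta,1}\|y u' \ff_b\|^2 = \nu\|\partial_y \ff_b\|^2 + |k|^2\|\sqrt{\beta}u'\ff_b\|^2,
\end{equation*}
where the last identity uses $|k|^2 \eps_{\beta,1}\beta_1 = |k|\eps_{\beta,1}$. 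Combining, the boundary-region contribution is controlled by the RHS of \eqref{ineq:betapperr4} provided $\nu \eps_{\beta,1}^2 \leq c \nu^{1/2}|k|^{1/2}\eps_{\beta,1}^{1/2}$, i.e., $(\nu/|k|)^{1/2}\eps_{\beta,1}^{3/2} \leq c$, which holds by fixing $\eps_{\beta,1}$ first and then taking $\nu|k|^{-1}$ sufficiently small.

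The main obstacle, as indicated above, is the non-vanishing of $\beta''$ at the boundary, which prevents the purely pointwise strategy used in the previous boundary error lemmas. The substitute is the spectral-gap inequality of Proposition \ref{LocSpec_Halfspace}: it turns the excess factor of $\eps_{\beta,1}\beta_1$ into a quantity controlled by $\|\partial_y \ff_b\|^2$ and $\|y u' \ff_b\|^2$, i.e., exactly the building blocks of the RHS, at the price of a power of $\nu/|k|$ which we can afford.
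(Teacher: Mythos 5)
Your proof is correct and follows essentially the same strategy as the paper's: treat the transition region ($\phi_b' \neq 0$) by the same argument as Lemma \ref{lem:err4}, and in the boundary region ($\phi_b' = 0$) use Proposition \ref{LocSpec_Halfspace} to absorb the non-vanishing constant $\beta'' = 2\eps_{\beta,1}\beta_1$. The only cosmetic difference is your choice $\sigma = \nu/(|k|\eps_{\beta,1})$ versus the paper's written $\sigma = \nu\beta_1^{-1}|k|^{-2}$; your choice matches the standard one from \eqref{eq:spectrga2}, and both lead to the same prefactor $\eps_{\beta,1}^{3/2}(\nu/|k|)^{1/2}$, so this appears to be at most a typo in the paper.
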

\begin{proof} 
In the region near the boundary where $\phi'_b(y) = 0$, we have 
\begin{align*}
\nu \abs{k}^2 (\beta''(y))^2 = 4 \eps^2_{\beta,1} \nu \abs{k}^2 \beta_1^2 = 4 \eps^2_{\beta,1} \nu, 
\end{align*}
which implies 
\begin{align*}
\nu |k|^2 \| \beta''u'\ff_b \mathbf{1}_{\phi'_b = 0} \|^2 \lesssim \eps^2_{\beta,1} \nu \norm{\ff_b}^2. 
\end{align*}
Hence, Proposition \ref{LocSpec_Halfspace} with $\sigma = \nu \beta_1^{-1}\abs{k}^{-2}$, there holds 
\begin{align*}
4 \eps^2_{\beta,1} \nu \norm{\ff_b}^2 & \lesssim \eps^{3/2}_{\beta,1} \nu^{1/2} \beta_1^{-1/2} \abs{k}^{-1} \left(\nu\norm{\partial_y \ff}^2 + \abs{k}^2\norm{\sqrt{\beta}u'\ff}^2\right) \\
 & = \eps^{3/2}_{\beta,1} \nu^{1/2} \abs{k}^{-1/2} \left(\nu\norm{\partial_y \ff}^2 + \abs{k}^2\norm{\sqrt{\beta}u'\ff}^2\right), 
\end{align*}
which is consistent with \eqref{ineq:betapperr4} for $\nu \abs{k}^{-1}$ sufficiently small.
The region where $\phi'_b(y) \neq 0$ can be treated as in Lemma \ref{lem:err4} and is hence omitted for brevity. 
\end{proof} 

Next, the analogue of Lemma \ref{lem:err6}. 

\begin{lemma}\label{lem:err6_bdy}
For $\alpha,\beta,\gamma$ chosen as described above, there holds for $\nu \abs{k}^{-1}$ and
$\eps_{\gamma,1} \eps_{\beta,1}^{-1/2}$ sufficiently small,    
\begin{align*}
\nu\abs{k}^2 \norm{\frac{\gamma'}{\sqrt\gamma(y)}u'\ff_b}^2 \leq \frac{1}{14C_0}\left[\nu \norm{\partial_y \ff_b}^2 +\abs{k}^2\norm{\sqrt{\beta} u' \ff_b}^2\right].
\end{align*}
\end{lemma}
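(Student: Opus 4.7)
The plan is to mimic the strategy used for the interior Lemma \ref{lem:err6} and for the analogous boundary lemmas \ref{lem:err1_bdy}, \ref{lem:err3_bdybis}, and especially \ref{lem:err4_bdy}. The key split is between the region where $\phi_b'(y) \neq 0$ (so one is away from the boundary, in the transition zone of the partition of unity) and the region where $\phi_b'(y) = 0$ but $\phi_b(y) \neq 0$ (so one is truly at the boundary and the weights take the explicit form $\eps_{\gamma,1}\gamma_1 y^2$ or $\eps_{\gamma,1}\gamma_1(1-y^2)$). The former region may be handled verbatim as in Lemma \ref{lem:err6}, since in the transition zone the extra $y$-factor is bounded above and below and the pointwise argument carries over; the genuinely new ingredient is only needed near $y=0$ or $y=1$, and by symmetry it is enough to treat $y = 0$.

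In the boundary region, $\gamma(y) = \eps_{\gamma,1}\gamma_1 y^2$, so that $\gamma'(y)/\sqrt{\gamma(y)} = 2(\eps_{\gamma,1}\gamma_1)^{1/2}$ is a constant. Since $|u'| \leq \|u'\|_{L^\infty}$, plugging in the values of $\gamma_1 = \nu^{-1/2}|k|^{-3/2}$ from \eqref{eq:param} gives
\begin{align*}
\nu |k|^2 \norm{\tfrac{\gamma'}{\sqrt{\gamma}} u' \ff_b \mathbf{1}_{\phi_b'=0}}^2 \lesssim \eps_{\gamma,1}\, \nu^{1/2} |k|^{1/2}\, \norm{\ff_b}^2.
\end{align*}
Unlike the other boundary error lemmas (e.g. Lemmas \ref{lem:err2_bdy} and \ref{lem:err3_bdybis}) where one could absorb the resulting factor purely by taking $\nu |k|^{-1}$ small, here the scaling is critical in $\nu$ and $|k|$, so the spectral gap Proposition \ref{LocSpec_Halfspace} must be invoked --- exactly as was done in Lemma \ref{lem:err4_bdy}.

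I apply Proposition \ref{LocSpec_Halfspace} with $\sigma = \nu (|k|^2 \eps_{\beta,1}\beta_1)^{-1}$. Using $\sqrt{\beta(y)} = (\eps_{\beta,1}\beta_1)^{1/2} y$ near the boundary, the inequality reads
\begin{align*}
|k|^2 \eps_{\beta,1}\beta_1\, \sigma^{1/2} \norm{\ff_b}^2 \lesssim \nu \norm{\de_y \ff_b}^2 + |k|^2 \norm{\sqrt{\beta} u' \ff_b}^2.
\end{align*}
Since $\beta_1 = |k|^{-1}$ by \eqref{eq:param}, one computes
\begin{align*}
|k|^2 \eps_{\beta,1}\beta_1\, \sigma^{1/2} = \eps_{\beta,1}^{1/2}\, \nu^{1/2} |k|^{1/2},
\end{align*}
so combining the two displays yields
\begin{align*}
\nu |k|^2 \norm{\tfrac{\gamma'}{\sqrt{\gamma}} u' \ff_b \mathbf{1}_{\phi_b'=0}}^2 \lesssim \tfrac{\eps_{\gamma,1}}{\eps_{\beta,1}^{1/2}}\left[\nu \norm{\de_y \ff_b}^2 + |k|^2 \norm{\sqrt{\beta} u' \ff_b}^2\right],
\end{align*}
which is exactly the smallness ratio appearing in the hypothesis. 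Choosing $\eps_{\gamma,1}\eps_{\beta,1}^{-1/2}$ sufficiently small (relative to the constant $C_0$ and the implicit constant above) closes this piece, and combining with the transition-region estimate handled as in Lemma \ref{lem:err6} completes the proof.

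The main obstacle, and the only novelty over the previous boundary lemmas, is the identification of the correct choice $\sigma = \nu(|k|^2\eps_{\beta,1}\beta_1)^{-1}$ in Proposition \ref{LocSpec_Halfspace}: this is the unique scaling that converts $\sigma \norm{\de_y \ff_b}^2 + \norm{y u' \ff_b}^2$ into the target functional $\nu \norm{\de_y \ff_b}^2 + |k|^2 \norm{\sqrt{\beta} u' \ff_b}^2$, and one must verify that the gain $\sigma^{1/2}$ matches the loss $\eps_{\gamma,1}\nu^{1/2}|k|^{1/2}$ precisely up to the factor $\eps_{\gamma,1}\eps_{\beta,1}^{-1/2}$.
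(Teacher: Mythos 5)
Your proposal is correct and follows essentially the same argument as the paper: you split into the boundary core $\{\phi_b' = 0\}$ (where you invoke Proposition~\ref{LocSpec_Halfspace} with $\sigma = \nu(|k|^2\eps_{\beta,1}\beta_1)^{-1}$, which, since $\beta_1 = |k|^{-1}$, is exactly the paper's choice $\sigma = \nu/(|k|\eps_{\beta,1})$) and the transition region (handled as in Lemma~\ref{lem:err6}), and the resulting constant $\eps_{\gamma,1}\eps_{\beta,1}^{-1/2}$ matches the paper's.
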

\begin{proof} 
As above, the region where $\phi_b \neq 0$ can be treated as in Lemma \ref{lem:err6}, and is hence omitted for brevity. 
Consider next the region where $\phi'_b = 0$. 
Notice that 
\begin{align*}
\nu \abs{k}^2 \frac{(\gamma'(y))^2}{\gamma(y)} = 4\eps_{\gamma,1}\nu \abs{k}^2 \gamma_1, 
\end{align*}
and therefore we have 
\begin{align*}
\nu\abs{k}^2 \norm{\frac{\gamma'}{\sqrt{\gamma(y)}}u'\ff_b \mathbf{1}_{\phi'_b = 0}}^2 \lesssim \eps_{\gamma,1}\nu \abs{k}^2 \gamma_1\norm{\ff_b}^2.    
\end{align*}
Therefore, by Proposition \ref{LocSpec_Halfspace} with $\sigma = \frac{\nu}{\abs{k} \eps_{\beta,1}}$ we have 
\begin{align*}
\eps_{\gamma,1}\nu \abs{k}^2 \gamma_1\norm{\ff_b}^2 \lesssim \frac{\eps_{\gamma,1}}{\eps_{\beta,1}^{1/2}} \left[\nu \norm{\partial_y \ff_b}^2 + \abs{k}^2 \norm{\sqrt{\beta} u' \ff_b}^2\right].  
\end{align*}
Hence, the desired inequality follows by the stated hypotheses. 
\end{proof} 

Finally, the analogue of Lemma \ref{lem:err7}. 

\begin{lemma}\label{lem:err7_bdy}
For $\alpha,\beta,\gamma$ chosen as described above, there holds, for $\nu \abs{k}^{-1}$ sufficiently small and $\eps_{\alpha,1}^2 \leq \frac{1}{196}\eps_{\beta,1}$,   
\begin{align}
\abs{\jap{\left(\alpha - \frac{\eps_{\alpha,n_c}}{\eps_{\beta,n_c}}\widetilde\lambda_{\nu,k}\beta\right) \de_y \ff,iku'\ff}} \leq \frac{1}{14C_0}\left(\nu\norm{\de_y \ff}^2 + \abs{k}^2\norm{\sqrt{\beta} u' \ff}^2\right). \label{ineq:err7_bdy}
\end{align}
\end{lemma}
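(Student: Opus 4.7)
The strategy is to imitate the proof of Lemma \ref{lem:err7}, refining the partition of unity to include the new boundary pieces $\widetilde\phi_{b,0},\widetilde\phi_{b,1}$. First I would apply an $\eps$-Young inequality with the weight $\nu/(28C_0)$ to split
\begin{equation*}
\abs{\jap{\left(\alpha - \tfrac{\eps_{\alpha,n_c}}{\eps_{\beta,n_c}}\widetilde\lambda_{\nu,k}\beta\right)\de_y\ff,\, iku'\ff}} \leq \tfrac{\nu}{28C_0}\|\de_y\ff\|^2 + \tfrac{7C_0}{\nu}|k|^2\left\|\left(\alpha-\tfrac{\eps_{\alpha,n_c}}{\eps_{\beta,n_c}}\widetilde\lambda_{\nu,k}\beta\right) u'\ff\right\|^2,
\end{equation*}
so that the claim reduces to the single pointwise bound
\begin{equation*}
\left(\alpha(y) - \tfrac{\eps_{\alpha,n_c}}{\eps_{\beta,n_c}}\widetilde\lambda_{\nu,k}\beta(y)\right)^2 \leq \tfrac{c\,\nu}{C_0^2}\beta(y)
\end{equation*}
for a sufficiently small absolute constant $c>0$ (namely $c\leq 1/196$).

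At any fixed $y$ at most two partition elements are nonzero (one interior $\phi_j$ and perhaps one boundary $\widetilde\phi_{b,\ell}$), so I would verify the pointwise inequality piecewise. The interior pieces reproduce the argument of Lemma \ref{lem:err7}: the scaling identity $\widetilde\lambda_{\nu,k}\beta_j = \alpha_j (\nu/|k|)^{2(n_c-j)/((n_c+3)(j+3))}$ shows that the $\widetilde\lambda_{\nu,k}\beta$-correction is small in $\nu|k|^{-1}$ for $j<n_c$, while for $j=n_c$ (which requires $n_0\geq 1$) the interior coefficient cancels exactly and the remaining estimate follows from the identity $\alpha_j^2 = \nu\beta_j$ together with the constraint $\eps_{\alpha,j}^2 \leq \eps_{\beta,j}/196$. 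On a boundary piece the weights carry an additional factor of $y^2$ (or $1-y^2$), but the same scaling identity applied with the boundary index $1$ yields exact cancellation when $n_c=1$ and a small correction when $n_c\geq 2$; squaring the resulting coefficient, using $y^4 \leq y^2$ on the support of $\widetilde\phi_{b,0}$ (since $2\delta \leq 1/4$), and invoking $\alpha_1^2=\nu\beta_1$ and $\eps_{\alpha,1}^2\leq \eps_{\beta,1}/196$ delivers the desired bound. The symmetric case near $y=1$ is identical.

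The main obstacle is the case $n_0=0$ (strictly monotone shear on the channel), where $n_c=1$ is not represented by any interior partition element; the exact cancellation of the $\widetilde\lambda_{\nu,k}\beta$-term then takes place \emph{only} on the boundary piece, while the $\phi_0$-contribution from the interior supplies merely a small correction in $\nu|k|^{-1}$. A secondary bookkeeping point is checking that the new constraint $\eps_{\alpha,1}^2\leq \eps_{\beta,1}/196$ is compatible with the constraints collected in the channel analogues of Lemmas \ref{lem:err1_bdy}--\ref{lem:err6_bdy}; this is ensured by the same ordered parameter choice used in Section \ref{sub:mainproof}, now extended to the boundary index $1$ (fixing the $\eps_{\gamma,j},\eps_{\beta,j},\eps_{\alpha,j}$ first, and then choosing $\kappa_0$ so small that all $(\nu/|k|)^{\text{positive}}$ error factors are subsumed).
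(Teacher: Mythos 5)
Your proof is correct and follows essentially the same route as the paper's: reduce to a pointwise bound via $\eps$-Young, observe that the scaling identity $\widetilde\lambda_{\nu,k}\beta_j = \alpha_j(\nu/|k|)^{2(n_c-j)/((n_c+3)(j+3))}$ yields exact cancellation at $j = n_c$ (and on the boundary when $n_c = 1$) and small corrections otherwise, then close using $\alpha_j^2 = \nu\beta_j$ and the constraint $\eps_{\alpha,1}^2 \leq \eps_{\beta,1}/196$, with the additional weight $y^2$ (resp.\ $1-y^2$) absorbed via $y^4 \leq y^2$ on $[0,1]$. The paper dispatches the case $n_0 \leq 1$ in one line (boundary makes no contribution since $\widetilde\lambda_{\nu,k}\beta_1 = \alpha_1$) and only writes out the $n_0 > 1$ case, whereas you spell out the $n_0 = 0$ situation explicitly — a harmless elaboration that does not change the argument.
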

\begin{proof} 
If $n_0 \leq 1$, this follows exactly as in Lemma \ref{lem:err7}, as the boundary will make no contribution in that case. 
If $n_0 > 1$, then, as above, first note that, for $\nu \abs{k}^{-1}$ sufficiently small, there holds 
\begin{align*}
\abs{\alpha_1 - \frac{\eps_{\alpha,n_c}}{\eps_{\beta,n_c}}\widetilde \lambda_{\nu,k} \beta_1} \leq 2\alpha_1.
\end{align*}
Therefore, 
\begin{align}
\abs{\jap{ \left(\alpha - \frac{\eps_{\alpha,n_c}}{\eps_{\beta,n_c}} \widetilde \lambda_{\nu,k} \beta\right) \de_y \ff,iku'\ff}} \leq \frac{\nu}{14 C_0}\norm{\de_y \ff}^2 + \frac{14 C_0}{\nu} \abs{k}^2\norm{\alpha u' \ff}^2. \label{whatever}
\end{align}
The non-boundary terms and the region where $\phi_b' \neq 0$ can be treated as in Lemma \ref{lem:err7} above and are hence omitted.      
In the region where $\phi_b' = 0$, we have (by symmetry we can just consider the bottom boundary),  
\begin{align*}
\nu^{-1} \abs{k}^2 \alpha(y)^2 = 4\eps_{\alpha,1}^2 \abs{k} y^4 \leq 4 \eps_{\alpha,1}^2\abs{k}^2 \beta_1 y^2,  
\end{align*}
and hence the boundary contributions of the second term in \eqref{whatever} can be bounded by the second term in \eqref{ineq:err7_bdy} provided that $\eps_{\alpha,1}^2 \leq \frac{1}{196}\eps_{\beta,1}$. 
\end{proof}

This completes the treatment of the error estimates on the positive terms in \eqref{eq:bdryder}. 
Indeed, by checking the constraints and collecting the above error estimates as in Section \ref{sub:mainproof}, we  derive  the analogue of \eqref{eq:esti4}: 
\begin{align}\label{eq:esti4bdy}
\ddt \Phi  \leq - \widetilde \eps \,\widetilde{\lambda}_{\nu,k}\Phi. 
\end{align}
From here, the proof of Theorem \ref{thm:channeldecay} proceeds as in Section \ref{sec:L2decay}.

\section{Kuksin measures}\label{sec:kuksin}
As an application of our results, we consider a stochastically forced drift diffusion equation
of the form
\begin{equation}\label{eq:eq1}
\d f + \left(u\de_x f - \nu \Delta f\right) \d t = \nu^{a/2} \, \Psi\, \d W_t, \qquad f(0)=f_0,
\end{equation} 
where $a>0$ is a fixed parameter, and we analyze the behavior of its invariant measure in the limit $\nu\to 0$.
It is convenient to write the forcing term through the standard Fourier basis, namely
\begin{align*}
\Psi \d W_t = \sum_{(k,j) \in \ZZ^2} \psi_{k,j} e_{k,j}\d W^{k,j}_t, \qquad \psi_{k,j} = \overline{\psi_{-k,-j}},\qquad W^{k,j}_t  = W^{-k,-j}_t,
\end{align*} 
where 
\begin{align*}
e_{k,j} = \frac{1}{4 \pi^2} \e^{-ikx - ijy} 
\end{align*}
and $\{W^{k,j}_t\}_{(k,j) \in \ZZ^2}$ are independent Brownian motions.
Assuming
\begin{equation}\label{eq:noisenorm}
\|\Psi\|^2 =\sum_{(k,j) \in \ZZ^2} |\psi_{k,j}|^2 <\infty,
\end{equation}
the Markov semigroup generated by \eqref{eq:eq1} has a unique invariant measure $\mu_\nu$ on $L^2$ 
and, since
the problem is linear, $\mu_\nu=\mathcal{N}(0,Q_\nu)$, a Gaussian centered at 0 with covariance operator given by 
(see \cite{DZ96})
\begin{equation}\label{eq:covar}
Q_\nu = \nu^{a}\int_0^\infty S_\nu(t)\Psi \Psi^\ast S_\nu(t)^\ast \d t. 
\end{equation}
When $a=1$, the behavior of $\mu_\nu$ for $\nu\to 0$ has been analyzed in detail in \cite{BCZGH15}. Precisely, it is proven in \cite{BCZGH15}*{Theorem 4.6} that 
the sequence of invariant measures $\{\mu_\nu\}_{\nu\in(0,1]}$ 
strongly converges (in the sense that the covariance converges in the strong operator topology), as $\nu\to 0$, to a unique Gaussian measure $\mu_0=\mathcal{N}(0,Q_0)$, invariant for the inviscid equation
$$
\de_t f+u\de_x f=0,
$$
with covariance defined as
\begin{equation}\label{eq:cor0}
Q_0 \varphi= \sum_{j \neq 0} \frac{|\psi_{0,j}|^2}{2|j|^2} \l e_{0,j}, \varphi\r e_{0,j}, \qquad \varphi\in L^2.
\end{equation} 
This confirms the intuition that the inviscid invariant measures constructed 
in this manner  mostly retain information about the long-time dynamics of 
the large scales in the solutions, rather than information about the ``enstrophy'' in the small scales. 
It is clear from \eqref{eq:cor0} that $\mu_0$ is only affected by the $x$-independent modes of the noise. In other words,
the only relevant time-scale for $\mu_0$ is the diffusive one, proportional to $1/\nu$, while the small scales created by the 
enhanced dissipation effects caused by the mixing properties of $u\de_x$ are rapidly annihilated by 
the dissipation on time scales faster than the natural $O(\nu^{-1})$. 

\subsection{Anomalous scalings}
The proof of Theorem \cite{BCZGH15}*{Theorem 4.6} relies on rather general spectral properties of the
operator $u\de_x$, and the quantitative estimate of Theorem \ref{thm:maindecay} is not needed to characterize
the Kuksin measure $\mu_0$ for $a = 1$. However, without a precise decay rate
on the frequencies $k\neq0$ of $S_\nu(t)$, the only treatable value of the parameter $a$ is 1, for any shape of noise,
even in the case  $\psi_{0,j}=0$ for all $j\in \ZZ$.
The interesting point made by Theorem \ref{thm:kuksin} is that, in view of \eqref{eq:maindecay}, 
we are able to modify the balance of diffusivity and noise,
as much as the background shear flow permits.

\begin{proof}[Proof of Theorem \ref{thm:kuksin}]
We will prove that $Q_\nu$, the covariance matrix in \eqref{eq:covar} of $\mu_\nu$, converges to zero in the
operator norm.  Since $\psi_{0,j}=0$ for all $j\in \ZZ$, for any $\varphi \in L^2$ we have 
\begin{align*}
Q_{\nu} \varphi & = \nu^{a}\int_0^\infty S_{\nu}(t) \Psi \Psi^\ast S_{\nu}(t)^\ast \varphi\, \d t \\
& = \nu^{a} \int_0^\infty \sum_{(k,j) \in \ZZ^2:k\neq 0}|\psi_{k,j}|^2 \l e_{k,j}, S_{\nu}(t)^\ast \varphi\r S_{\nu}(t) e_{k,j} \d t \\ 
& = \nu^{a} \int_0^\infty \sum_{(k,j) \in \ZZ^2:k\neq 0} |\psi_{k,j}|^2 \l S_{\nu}(t) e_{k,j},\varphi\r   S_{\nu}(t) e_{k,j} \d t \\   
& =\nu^{a} \int_0^\infty \sum_{(k,j) \in \ZZ^2:k\neq 0} |\psi_{k,j}|^2 \l S_{\nu}(t)P_k e_{k,j},\varphi\r   S_{\nu}(t)P_k e_{k,j} \d t.
\end{align*}
Thanks to \eqref{eq:maindecay}, if $\nu<\kappa_0$ we can estimate the above integral directly. Taking  $\|\varphi\|\leq 1$, we have
\begin{align}
\|Q_{\nu} \varphi\| 
& \leq \nu^{a} \sum_{(k,j) \in \ZZ^2:k\neq 0} |\psi_{k,j}|^2  \int_0^\infty \| S_{\nu}(t)P_k\|^2_{L^2\to L^2} \d t \nonumber\\
& \lesssim  \nu^{a} \sum_{(k,j) \in \ZZ^2:k\neq 0} |\psi_{k,j}|^2  \int_0^\infty  \e^{-2\eps\lambda_{\nu,k}t}\d t \nonumber\\
& \lesssim  \nu^{a} \sum_{(k,j) \in \ZZ^2:k\neq 0} \frac{|\psi_{k,j}|^2}{\eps\lambda_{\nu,k}} \label{eq:esti}.
\end{align}
Since
\begin{equation}\label{eq:rate3}
\lambda_{\nu,k}=\frac{\nu^{\frac{n_0+1}{n_0+3}}|k|^{\frac{2}{n_0+3}}}{(1 + \log |k| + \log\nu^{-1})^2},
\end{equation}
by using \eqref{eq:noisenorm} we obtain
\begin{align*}
\|Q_{\nu}\|_{L^2\to L^2} \lesssim \nu^{a-\frac{n_0+1}{n_0+3}}(1+\log\nu^{-1})^2,
\end{align*}
which vanishes as $\nu\to 0$ provided that
$$
a>\frac{n_0+1}{n_0+3}.
$$
This concludes the proof of the theorem.
\end{proof}

\section*{Acknowledgments}
The authors would like to thank the following people for helpful discussions: Thierry Gallay, Gautam Iyer, Vladim{\'{i}}r \v{S}ver{\'a}k, and Vlad Vicol. 
The work of JB was in part supported by a Sloan Research Fellowship and NSF grant DMS-1413177, the work of MCZ was in part supported by an AMS-Simons Travel Award. 

\appendix 

\section{$H^{-1}$ decay estimates for the inviscid problem}\label{app:mixing}

The result here might be known to some experts, but we include a sketch for completeness.  
The following proof uses duality (as in a similar proof in \cite{LinZeng11}), combined with a basic application of the method of stationary phase for classical oscillatory integrals (see e.g. \cite{BigStein}*{Chapter VIII}). 

\begin{theorem} [Mixing by shear flows in $\Torus^2$] \label{thm:MixingShear}
Let $u \in C^{n_0+2}(\T)$ be such that $u^\prime(y) = 0$ in at most finitely 
many places and suppose that at each critical point, $u^\prime$ vanishes to 
at most order $n_0\geq 1$.
Let $g$ solve the PDE 
\begin{align*}
\partial_t g + u \partial_x g = 0. 
\end{align*}
Then, for all $k \neq 0$ we have 
\begin{align}
\norm{P_k g(t)}_{H^{-1}_y} & \lesssim \jap{kt}^{-1/(n_0+1)}\norm{P_k g(0)}_{H^1_y}, \label{ineq:sheardecay}
\end{align}
where $\jap{kt}=\sqrt{1+(kt)^2}$.
\end{theorem}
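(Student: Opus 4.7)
My plan is to reduce the estimate to an oscillatory integral bound in one variable via duality, and then carry out a van der Corput/stationary phase-type argument localized around the critical points of $u$. A Fourier transform in $x$ shows that $\widehat g_k(t,y) = e^{-iktu(y)}\,\widehat g_k(0,y)$, so $\|P_k g(t)\|_{L^2_y}$ is conserved, and by duality
$$
\|P_k g(t)\|_{H^{-1}_y} = \sup_{\|\phi\|_{H^1_y}\leq 1}\left|\int_\T e^{-iktu(y)}\,\widehat g_k(0,y)\,\overline{\phi(y)}\,\d y\right|.
$$
It therefore suffices to prove that for all $h,\phi\in H^1(\T)$,
$$
\left|\int_\T e^{-iktu(y)} h(y)\overline{\phi(y)}\,\d y\right| \lesssim \jap{kt}^{-1/(n_0+1)}\|h\|_{H^1}\|\phi\|_{H^1}.
$$
Since this bound is trivial for $|kt|\leq 1$ by Cauchy--Schwarz and the Sobolev embedding $H^1(\T)\hookrightarrow L^\infty(\T)$, I will restrict attention to $|kt|\geq 1$.

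Fix a parameter $\delta>0$ and write $B_\delta = \bigcup_{i=1}^{N}(\bar y_i-\delta, \bar y_i+\delta)$, splitting the integral into contributions from $B_\delta$ and its complement. On $B_\delta$, I would use $|B_\delta|\lesssim \delta$ together with $H^1(\T)\hookrightarrow L^\infty(\T)$ to obtain immediately
$$
\left|\int_{B_\delta} e^{-iktu}h\overline\phi\,\d y\right| \lesssim \delta\,\|h\|_{H^1}\|\phi\|_{H^1}.
$$
On $\T\setminus B_\delta$, Taylor expansion around each critical point yields the lower bound $|u'(y)|\gtrsim \delta^{n_0}$, so one can use the non-stationary phase identity $e^{-iktu} = (iktu')^{-1}\de_y(-e^{-iktu})$ and integrate by parts once. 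The boundary terms at $\partial B_\delta$ are of size $O((|kt|\delta^{n_0})^{-1})$ after inserting the Sobolev $L^\infty$ bound, while the interior integral involves $\de_y(h\overline\phi/u')$.

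The interior integral decomposes into three pieces. The one containing $u''/(u')^2$ is controlled via the pointwise estimates $|u''(y)|\lesssim |y-\bar y_i|^{n_i-1}$ and $|u'(y)|\gtrsim |y-\bar y_i|^{n_i}$ near each critical point of order $n_i\leq n_0$, which give $\int_{\T\setminus B_\delta} |u''|/(u')^2\,\d y\lesssim \delta^{-n_0}$. The two pieces containing one derivative on $h$ or $\phi$ paired with $1/u'$ are smaller and handled by Cauchy--Schwarz combined with $\|1/u'\|_{L^2(\T\setminus B_\delta)}\lesssim \delta^{(1-2n_0)/2}$. All interior contributions then fit within $C(|kt|\delta^{n_0})^{-1}\|h\|_{H^1}\|\phi\|_{H^1}$. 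Adding everything,
$$
\left|\int_\T e^{-iktu}h\overline\phi\,\d y\right| \lesssim \Big(\delta + \frac{1}{|kt|\,\delta^{n_0}}\Big)\|h\|_{H^1}\|\phi\|_{H^1},
$$
and optimizing via $\delta = |kt|^{-1/(n_0+1)}$ yields the desired decay rate. The main subtlety, and the step I expect to require the most care in the write-up, is the careful bookkeeping of the integrability of the singular factors $1/u'$ and $u''/(u')^2$ outside $B_\delta$; this is precisely what forces the rate to degrade from the non-degenerate rate $|kt|^{-1/2}$ to $|kt|^{-1/(n_0+1)}$.
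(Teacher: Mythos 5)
Your proposal is correct and takes a genuinely different route from the paper's proof, although both begin with the same duality reduction to a one-dimensional oscillatory integral bound. The paper localizes via the smooth partition of unity $\{\phi_j\}$ from Section~\ref{sub:parti} (grouping critical points by their order), invokes the classical stationary-phase lemma (a reading of Stein, op.\ cit.) on each localized piece containing a critical point of order $j$ to obtain the $\jap{kt}^{-1/(j+1)}$ decay, and does a direct non-stationary-phase integration by parts on the $\phi_0$ piece. You instead implement a self-contained van der Corput-type argument: cut the torus at a distance $\delta$ from the critical-point set, use the measure bound and $H^1\hookrightarrow L^\infty$ on the $\delta$-neighborhood, integrate by parts once on the complement with explicit control of the boundary terms and of the singular factors $1/u'$ (in $L^2$) and $u''/(u')^2$ (in $L^1$), and then optimize over $\delta$ to obtain the rate $|kt|^{-1/(n_0+1)}$. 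Your pointwise estimates $|u'(y)|\gtrsim |y-\bar y_i|^{n_i}$ and $|u''(y)|\lesssim |y-\bar y_i|^{n_i-1}$ near a critical point of order $n_i\le n_0$ are justified by $u\in C^{n_0+2}$, the singular integrals $\int_\delta r^{-n_i-1}\,\d r\lesssim \delta^{-n_0}$ and $\int_\delta r^{-2n_i}\,\d r\lesssim \delta^{1-2n_0}$ are exactly as you state (using $n_i\ge 1$), and the optimization $\delta=|kt|^{-1/(n_0+1)}$ is legitimate since $|kt|\ge 1$ forces $\delta\le 1$. The trade-off is that the paper's route is shorter because it outsources the core oscillatory estimate to a black-box lemma, while your argument is entirely elementary and exposes explicitly where the exponent $1/(n_0+1)$ originates — from balancing the $O(\delta)$ contribution near the critical points against the $O(|kt|^{-1}\delta^{-n_0})$ contribution from the worst-order critical point after one integration by parts.
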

\begin{proof}
Denote $P_k g = \ggg_k(t,y)$ and observe that 
\begin{align*}
\ggg_k(t,y) = \e^{-iku(y)t}\ggg_k(0,y). 
\end{align*}
Next, observe that 
\begin{align*}
\norm{\ggg_k(t)}_{H_y^{-1}} = \sup_{\eta \in H_y^1: \norm{\eta}_{H_y^1} = 1} \int_\Torus \ggg_k(t,y) \eta(y)\, \d y.    
\end{align*}
Let $\eta$ be fixed and arbitrary. 
Partition the integral via  
\begin{align*}
\int_\Torus \ggg_k(t,y) \overline{\eta(y)}\, \d y = \sum_j \int_\Torus \e^{-iku(y)t}  \phi_j(y) \ggg_k(0,y)  \overline{\eta(y)}\, \d y = \sum_{j} \mathcal{I}_j,      
\end{align*}
where $\phi_j$ is defined in Section \ref{sub:parti}.
Denote $\psi_j(y) = \phi_j(y) \ggg_k(0,y) \overline{\eta(y)}$. Then, each integral becomes a standard oscillatory integral.  
Bounding $\mathcal{I}_0$ is trivial via integration by parts: 
\begin{align*}
\mathcal{I}_0 & = \int_\Torus \e^{-iku(y)t} \frac{\d}{\d y}\left(\frac{\psi_0(y)}{ikt u'(y)} \right) \d y \lesssim \norm{\psi_0}_{H^1_y}. 
\end{align*} 
Since $H^1_y$ is an algebra in space dimension one,  
\begin{align*}
\norm{\psi_0}_{H^1_y} \lesssim \norm{\ggg_k(0)}_{H^1_y}. 
\end{align*} 
A careful reading of \cite{BigStein}*{Proposition 3, Chapter XIII} 
shows that we may apply the method of stationary phase to deduce 
\begin{align*}
\mathcal{I}_j & \lesssim \jap{kt}^{-1/(j+1)}\left(\norm{\psi_j}_{L^\infty} + \norm{\psi_j}_{H^1_y}\right) \lesssim \jap{kt}^{-1/(j+1)}\norm{\ggg_k(0)}_{H^1_y}.
\end{align*}
The only subtlety is correctly quantifying the loss of regularity -- if one is content with losing more derivatives, then one may apply the proposition directly and even obtain a detailed asymptotic expansion.  
This completes the proof. 
\end{proof}

\bibliographystyle{plain} 
\bibliography{eulereqns,IDnLD}

\end{document}